\newtheorem{assumption}{Assumption}
\newcommand{\cf}{\emph{cf.}\xspace}
\newcommand{\bdmath}{\begin{dmath}}
\newcommand{\edmath}{\end{dmath}}
\newcommand{\beq}{\begin{equation}}
\newcommand{\eeq}{\end{equation}}
\newcommand{\bdm}{\begin{displaymath}}
\newcommand{\edm}{\end{displaymath}}
\newcommand{\bea}{\begin{eqnarray}}
\newcommand{\eea}{\end{eqnarray}}
\newcommand{\beal}{\beq \begin{array}{ll}}
\newcommand{\eeal}{\end{array} \eeq}
\newcommand{\beas}{\begin{eqnarray*}}
\newcommand{\eeas}{\end{eqnarray*}}
\newcommand{\ba}{\begin{array}}
\newcommand{\ea}{\end{array}}
\newcommand{\bit}{\begin{itemize}}
\newcommand{\eit}{\end{itemize}}
\newcommand{\ben}{\begin{enumerate}}
\newcommand{\een}{\end{enumerate}}
\newcommand{\calE}{{\cal E}}
\newcommand{\calI}{{\cal I}}
\newcommand{\calN}{{\cal N}}
\newcommand{\calR}{{\cal R}}
\newcommand{\calS}{{\cal S}}
\newcommand{\calZ}{{\cal Z}}
\newcommand{\eg}{\emph{e.g.,}\xspace}
\newcommand{\ie}{\emph{i.e.,}\xspace}
\newcommand{\hide}[1]{}
\newcommand{\hiddenText}{{\color{gray} hidden text.}}
\newcommand{\hideWithText}[1]{\hiddenText}
\newcommand{\subject}{\text{ subject to }}
\DeclareMathOperator*{\argmin}{arg\,min}
\newcommand{\norm}[1]{\left\| #1 \right\|}
\newcommand{\tran}{^{\mathsf{T}}}
\newcommand{\trace}[1]{\mathrm{tr}\left(#1\right)}
\newcommand{\inv}{^{-1}}
\newcommand{\zero}{{\mathbf 0}}
\newcommand{\eye}{{\mathbf I}}
\newcommand{\Real}[1]{ { {\mathbb R}^{#1} } }
\newcommand{\SEthree}{\ensuremath{\mathrm{SE}(3)}\xspace}
\newcommand{\SOthree}{\ensuremath{\mathrm{SO}(3)}\xspace}
\newcommand{\scenario}[1]{{\smaller \sf#1}\xspace}
\newcommand{\inprod}[2]{\left\langle #1, #2 \right\rangle}
\newcommand{\bmat}{\left[ \begin{array}}
    \newcommand{\emat}{\end{array}\right]}
\newcommand{\blue}[1]{{\color{blue}#1}}
\newcommand{\linkToPdf}[1]{\href{#1}{\blue{(pdf)}}}
\newcommand{\linkToPpt}[1]{\href{#1}{\blue{(ppt)}}}
\newcommand{\linkToCode}[1]{\href{#1}{\blue{(code)}}}
\newcommand{\linkToWeb}[1]{\href{#1}{\blue{(web)}}}
\newcommand{\linkToVideo}[1]{\href{#1}{\blue{(video)}}}
\newcommand{\linkToMedia}[1]{\href{#1}{\blue{(media)}}}
\newcommand{\award}[1]{\xspace} 
\newcommand{\bbZ}{\mathbb{Z}}
\newcommand{\mle}{\mathrm{MLE}}
\newcommand{\tldtheta}{\tilde{\theta}}
\newcommand{\cbrace}[1]{\left\{ #1 \right\}}
\newcommand{\poly}[1]{\mathbb{R}[#1]}
\newcommand{\sym}[1]{\mathbb{S}^{#1}}
\newcommand{\pd}[1]{\sym{#1}_{++}}
\newcommand{\sos}[1]{\mathrm{SOS}[#1]}
\renewcommand{\deg}[1]{\mathrm{deg}(#1)}
\newcommand{\vol}[1]{\mathrm{Vol}(#1)}
\newcommand{\gray}[1]{{\color{gray}#1}}
\newcommand{\Rgt}{R_{\mathrm{gt}}}
\newcommand{\tgt}{t_{\mathrm{gt}}}
\newcommand{\lmo}{\scenario{LM-O}}
\newcommand{\mvn}{\scenario{MVN}}
\newcommand{\damping}{\mathfrak{b}}
\newcommand{\gravity}{\mathfrak{g}}
\newcommand{\mass}{\mathfrak{m}}
\newcommand{\length}{\mathfrak{l}}
\title[]{Uncertainty Quantification of Set-Membership Estimation in \\ Control and Perception: Revisiting the Minimum Enclosing Ellipsoid}
\author{%
 \Name{Yukai Tang} \Email{yt3846@princeton.edu}\\
 \addr Department of Operations Research and Financial Engineering, Princeton University, USA
 \AND
 \Name{Jean-Bernard Lasserre} \Email{lasserre@laas.fr}\\
 \addr LAAS-CNRS and Toulouse School of Economics (TSE), Toulouse, France
 \AND
 \Name{Heng Yang} \Email{hankyang@seas.harvard.edu}\\
 \addr School of Engineering and Applied Sciences, Harvard University, USA%
}
\begin{document}

\maketitle
\vspace{-8mm}

\begin{abstract}%
    \emph{Set-membership estimation} (SME) outputs a set estimator that guarantees to cover the groundtruth. Such sets are, however, defined by (many) abstract (and potentially nonconvex) constraints and therefore difficult to manipulate. We present tractable algorithms to compute simple and tight over-approximations of SME in the form of \emph{minimum enclosing ellipsoids} (MEE). We first introduce the \emph{hierarchy of enclosing ellipsoids} proposed by \cite{nie05jgo-minimum}, based on \emph{sums-of-squares relaxations}, that asymptotically converge to the MEE of a basic semialgebraic set.
    This framework, however, struggles in modern control and perception problems due to computational challenges.
    We contribute three computational enhancements to make this framework practical, namely constraints pruning, generalized relaxed Chebyshev center, and handling non-Euclidean geometry. We showcase numerical examples on system identification and object pose estimation.

\end{abstract}

\begin{keywords}%
    Set-Membership Estimation, Minimum Enclosing Ellipsoid, Semidefinite Relaxations
\end{keywords}
   
\vspace{-4mm}
\section{Introduction}
\label{sec:intro}\vspace{-2mm}
Model estimation and learning from measurements is a central task in numerous disciplines~\citep{stengel94book-optimal,barfoot17book-state}. Let $\theta \in \Theta \subseteq \Real{n}$ be an unknown model, and $\bbZ = \{z_i\}_{i=1}^N \in \calZ^N$ be a set of $N$ measurements such that, if $z_i$ is a perfect (noise-free) measurement, it holds 
$
r(\theta,z_i) = 0, i=1,\dots,N,
$
with $r: \Theta \times \calZ \rightarrow \Real{m}$ a given \emph{residual} function that is typically designed from first principles describing how $z_i$ is generated from $\theta$ (see Examples~\ref{ex:sme-system-id}-\ref{ex:sme-object-pose} below). 

{\bf Maximum Likelihood Estimation}. The most popular approach to estimate $\theta$ from $\bbZ$ is \emph{maximum likelihood estimation} (MLE). When the measurements are noisy, the residual is adjusted to \vspace{-6mm}
\bea 
\label{eq:noise-distribution}
r(\theta,z_i) = \epsilon_i, \quad i=1,\dots,N,
\eea
with $\epsilon_i \in \Real{m}$ some small noise. The most common distributional assumption is that $\epsilon_i \sim \calN(0,\Sigma)$ follows a Gaussian distribution, leading to the MLE estimator that is the solution of a \emph{nonlinear least squares} problem~\citep{nocedal99book-numerical,pineda22neurips-theseus}\footnote{Or maximum \emph{a posteriori} estimation (MAP) if there is prior knowledge about the distribution of $\theta$.}\vspace{-4mm}
\bea \vspace{-4mm}
\label{eq:mle-least-squares}
\theta^\star_{\mle} \in \argmin_{\theta \in \Theta} \sum_{i=1}^N \Vert r(\theta,z_i) \Vert_{\Sigma\inv}^2.
\eea 
When the measurement set $\bbZ$ contains \emph{outliers}, the least squares objective in \eqref{eq:mle-least-squares} can be replaced by a \emph{robust loss}~\citep{huber04book-robust,antonante21tro-outlier}. Despite significant interests and progress in the literature, two shortcomings of the MLE framework exist. On one hand, the Gaussian assumption is questionable. In fact, in Appendix \ref{app:sec:non-Gaussian} we show noises generated by modern neural networks on a computer vision example deviate far from a Gaussian distribution.\footnote{One can replace the Gaussian assumption with more sophisticated distributional assumptions, but the resulting optimization often becomes more difficult to solve.} On the other hand, in safety-critical applications, \emph{provably correct uncertainty quantification} of a given estimator is desired (\eg how close is $\theta^\star_\mle$ to the groundtruth). The uncertainty of the MLE estimator \eqref{eq:mle-least-squares}, however, is nontrivial to quantify due to the potential nonlinearity in the residual function $r$.

{\bf Set-Membership Estimation}. An alternative framework, known as \emph{set-membership estimation} (SME, or unknown-but-bounded estimation)~\citep{milanese91automatica-optimal}, seeks to resolve the two shortcomings of MLE. Instead of making a distributional assumption on the measurement noise~\eqref{eq:noise-distribution}, SME only requires the noise to be bounded
\bea
\label{eq:sme-noise-bound}
\Vert \epsilon_i \Vert = \Vert r(\theta,z_i) \Vert \leq \beta_i,\quad i=1,\dots,N,
\eea 
where $\Vert \cdot \Vert$ indicates the $\ell_2$ vector norm (one can choose $\ell_1$ or $\ell_\infty$ norm and our algorithm would still apply). Characterizing the bound of the noise is easier than characterizing the distribution of the noise and can be conveniently done using, \eg conformal prediction with a calibration dataset~\citep{angelopoulos21arxiv-conformal}. Given \eqref{eq:sme-noise-bound}, SME returns a \emph{set estimation} of the model
\begin{equation}
\label{eq:sme-set}
\calS = \left\{ \theta \in \Theta \mid \Vert r(\theta,z_i) \Vert \leq \beta_i, i=1,\dots,N  \right\}, \tag{SME}
\end{equation} 
\ie $\calS$ contains all models \emph{compatible} with the measurements $\bbZ$ under assumption \eqref{eq:sme-noise-bound}. Clearly, the groundtruth must belong to $\calS$, and the ``size'' of $\calS$ informs the uncertainty of the estimated model.

{\bf Challenges}. It is almost trivial to write down the set $\calS$ as in \eqref{eq:sme-set}, which, however, turns into a highly nontrivial object to manipulate. The reasons are (a) each of the constraints ``$\Vert r(\theta,z_i) \Vert\leq \beta_i$'' may be a nonconvex constraint, and/or (b) the number of constraints $N$ may be very large. We use two examples in control and perception, respectively, to illustrate the challenges.

\begin{example}[System Identification~\citep{kosut92tac-set,li23arxiv-learning}]
    \label{ex:sme-system-id}
    Consider a discrete-time dynamical system with state $x \in \Real{n_x}$, control $u \in \Real{n_u}$, and unknown system parameters $\theta \in \Real{n}$\vspace{-2mm}
    \bea \label{eq:dynamical-system}
    x_{k+1} = \phi_0(x_k, u_k) + \sum_{i=1}^n \theta_i \phi_i(x_k, u_k) + \epsilon_k = \Phi(x_k,u_k) \tldtheta + \epsilon_k, \quad k=0,\dots,N-1,
    \eea
    where $\Phi(x_k,u_k) = [\phi_0(x_k,u_k),\dots,\phi_n(x_k,u_k)] \in \Real{n_x \times (n+1)}$ is a (nonlinear) activation function, $\epsilon_k\in\Real{n_x},k=0,\dots,N-1$ are unknown noise vectors assumed to satisfy \eqref{eq:sme-noise-bound},\footnote{Note that our algorithm can allow $\theta$ to appear nonlinearly in the dynamics, but it is sufficient to restrict to dynamics linear in $\theta$ as in \eqref{eq:dynamical-system} for system identification tasks in many real applications.} and $\tldtheta = [1,\theta\tran]\tran$. Given a system trajectory $\{x_0,u_0,\dots,x_{N-1},u_{N-1},x_N \}$, the SME of the parameters $\theta$ is 
    \bea \label{eq:sme-system-id}
    \calS = \left\{ \theta \in \Real{n} \mid \Vert x_{k+1} - \Phi(x_k,u_k) \tldtheta \Vert \leq \beta_k,k=0,\dots,N-1\right\}.
    \eea 
\end{example}

\begin{example}[Object Pose Estimation~\citep{hartley03book-multiple,yang2023object}]\label{ex:sme-object-pose}
Consider a 3D point cloud $\{Y_i \}_{i=1}^N$ and a camera, at an unknown rotation and translation $\theta = (R,t) \in \SEthree$,\footnote{$\SEthree := \SOthree \times \Real{3}$ with $\SOthree := \{ R \in \Real{3\times 3} \mid R\tran R = R R\tran =\eye, \det{R} = +1 \}$.}  observing the point cloud as a set of 2D image keypoints
    \bea \label{eq:camera-measurement-model}
    y_i = \Pi(R Y_i + t) + \epsilon_i, \quad i=1,\dots,N,
    \eea 
    where $\Pi: \Real{3} \rightarrow \Real{2}, \Pi(v) = [v_1/v_3,v_2/v_3]\tran$ denotes the projection of a 3D point onto the 2D image plane and $\epsilon_i \in \Real{2}$ denotes measurement noise satisfying \eqref{eq:sme-noise-bound}. Given pairs of 3D-2D correspondences $\{y_1,Y_1,\dots,y_N, Y_N \}$, the SME of the camera pose $\theta$ is 
    \bea \label{eq:sme-object-pose}
    \calS = \left\{ \theta \in \SEthree \mid \Vert y_i - \Pi(R Y_i + t) \Vert \leq \beta_i,i=1,\dots,N\right\}.
    \eea  
\end{example}

The SME \eqref{eq:sme-system-id} is convex --defined by quadratic inequality constraints-- but $N$ can be large given a long trajectory. The SME \eqref{eq:sme-object-pose}, shown by~\cite[Proposition 3]{yang2023object} to be defined by $N$ quadratic inequalities and $N$ linear inequalities, is unfortunately nonconvex despite that $N<10$.

{\bf Contributions}. 
We propose tractable algorithms based on semidefinite programming (SDP) to simplify the set-membership estimator \eqref{eq:sme-set} while maintaining tight uncertainty quantification. Specifically, we seek to find the \emph{minimum enclosing ellipsoid} (MEE) of \eqref{eq:sme-set}, \ie the ellipsoid that contains $\calS$ with minimum volume. Such an ellipsoid allows us to (i) use the center of the ellipsoid as a point estimator, (ii) provide a (minimum) worst-case error bound for the point estimator, and (iii) generate samples in $\calS$ via straightforward rejection sampling (\ie sample inside the ellipsoid and accept the sample if inside $\calS$).\footnote{Clearly, this would also allow us to obtain an estimate of the volume of $\calS$ by counting the acceptance rate.} Our algorithms are based on the \emph{sums-of-squares} (SOS) relaxation framework proposed in \cite{nie05jgo-minimum}, revisited in \cite{kojima13mp-enclosing}, for computing \emph{a hierarchy of enclosing ellipsoids} that \emph{asymptotically} converge to the MEE of a \emph{basic semialgebraic set}, \ie a set defined by finitely many polynomial (in-)equalities, to which both \eqref{eq:sme-system-id} and \eqref{eq:sme-object-pose} belong.\footnote{Surprisingly, the SOS-based MEE framework seems unexploited in the context of Examples \ref{ex:sme-system-id}-\ref{ex:sme-object-pose}.}
When the SME is convex, together with the celebrated \emph{L{\"o}wner-John's ellipsoid theorem}~\citep{henk12-lowner}, we give an algorithm that can provide a \emph{certificate} of convergence when the MEE has been attained by the SOS hierarchy. We show this vanilla algorithm already outperforms the confidence set of least-squares estimation~\citep{abbasi11colt-regret} in a simple instance of Example~\ref{ex:sme-system-id}.
Unfortunately, applying this algorithm to Example \ref{ex:sme-system-id} with large $N$ and Example \ref{ex:sme-object-pose} encounters three challenges. (C1) A long system trajectory (\eg $N=1000$) in Example~\ref{ex:sme-system-id} renders high-order SOS relaxations overly expensive. We therefore introduce a \emph{preprocessing} algorithm to prune redundant constraints in the set \eqref{eq:sme-system-id} (\eg over $900$ constraints are deemed redundant). (C2) We empirically found for \eqref{eq:sme-set} sets that are nonconvex or defined by many constraints, the SDP solver would encounter serious numerical issues and simply fail. To circumvent this, we propose a two-step approach, where step one draws random samples from \eqref{eq:sme-set} to approximate the \emph{shape matrix} of the enclosing ellipsoid, and step two minimizes the size of the ellipsoid with a fixed shape. The second step, coincidentally, becomes a strict generalization of the \emph{relaxed Chebyshev center} method proposed in \cite{eldar2008minimax}. (C3) The last challenge is to handle the non-Euclidean geometry when enclosing an \eqref{eq:sme-set} of $\SOthree$. By leveraging the special geometry of \emph{unit quaternions}, we show a reduction of computing the MEE on a Riemannian manifold to computing the MEE in Euclidean space. With these computational enhancements, we conduct experiments on system identification (including those that are hard to learn~\citep{tsiamis2021linear}) and object pose estimation that were not possible to perform in existing literatures.

{\bf Paper Organization}. We defer a review of related works to Appendix \ref{sec:related-work} and first present the SOS-based MEE framework in Section \ref{sec:mee-sos}. We describe in Section~\ref{sec:improvement} our computational enhancements. We present numerical experiments in Section~\ref{sec:experiments} and conclude in Section~\ref{sec:conclusion}. Preliminaries on the moment-SOS hierarchy~\citep{lasserre2001global,parrilo03mp-semidefinite} are presented in Appendix~\ref{app:sec:moment-sos-pop}. 


\section{Minimum Enclosing Ellipsoid by SOS Relaxations}
\label{sec:mee-sos}

Consider a basic semialgebraic set defined by a finite number of polynomial constraints
\bea\label{eq:basic-semialgebraic-set}
\calS = \cbrace{\theta \in \Real{n} \mid g_i(\theta) \geq 0, i=1,\dots,l_g, h_j(\theta) = 0, j=1,\dots,l_h}
\eea 
with $g_i,h_j \in \poly{\theta}$ real polynomials in $\theta$. We use the same notation $\calS$ here as in \eqref{eq:sme-set}, \eqref{eq:sme-system-id}, \eqref{eq:sme-object-pose} because the membership sets we consider are all basic semialgebraic sets.  Let $\xi = (\theta_{i_1},\dots,\theta_{i_d}) = P\theta$ be a $d$-dimensional subvector of $\theta$ (with $P$ a selection matrix), $\calS_\xi := \{ P \theta \mid \theta \in \calS \}$ be the restriction of $\calS$ on $\xi$, and consider a $d$-dimensional ellipsoid
\bea\label{eq:ellipsoid}
\calE = \{ \xi \in \Real{d} \mid  1 - (\xi - \mu)^T E (\xi - \mu) \geq 0 \},
\eea
with $\mu \in \Real{d}$ the center and $E \in \pd{d}$ the shape matrix ($E$ is positive definite). We want to find the ellipsoid $\calE$ with minimum volume that encloses $\calS_\xi$
\begin{equation}\label{eq:mee}
    \max_{\mu \in \Real{d},E \in \pd{d}} \cbrace{ \log\det E \mid \xi \in \calE, \forall  \xi \in \calS_\xi}, \tag{MEE}
\end{equation} 
where $\det E$ is inversely proportional to the volume of $\calE$ (\ie \eqref{eq:mee} minimizes the volume of $\calE$). We remark that considering an ellipsoid in the subvector $\xi$ is general and convenient as (i) $P = I$ recovers the $n$-dimensional ellipsoid, (ii) in Example \ref{ex:sme-object-pose} it is desired to enclose the rotation $R$ and translation $t$ separately {because $R$ lives in $\SOthree$ and $t$ lives in $\Real{3}$}, and (iii) having $\xi = \theta_i \in \Real{}$ for some dimension $i$ allows the ellipsoid $\calE_i$ to be a line segment that encloses $\theta_i$ and hence $\calE_1 \times \dots \times \calE_n$ forms an enclosing box of $\calS$.

Problem \eqref{eq:mee} is generally intractable even when $\calS$ is convex.\footnote{When $\calS$ is a set of points, then \eqref{eq:mee} is easy to solve~\citep{gartner99esa-fast,magnani05cdc-tractable,moshtagh05copt-minimum}.} However, denoting $e(\xi) := 1 - (\xi - \mu)^T E (\xi - \mu)$ in \eqref{eq:ellipsoid} as the polynomial in $\xi$, we observe the constraint in \eqref{eq:mee} simply asks $e(\xi)$ to be \emph{nonnegative on the set $\calS$}. This observation allows a hierarchy of convex relaxations of \eqref{eq:mee} based on sums-of-squares (SOS) programming~\citep{lasserre2001global,blekherman12siam-semidefinite}.

\begin{theorem}[MEE Approximation by SOS Programming]
    \label{thm:mee-sos}
    Assume $\calS$ is Archimedean,\footnote{The definition of Archimedeanness is given in \cite[Def. 3.137]{blekherman12siam-semidefinite}. One can make the Archimedean condition trivially hold by adding a constraint $M_\theta - \theta\tran \theta \geq 0$ to \eqref{eq:basic-semialgebraic-set}, which is easy for Examples \ref{ex:sme-system-id}-\ref{ex:sme-object-pose}.} consider the SOS program with an integer $\kappa$ such that $2\kappa \geq \max\{2, \{\deg{g_i}\}_{i=1}^{l_g}, \{\deg{h_j}\}_{j=1}^{l_h} \}$
    \begin{subequations}\label{eq:mee-sos}
        \bea 
    \max_{E,b,c,\sigma_i,\lambda_j} & \log\det E \label{eq:mee-sos-max-logdet} \\
    \subject & 1 - (\xi\tran E \xi + 2 b\tran \xi + c) = \sum_{i=0}^{l_g} \sigma_i(\theta) g_i(\theta) + \sum_{j=1}^{l_h} \lambda_j(\theta) h_j(\theta) \label{eq:mee-sos-equality} \\
     & \sigma_i(\theta) \in \sos{\theta}, \ \ \deg{\sigma_i g_i} \leq 2\kappa, \quad i=0,\dots,l_g \label{eq:mee-sos-sos-multiplier} \\
     & \lambda_j(\theta) \in \poly{\theta}, \ \ \deg{\lambda_j h_j} \leq 2\kappa, \quad j=1,\dots,l_h \label{eq:mee-sos-poly-multiplier} \\
     & \begin{bmatrix}
        E & b \\ b\tran & c 
     \end{bmatrix} \succeq 0 \label{eq:mee-sos-lifting}
        \eea
    \end{subequations}
    where $g_0(\theta) := 1$, $\sos{\theta}$ is the set of SOS polynomials in $\theta$, and $\deg{\cdot}$ denotes the degree of a polynomial. Let $(E_\star, b_\star, c_\star)$ be an optimal solution of \eqref{eq:mee-sos}, then,
    \begin{enumerate}[label = (\roman*)]
        \item for any $\kappa$, we have $\xi\tran E_\star \xi + 2 b_\star\tran \xi + c_\star = (\xi - \mu_\star)\tran E_\star (\xi - \mu_\star)$ with $\mu_\star = -E_\star^{-1}b_\star$, and the ellipsoid $\calE_\kappa = \{\xi \in \Real{d} \mid (\xi - \mu_\star)\tran E_\star (\xi - \mu_\star) \leq 1\}$ encloses $\calS_\xi$;
        
        \item $\vol{\calE_\kappa}$ decreases as $\kappa$ increases, and $\calE_\kappa$ tends to the solution of \eqref{eq:mee} as $\kappa \rightarrow \infty$.
    \end{enumerate}
\end{theorem}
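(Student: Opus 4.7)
I would split the argument into the algebraic content of (i) and the approximation content of (ii), handling them respectively by a completion-of-squares plus rescaling argument and by Putinar's Positivstellensatz.

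For (i), the first step is to observe that the LMI \eqref{eq:mee-sos-lifting} is exactly the Schur-complement condition $c - b\tran E\inv b \geq 0$ whenever $E \succ 0$, and $E \succ 0$ must hold at any optimum since otherwise $\log\det E = -\infty$. Completing the square with $\mu_\star := -E_\star\inv b_\star$ yields
\[
\xi\tran E_\star \xi + 2 b_\star\tran \xi + c_\star = (\xi - \mu_\star)\tran E_\star (\xi - \mu_\star) + \alpha_\star,
\qquad \alpha_\star := c_\star - b_\star\tran E_\star\inv b_\star \geq 0.
\]
The identity claimed in (i) amounts to showing $\alpha_\star = 0$, which I would establish by a rescaling argument: given any feasible tuple with $\alpha > 0$, define $(E', b', c') := (E/(1-\alpha),\, b/(1-\alpha),\, b\tran E\inv b /(1-\alpha))$ together with rescaled multipliers $\sigma_i' := \sigma_i/(1-\alpha)$ and $\lambda_j' := \lambda_j/(1-\alpha)$. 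A direct calculation shows that these still satisfy \eqref{eq:mee-sos-equality}, the degree bounds \eqref{eq:mee-sos-sos-multiplier}--\eqref{eq:mee-sos-poly-multiplier} are unchanged, SOSness of each $\sigma_i'$ is preserved since $1-\alpha > 0$ (using that $\calS_\xi$ has nonempty interior), and the rescaled block has zero Schur complement, hence is PSD. The objective then strictly improves by $-d\log(1-\alpha) > 0$, so $\alpha_\star = 0$ at the optimum. Enclosure $\calS_\xi \subseteq \calE_\kappa$ then follows by evaluating \eqref{eq:mee-sos-equality} at any $\theta \in \calS$: the right-hand side is nonnegative since each $\sigma_i(\theta) g_i(\theta) \geq 0$ and $h_j(\theta) = 0$, forcing $(\xi - \mu_\star)\tran E_\star (\xi - \mu_\star) \leq 1$.

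For (ii), monotonicity of $\vol{\calE_\kappa}$ in $\kappa$ is immediate: any tuple feasible at level $\kappa$ remains feasible at level $\kappa+1$ by padding multipliers with zeros, so $\log\det E_\star$ is non-decreasing in $\kappa$. For asymptotic convergence, let $(E^*, \mu^*)$ realize \eqref{eq:mee} and, for any $\epsilon \in (0,1)$, consider the slightly shrunk shape $\tilde E := E^*/(1+\epsilon)$. On $\calS_\xi$ one has $(\xi-\mu^*)\tran \tilde E (\xi-\mu^*) \leq 1/(1+\epsilon) < 1$, so $1 - (\xi - \mu^*)\tran \tilde E (\xi - \mu^*)$ is \emph{strictly} positive on $\calS$. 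Putinar's Positivstellensatz (valid under the Archimedean hypothesis) then provides a representation in the truncated quadratic module at some finite degree $2\kappa_\epsilon$, making $(\tilde E,\, -\tilde E\mu^*,\, (\mu^*)\tran \tilde E \mu^*)$ feasible for \eqref{eq:mee-sos} at every $\kappa \geq \kappa_\epsilon$. Combined with the a-priori bound $\log\det E_\kappa \leq \log\det E^*$ (each $\calE_\kappa$ encloses $\calS_\xi$, so its volume cannot be smaller than that of the MEE) and uniqueness of the MEE of $\conv(\calS_\xi)$ (strict concavity of $\log\det$ on $\pd{d}$), letting $\epsilon \to 0$ gives $\log\det E_\kappa \to \log\det E^*$ and hence $(E_\kappa, \mu_\kappa) \to (E^*, \mu^*)$.

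The main obstacle is the rescaling step in (i): one must simultaneously verify that the rescaled multipliers preserve the degree truncation, that $\sigma_i/(1-\alpha)$ stays SOS (so $1 - \alpha > 0$ is genuinely needed, which requires the mild nondegeneracy condition that $\calS_\xi$ have nonempty interior, with a limit argument otherwise), and that the rescaled LMI block is PSD with the Schur complement saturated. A secondary subtlety concerns the Archimedean hypothesis for nonconvex cases like \eqref{eq:sme-object-pose}, where one must explicitly adjoin a ball constraint (as the footnote to the theorem suggests) before Putinar's theorem can be legitimately invoked in (ii).
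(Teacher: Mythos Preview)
Your proposal is correct and follows essentially the same approach as the paper: for (i), both you and the paper use a rescaling argument within the equivalence class $\{(mE, mb, mc - m + 1): m>0\}$ to show the Schur complement $c - b\tran E\inv b$ must vanish at the optimum (the paper takes $m>1$ small to strictly improve, while you jump directly to the saturating choice $m = 1/(1-\alpha)$); for (ii), both invoke Putinar's Positivstellensatz on a slightly shrunk ellipsoid (the paper uses $E^\star - \varepsilon I$, you use $E^\star/(1+\varepsilon)$) to obtain feasibility at finite degree and then pass to the limit. Your treatment is marginally more careful in flagging the nondegeneracy needed for $1-\alpha>0$ and in noting convergence of the minimizers (not just the optimal value), but the architecture of the argument is the same.
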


{Although it seems that Equation~\eqref{eq:mee-sos-equality} is hard to satisfy because the LHS only contains a subset of variables comparing to the RHS, we claim that it is not the case. This is because the inequalities $\{g_i\}$ and equalities $\{h_j\}$ contain all variables. Thus, monomial cancellation is easy to happen.} The proof of Theorem~\ref{thm:mee-sos} is given in Appendix~\ref{app:sec:proof-mee-sos} and is inspired by \cite{nie05jgo-minimum,magnani05cdc-tractable}. Problem~\eqref{eq:mee-sos} is convex and can be readily implemented by YALMIP~\citep{lofberg04yalmip} and solved by MOSEK~\citep{aps19mosek}. Its intuition is simple: \eqref{eq:mee-sos-equality}-\eqref{eq:mee-sos-poly-multiplier} ensures $1 - (\xi\tran E \xi + 2 b\tran \xi + c)$ is nonnegative on $\calS$ and hence every feasible solution is an enclosing ellipsoid, \eqref{eq:mee-sos-lifting} uses a lifting technique to convexify the bilinearity of $\mu$ and $E$ in the original ellipsoid parametrization \eqref{eq:ellipsoid}, and the objective \eqref{eq:mee-sos-max-logdet} seeks to minimize the volume. The convergence of the hierarchy follows from Putinar's Positivstellensatz~\citep{putinar1993positive}.

{\bf Certifying Convergence}. Unlike applying SOS relaxations to polynomial optimization, where a certificate of convergence is known~\citep{henrion05-detecting}, detecting the convergence of \eqref{eq:mee-sos} is in general difficult~\citep{lasserre15mp-generalization}. However, when the set $\calS$ is convex, it is possible to leverage L{\"o}wner-John's ellipsoid theorem~\citep{xie2016thesis} to derive a simple convergence certificate. We present such a result in Appendix~\ref{app:sec:proof-certificate-mee} and a numerical example in Appendix~\ref{app:sec:experiments}.


\vspace{-5mm}
\section{Computational Enhancement}
\label{sec:improvement}

The SOS-based algorithm~\eqref{eq:mee-sos} works very well on simple examples. In this section, we describe three challenges of applying \eqref{eq:mee-sos} and present three techniques to enhance its performance.

\vspace{-5mm}
\subsection{Pruning Redundant Constraints}
\label{sec:prune-redundant}

The first challenge arises when the number of inequalities $l_g$ is very large, in which case the convex optimization \eqref{eq:mee-sos} has $l_g+1$ positive semidefinite (PSD) variables (whose sizes grow quickly with the relaxation order $\kappa$). In system identification (Example~\ref{ex:sme-system-id}), a large $l_g = N$ is common as practitioners often collect a long system trajectory to accurately identify the system. 

Nevertheless, since the dimension $n$ of the parameter $\theta$ is usually much smaller than the trajectory length $N$, one would expect many constraints in \eqref{eq:sme-system-id} to be \emph{redundant}. The difficultly lies in how to \emph{identify and prune} the redundant constraints. When all the constraints are linear, algorithms from linear programming can identify redundancy~\citep{caron89jota-degenerate,telgen83ms-identifying,paulraj10mpe-comparative,cotorruelo20arxiv-elimination}. To handle the quadratic constraints in \eqref{eq:sme-system-id}, we propose Algorithm~\ref{alg:prune-redundant-constraints}.

\vspace{-4mm}
\setlength{\textfloatsep}{0pt}%
\begin{algorithm}
    \SetAlgoLined
    \caption{Prune Redundant Constraints in \eqref{eq:sme-system-id} \label{alg:prune-redundant-constraints}}

    \textbf{Input:} Constraint set $\{g_k(\theta) \}_{k \in [N-1]}$ with $g_k(\theta) := \beta_k^2 - \Vert x_{k+1} - \Phi(x_k,u_k)\tldtheta \Vert^2$ (\cf \eqref{eq:sme-system-id}) 

    {\bf Output:} Pruned set of constraints $\{g_k(\theta) \}_{k \in \calI }$

    Initialize $\calI = [N-1]$

    \For{ $k \gets 0$ to $N-1$ }{

        Solve $\underline{g}_k \leq g_k^\star = \min_{\theta} \cbrace{ g_k(\theta) \mid g_i(\theta) \geq 0, i \in \calI\backslash \{k\} }$ \hfill $\triangleleft$ {\smaller \gray{First-order moment-SOS hierarchy}} \label{line:alg-solve-pop}

        \lIf{ $\underline{g}_k^\star \geq 0$ }{
            $\calI \leftarrow \calI \backslash \{k\}$ 
        }
    }

    \textbf{ Return:} $\calI$
\end{algorithm}
\vspace{-5mm}

Intuitively, line \ref{line:alg-solve-pop} of Algorithm \ref{alg:prune-redundant-constraints} seeks to minimize $g_{k}(\theta)$ when the rest of the constraints hold. If $g_k^\star \geq 0$, then $g_k(\theta)$ is redundant as it is implied by the rest of the constraints. The optimization in line \ref{line:alg-solve-pop}, however, is nonconvex because $g_k(\theta)$ is a concave polynomial. Therefore, we use the first-order moment-SOS hierarchy to obtain a lower bound $\underline{g}_k \leq g_k^\star$: if $\underline{g}_k \geq 0$, then $g^\star_k \geq 0$ must hold and $g_k(\theta)$ is deemed redundant. In practice, the first-order moment-SOS hierarchy is very efficient and easily scales to $N=1000$. 
In a pendulum system identification experiment presented in Section~\ref{sec:exp:system-id:pendulum-long-trajectory}, we show Algorithm~\ref{alg:prune-redundant-constraints} effectively prunes over $90\%$ constraints.

\vspace{-3mm}
\subsection{Generalized Relaxed Chebyshev Center}
\label{sec:general-rcc}

The second challenge comes from the ``$\log\det E$'' objective in \eqref{eq:mee-sos}. Maximizing the ``$\log\det$'' of $E$ is convex~\citep{vandenberghe98simaa-determinant}, yet it cannot be written as a standard linear SDP.\footnote{SDPT3~\citep{toh99oms-sdpt3} natively supports $\log\det$ but empirically we found it performs worse than MOSEK.} Consequently, it is often replaced by the geometric mean~\citep{lofberg04yalmip} and modelled with an exponential cone constraint~\citep{aps19mosek}, causing serious numerical issues in our experiments.\footnote{Particularly, MOSEK returns a solution with large duality gap.} Even after pruning redundant constraints using Algorithm~\ref{alg:prune-redundant-constraints}, we found problem~\eqref{eq:mee-sos} still difficult to solve. Similar observations have been reported in~\cite[page 950]{lasserre23crm-pell}.

This motivates solving the following \emph{generalized Chebyshev center} (GCC) problem 
\begin{equation}\label{eq:gcc}
    \eta^\star = \min_{\mu \in \Real{d}} \max_{\xi \in \calS_\xi} \quad  \Vert \mu - \xi \Vert_Q^2 \tag{GCC}
\end{equation}
with a given $Q \succ 0$ (recall $\xi = P \theta$ is a subvector of $\theta$). When $Q = I$, \eqref{eq:gcc} reduces to the usual Chebyshev center problem~\citep{milanese91automatica-optimal,eldar2008minimax}. In \eqref{eq:gcc}, given any $\mu$, the inner ``$\max_\xi$'' computes the maximum $Q$-weighted distance from $\mu$ to the set $\calS_\xi$, denoted as $\eta^\star_\mu$. Therefore, the ellipsoid $\calE_{Q,\mu} = \{ \xi \mid \Vert \xi - \mu \Vert_Q^2 \leq \eta^\star_\mu  \}$ must enclose the set $\calS_\xi$. Via the outer ``$\min_\mu$'', \eqref{eq:gcc} seeks the $\mu$ such that $\calE_{Q,\mu}$ is the smallest enclosing ellipsoid with a given shape matrix $Q$. We found \eqref{eq:gcc} to be much easier to solve than \eqref{eq:mee-sos} due to removing the ``$\log\det E$'' objective.

{\bf Estimating $Q$ from Samples}. Intuitively, \eqref{eq:gcc} may be more conservative than \eqref{eq:mee} because \eqref{eq:gcc} assumes $Q$ is given while \eqref{eq:mee} optimizes the shape matrix. However, if we can uniformly draw samples $(\xi^1,\dots,\xi^{N_s})$ from $\calS_\xi$, then a good estimate of $Q$ can be obtained as
\vspace{-2mm}
\bea \label{eq:estimate-Q}
Q = \frac{1}{N_s}\sum_{i=1}^{N_s} (\xi^i - \bar{\xi}) (\xi^i - \bar{\xi})\tran, \quad \bar{\xi} = \frac{1}{N_s} \sum_{i=1}^{N_s} \xi^i. \vspace{-3mm}
\eea 
We can use three different algorithms to uniformly sample $\calS_\xi$. (i) When $\calS$ is convex, we can use the \emph{hit-and-run} algorithm~\citep{belisle93mor-hit}, where each iteration involves solving a convex optimization. (ii) For Example \ref{ex:sme-object-pose}, \cite{yang2023object} proposed a RANSAG algorithm, where each iteration involves solving a geometry problem. (iii) We can first solve \eqref{eq:gcc} with $Q = I$ to obtain an {enclosing ball}, then perform \emph{rejection sampling} inside the enclosing ball. We use (iii) in our experiments as it is general and does not require convexity.

{\bf Convex Relaxations for \eqref{eq:gcc}}. Problem~\eqref{eq:gcc} is still nonconvex, but we can design a hierarchy of SDP relaxations that asymptotically converges to $\eta^\star$.
\begin{theorem}[Generalized Relaxed Chebyshev Center]
    \label{thm:grcc}
    Let $\calS$ in \eqref{eq:basic-semialgebraic-set} be Archimedean, $\kappa$ be an integer such that $2\kappa \geq \max \{2,\{\deg{g_i}\}_{i=1}^{l_g},\{\deg{h_j} \}_{j=1}^{l_h}\}$, and $s_n(d):=\left( \substack{n+d \\ d} \right)$, consider the following convex quadratic SDP
    \begin{subequations}\label{eq:grcc-relax}
        \bea 
        - \eta_\kappa^\star = \min_{z \in \Real{s_n(2\kappa)}} & 
        L_z(\theta)\tran P\tran Q P L_z(\theta) - \langle C, M_\kappa(z) \rangle \label{eq:gcc-relax-cost}\\
        \subject & M_\kappa(z) \succeq 0, \quad M_0(z) = 1 \label{eq:gcc-relax-1}\\
        & M_{\kappa - \lceil \deg{g_i}/2 \rceil}(g_i z) \succeq 0, \quad i = 1,\dots,l_g \label{eq:gcc-relax-2} \\
        & L_z(h_j [\theta]_{2\kappa-\deg{h_j}}) = 0, \quad j=1,\dots,l_h \label{eq:gcc-relax-3}
        \eea 
        where $z \in \Real{s_n(2\kappa)}$ is the pseudomoment vector in $\theta$ of degree up to $2\kappa$, $L_z(u)$ and $M_\kappa(uz)$ are both linear functions of $z$ given coefficients of a certain polynomial $u(\theta)$ (\cf Appendix~\ref{app:sec:moment-sos-pop}), and $C$ is a constant matrix whose expression is given in Appendix~\ref{app:sec:proof-grcc}. Let $z_{\star,\kappa}$ be an optimal solution to \eqref{eq:grcc-relax} and $\mu_{\star,\kappa} = L_{z_{\star,\kappa}}(\theta)$, then,
        \begin{enumerate}[label = (\roman*)]
            \item for any $\kappa$, the ellipsoid $\calE_{Q,\kappa}:= \{ \Vert \xi -  \mu_{\star,\kappa} \Vert_Q^2 \leq  \eta_\kappa^\star\}$ encloses $\calS_\xi$;
            \item $\eta^\star_\kappa \geq \eta^\star$ for any $\kappa$ and $\eta^\star_\kappa$ converges to $\eta^\star$ as $\kappa \rightarrow \infty$.
        \end{enumerate}
    \end{subequations}
\end{theorem}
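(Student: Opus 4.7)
The plan is to recognize \eqref{eq:grcc-relax} as the result of applying Lasserre's standard moment hierarchy to the \emph{inner} maximization in \eqref{eq:gcc}, then to carry out the outer $\min_\mu$ in closed form via a convex-concave minimax swap. For fixed $\mu$, the inner problem is the POP $\eta^\star_\mu = \max_{\theta \in \calS} f_\mu(\theta)$ with
\[
f_\mu(\theta) = (P\theta-\mu)\tran Q (P\theta-\mu) = \theta\tran P\tran Q P \theta - 2\mu\tran Q P \theta + \mu\tran Q \mu.
\]
Its order-$\kappa$ moment relaxation is $\bar{\eta}_\kappa(\mu) = \max_{z \in \Omega_\kappa} L_z(f_\mu)$, where $\Omega_\kappa$ denotes the convex feasible set carved out by \eqref{eq:gcc-relax-1}-\eqref{eq:gcc-relax-3}. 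Writing $L_z(\theta\tran P\tran Q P \theta) = \langle C, M_\kappa(z)\rangle$ for the constant symmetric matrix $C$ supported on the first-order block of $M_\kappa(z)$, this becomes
\[
\bar{\eta}_\kappa(\mu) = \max_{z \in \Omega_\kappa} \left[ \mu\tran Q \mu - 2 \mu\tran Q P L_z(\theta) + \langle C, M_\kappa(z) \rangle \right],
\]
and standard moment-SOS theory yields $\bar{\eta}_\kappa(\mu) \geq \eta^\star_\mu$ for every $\mu$.

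Next I would examine $\min_\mu \bar{\eta}_\kappa(\mu)$. The bracketed objective is convex quadratic in $\mu$ (because $Q \succ 0$) and linear in $z$, and for each $z \in \Omega_\kappa$ the inner minimizer is attained uniquely at $\mu^\star(z) = P L_z(\theta)$ with value $\langle C, M_\kappa(z) \rangle - L_z(\theta)\tran P\tran Q P L_z(\theta)$. This convex-concave structure together with coercivity in $\mu$ permits swapping the order of optimization, giving
\[
\min_\mu \bar{\eta}_\kappa(\mu) = \max_{z \in \Omega_\kappa} \left[ \langle C, M_\kappa(z) \rangle - L_z(\theta)\tran P\tran Q P L_z(\theta) \right] = \eta^\star_\kappa,
\]
which is exactly the negative of the objective in \eqref{eq:grcc-relax}. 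In particular, any optimizer $z_{\star,\kappa}$ of the right-hand side induces $\mu_{\star,\kappa} = P L_{z_{\star,\kappa}}(\theta)$, which attains the outer min, so $\bar{\eta}_\kappa(\mu_{\star,\kappa}) = \eta^\star_\kappa$.

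Claim (i) then follows at once: for every $\xi \in \calS_\xi$, $\|\xi - \mu_{\star,\kappa}\|_Q^2 \leq \eta^\star_{\mu_{\star,\kappa}} \leq \bar{\eta}_\kappa(\mu_{\star,\kappa}) = \eta^\star_\kappa$, hence $\xi \in \calE_{Q,\kappa}$. For claim (ii), the bound $\eta^\star_\kappa = \min_\mu \bar{\eta}_\kappa(\mu) \geq \min_\mu \eta^\star_\mu = \eta^\star$ is immediate. For convergence, the Archimedean hypothesis and Putinar's Positivstellensatz give $\bar{\eta}_\kappa(\mu) \downarrow \eta^\star_\mu$ pointwise as $\kappa \to \infty$, and restricting the outer minimization to a compact ball (justified because $\calS_\xi$ is compact and $\eta^\star_\mu$ is coercive in $\mu$) combined with the monotone convergence of the convex functions $\bar{\eta}_\kappa$ delivers $\eta^\star_\kappa \to \eta^\star$. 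The principal obstacle is rigorously justifying the minimax swap over unbounded $\mu \in \Real{d}$; I would address this either via Sion's theorem after confining $\mu$ to a large compact ball using the coercivity of the inner value in $\mu$, or by invoking strong Lagrangian duality for the convex quadratic SDP on the right-hand side of \eqref{eq:grcc-relax}.
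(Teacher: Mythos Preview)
Your proposal is correct and mirrors the paper's proof almost exactly: apply the order-$\kappa$ moment relaxation to the inner maximization, restrict $\mu$ to a compact ball and invoke Sion's minimax theorem to swap the order, solve the inner $\min_\mu$ in closed form as $\mu = P L_z(\theta)$, and conclude (i) and the lower bound in (ii) directly. The only cosmetic difference is in the convergence argument for (ii): the paper establishes uniformity in $\mu$ via the Nie--Schweighofer degree bound for Putinar's Positivstellensatz, whereas you appeal to monotone (Dini-type) convergence of the convex functions $\bar{\eta}_\kappa$ on a compact ball---both routes are valid.
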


The proof of Theorem~\eqref{thm:grcc}, together with a lifting technique to write \eqref{eq:grcc-relax} as a standard linear SDP, is given in Appendix~\ref{app:sec:proof-grcc}. 
The basic strategy is to first apply the moment-SOS hierarchy, with order $\kappa$, to relax the inner ``$\max_{\xi \in \calS_\xi}$'' (a polynomial optimization) as a convex SDP whose decison variable is $z \in \Real{s_n(2\kappa)}$, the pseudomoment vector constrained by \eqref{eq:gcc-relax-1}-\eqref{eq:gcc-relax-3}. Then, by invoking Sion's minimax theorem~\citep{sion1958general}, one can switch ``$\min_{\mu}$'' and ``$\max_{z}$'', after which the ``$\min_{\mu}$'' admits a closed-form solution $\mu = L_z(\theta)$ and leads to a single-level optimization with cost \eqref{eq:gcc-relax-cost}. It is worth noting that letting $\kappa=1$ (and $Q=I$) in \eqref{eq:grcc-relax} recovers the relaxed Chebyshev center in \cite{eldar2008minimax} for bounded error estimation. For this reason, we call $\mu_{\star,\kappa}$ the \emph{generalized relaxed Chebyshev center} of order $\kappa$ (GRCC-$\kappa$). As we will see in Section~\ref{sec:exp:system-id:rand-linear}, the enclosing balls at GRCC-$\kappa$ with $\kappa\geq 2$ is significantly smaller than those of the RCC.

\vspace{-3mm}
\subsection{Handling the non-Euclidean Geometry of $\SOthree$}
\label{sec:handle-so3}

With the \eqref{eq:gcc} formulation, we can approximate the minimum ellipsoid that encloses $\calS_t$, the translation part of the SME in Example~\ref{ex:sme-object-pose}. The last challenge lies in enclosing $\calS_R$, the rotation part of~\eqref{eq:sme-object-pose}. Since the concept of an ellipsoid is not well defined on a Riemannian manifold such as $\SOthree$, it is natural to seek an enclosing \emph{geodesic ball}. This is closely related to the problem of finding the \emph{Riemannian minimax center} of $\calS_R$, defined as~\citep{arnaudon13cg-approximating}
\bea\label{eq:SO3minimax-center}
\mu_{R,\star} = \argmin_{\mu \in \SOthree} \max_{R \in \calS_R} d_{\SOthree}(\mu,R),
\eea 
where, given two rotations $\mu,R \in \SOthree$, $d_{\SOthree}(\mu,R)$ denotes the geodesic distance on $\SOthree$ defined by $d_{\SOthree}(\mu,R) = \arccos((\trace{\mu\tran R} - 1)/2)$. Clearly, if we can find $\mu_{R,\star}$, then we have found the best point estimate from which the worst-case error bound is the smallest.

Unfortunately, problem~\eqref{eq:SO3minimax-center} cannot be directly solved using Theorem~\ref{thm:grcc} because (i) $\mu \in \SOthree$ is constrained (instead of in \eqref{eq:gcc} $\mu \in \Real{d}$ is unconstrained), and (ii) the objective ``$d_{\SOthree}(\mu,R)$'' is not a polynomial. The next result resolves these issues by leveraging unit quaternions.


\begin{theorem}[Minimum Enclosing Ball for Rotations]
    \label{thm:minimum-ball-SO3}
    Suppose there exists a 3D rotation $\bar{R}$ such that $d_{\SOthree}(\bar{R},R) \leq \frac{\pi}{2}, \forall R \in \calS_R$, and let $\bar{q} \in S^3:= \{ q \in \Real{4} \mid \norm{q} = 1 \}$ (or $-\bar{q}$) be any one of the two unit quaternions associated with $\bar{R}$. Consider the following Chebyshev center problem in $\Real{4}$
    \bea \label{eq:quaternion-minimax}
    \mu_\star = \argmin_{\mu \in \Real{4}}  \max_{q \in \Real{4}}  \left\{ \Vert \mu - q \Vert^2 \mid q \in S^3, \calR(q) \in \calS_R, \bar{q}\tran q \geq 0\right\},
    \eea
    where $\calR: S^3 \rightarrow \SOthree$ maps a unit quaternion to a 3D rotation matrix (whose expression is given in Appendix~\ref{app:sec:proof-ball-so3}). Then the projection of $\mu_\star$ onto $S^3$ is the solution to \eqref{eq:SO3minimax-center}, \ie $\mu_{R,\star} = \mu_\star/\norm{\mu_\star}$.
\end{theorem}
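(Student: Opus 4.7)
Proof plan for Theorem~\ref{thm:minimum-ball-SO3}. The strategy is to lift \eqref{eq:SO3minimax-center} from $\SOthree$ to the 3-sphere $S^3$ via the double-cover map $\calR$, exploit the reduction from geodesic distance on $\SOthree$ to Euclidean distance on $S^3$ within a single hemisphere, and then show that the hemisphere constraint in \eqref{eq:quaternion-minimax} lets us drop the $S^3$ constraint on $\mu$ without loss.

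First, I would collect two facts about quaternions that drive everything. The map $\calR\colon S^3 \to \SOthree$ is a $2$-to-$1$ covering with $\calR(q)=\calR(-q)$, and for unit quaternions $p,q$ with $p\tran q \geq 0$ one has $d_{\SOthree}(\calR(p),\calR(q)) = 2\arccos(p\tran q)$, which is strictly decreasing in $p\tran q$. The hypothesis $d_{\SOthree}(\bar R,R)\leq \pi/2$ for every $R\in\calS_R$, after choosing the sign of each lifted quaternion $q$ so that $\bar q\tran q \geq 0$, gives $\bar q\tran q\geq \cos(\pi/4)=\sqrt{2}/2$. Thus the feasible set $\calQ:=\{q\in S^3:\calR(q)\in\calS_R,\;\bar q\tran q\geq 0\}$ of \eqref{eq:quaternion-minimax} is contained in the spherical cap $\{q:\bar q\tran q\geq \sqrt{2}/2\}$, a containment I will use repeatedly.

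Next I would evaluate \eqref{eq:quaternion-minimax} by separating the norm and direction of $\mu$. Writing $\mu=t\mu'$ with $t=\norm{\mu}$ and $\mu'\in S^3$, the identity $\norm{\mu-q}^2 = t^2 - 2t(\mu')\tran q +1$ for $q\in S^3$ makes the inner maximum equal $t^2 - 2t\, c(\mu') + 1$, where $c(\mu'):=\min_{q\in\calQ}(\mu')\tran q$. Minimizing over $t\geq 0$ yields $t_\star = c(\mu')$ (assuming $c(\mu')>0$) with optimal value $1-c(\mu')^2$, so the outer problem reduces to $\max_{\mu'\in S^3} c(\mu')$ and $\mu_\star/\norm{\mu_\star}=\mu'_\star$.

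The last step is to argue that $\mu'_\star$ coincides with the (quaternion representative of the) Riemannian minimax center. The key verification is that $(\mu'_\star)\tran q\geq 0$ for every $q\in\calQ$, so that the arccos identity applies without absolute value. A standard projection argument (projecting $\mu_\star$ onto $\mathrm{conv}(\calQ)$ strictly decreases the max distance unless $\mu_\star\in\mathrm{conv}(\calQ)$) shows $\mu_\star$ lies in $\mathrm{conv}(\calQ)$, whence $\bar q\tran \mu_\star \geq \sqrt{2}/2$ and $\norm{\mu_\star}\leq 1$, giving $\bar q\tran \mu'_\star \geq \sqrt{2}/2$; since both $\mu'_\star$ and any $q\in\calQ$ then lie within geodesic angle $\pi/4$ of $\bar q$ on $S^3$, they are within angle $\pi/2$ of each other, so $(\mu'_\star)\tran q\geq 0$. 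With this in hand, maximizing $c(\mu')$ is equivalent to minimizing $\max_{q\in\calQ} 2\arccos((\mu')\tran q) = \max_{R\in\calS_R} d_{\SOthree}(\calR(\mu'),R)$, which yields $\mu_{R,\star}=\mu_\star/\norm{\mu_\star}$ under the identification $\SOthree\cong S^3/\{\pm 1\}$. The main obstacle is precisely this hemisphere bookkeeping: ensuring $\mu_\star\neq 0$, that both $\mu'_\star$ and every $q\in\calQ$ sit in a common cap of radius $\pi/4$ around $\bar q$, and that the sign ambiguity of the double cover never intrudes on the arccos identity.
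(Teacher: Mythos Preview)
Your plan is correct and takes a genuinely different route from the paper. The paper isolates a geometric lemma (its Proposition on ``miniball on $S^3$ equals miniball in $\mathbb{R}^4$'') and proves it by contradiction: if a smaller geodesic ball $C_1$ on $S^3$ existed, the Euclidean ball in $\mathbb{R}^4$ whose equator is $C_1$ would contain the spherical cap bounded by $C_1$ (a ``ball crown'' computation) and hence beat the Euclidean miniball. You instead write $\mu=t\mu'$ with $\mu'\in S^3$, optimize out $t$ in closed form, and reduce \eqref{eq:quaternion-minimax} directly to $\max_{\mu'\in S^3}\min_{q\in\calQ}(\mu')\tran q$. Your convex-hull projection argument placing $\mu_\star$ in $\mathrm{conv}(\calQ)$, and hence $\mu'_\star$ in the $\pi/4$-cap around $\bar q$, is cleaner than anything the paper writes. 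The one place your plan should be sharpened is the last equivalence: you verify $(\mu'_\star)\tran q\geq 0$ on $\calQ$, which matches the $S^3$ and $\SOthree$ objectives \emph{at} $\mu'_\star$, but to conclude $\mu'_\star$ minimizes the $\SOthree$ objective you must also rule out competitors $p\in S^3$ whose inner product $p\tran q$ changes sign on $\calQ$; this follows from the cap constraint (if $p\tran q_0<0<p\tran q_1$ with $q_0,q_1$ both within $\pi/4$ of $\bar q$, the spherical triangle inequality forces $\arccos|p\tran q_0|+\arccos|p\tran q_1|\geq \pi/2$, so $\max_q d_{\SOthree}(\calR(p),\calR(q))\geq\pi/2\geq 2\arccos(c(\mu'_\star))$), and is worth one explicit line. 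The paper's translation from $\SOthree$ to $S^3$ is equally informal on this same point, so neither argument is more complete than the other; your analytic reduction is arguably more transparent, while the paper's geometric statement is reusable for arbitrary subsets of $S^3$.
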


The proof of Theorem~\ref{thm:minimum-ball-SO3} is given in Appendix~\ref{app:sec:proof-ball-so3}. Theorem \ref{thm:minimum-ball-SO3} states that we can solve the original Riemannian minimax problem \eqref{eq:SO3minimax-center} by solving \eqref{eq:quaternion-minimax}, which is easily verified to be an instance of \eqref{eq:gcc} and hence can be approximated by the SDP relaxations in Theorem~\ref{thm:grcc}. Theorem~\ref{thm:minimum-ball-SO3} requires the existence of $\bar{R}$ such that all rotations in $\calS_R$ are reasonably close to $\bar{R}$ and similar assumptions are needed in \cite{arnaudon13cg-approximating}. In other words, $\calS_R$ needs to have relatively small uncertainty. This assumption can be numerically verified by first using the RANSAG algorithm in \cite{yang2023object} to compute $\bar{R}$ and then compute the maximum distance between $\bar{R}$ and $\calS_R$, which we found empirically to be below $\frac{\pi}{2}$ for a majority of the test problems.

For a summary of the whole workflow, see Appendix~\ref{app:sec:summary}.

\vspace{-5mm}

\section{Experiments}
\label{sec:experiments}

\vspace{-1mm}
\begin{figure}[h]
	\begin{center}
		\begin{tabular}{cccc}
            \hspace{-9mm}	
			\begin{minipage}{0.25\textwidth}
				\centering
				\includegraphics[width=\textwidth]{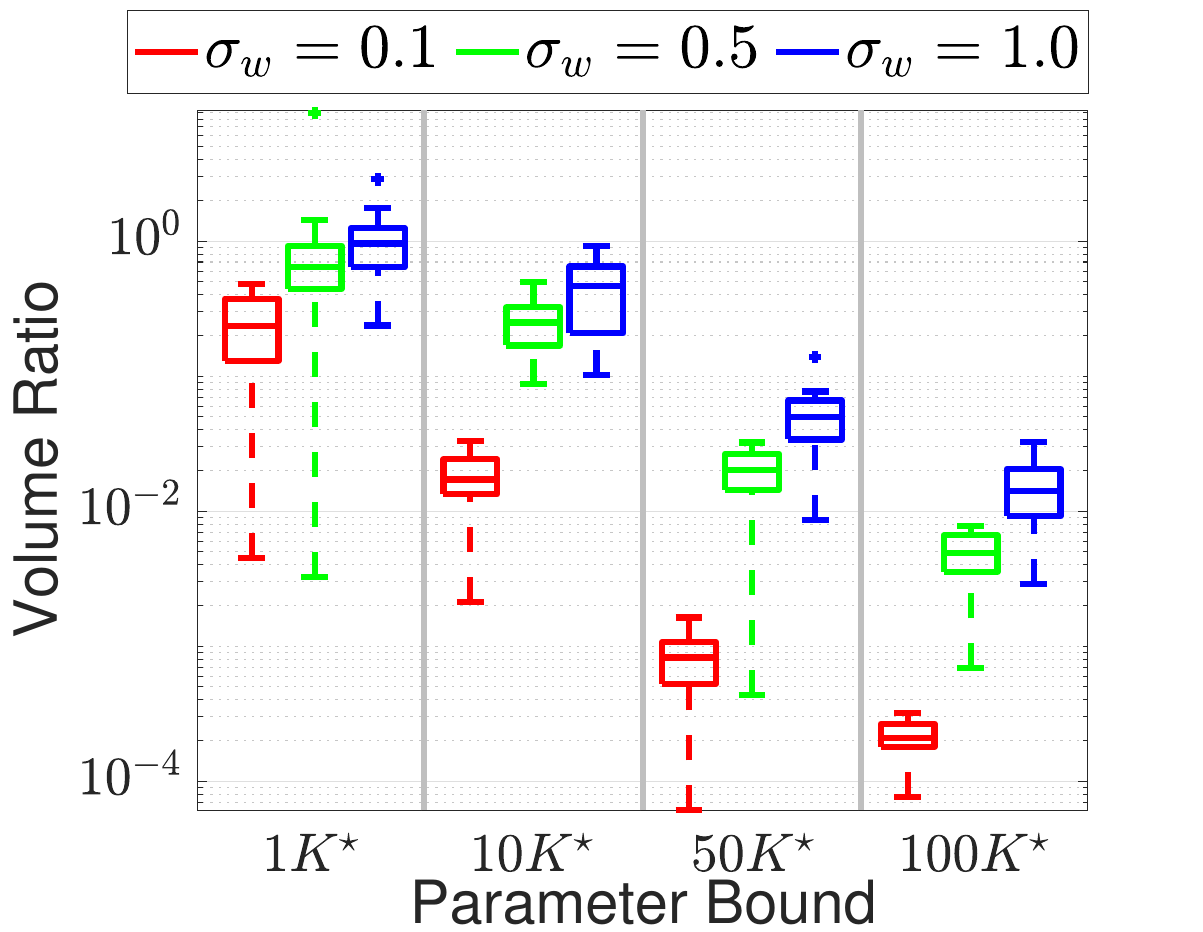}
			\end{minipage}
			& \hspace{-7mm}
			\begin{minipage}{0.25\textwidth}
				\centering
				\includegraphics[width=\textwidth]{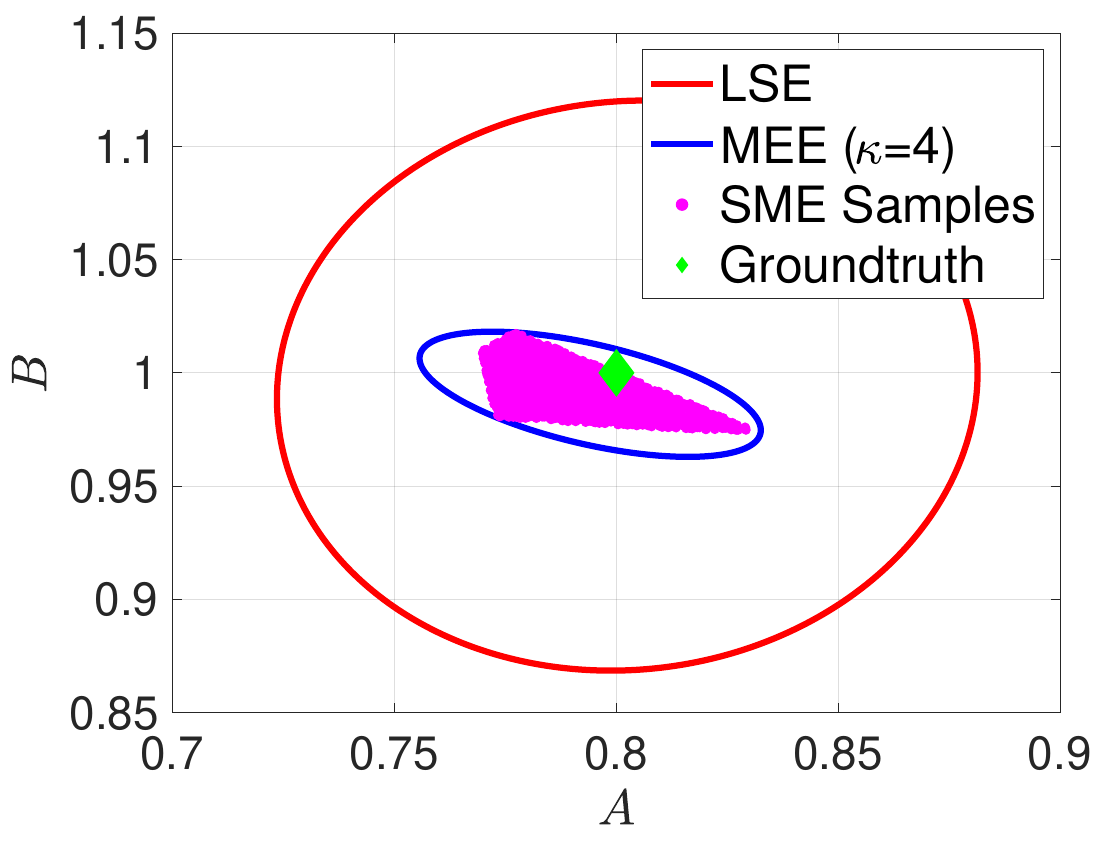}
			\end{minipage}
			&  \hspace{-3mm}
			\begin{minipage}{0.25\textwidth}
				\centering
				\includegraphics[width=\textwidth]{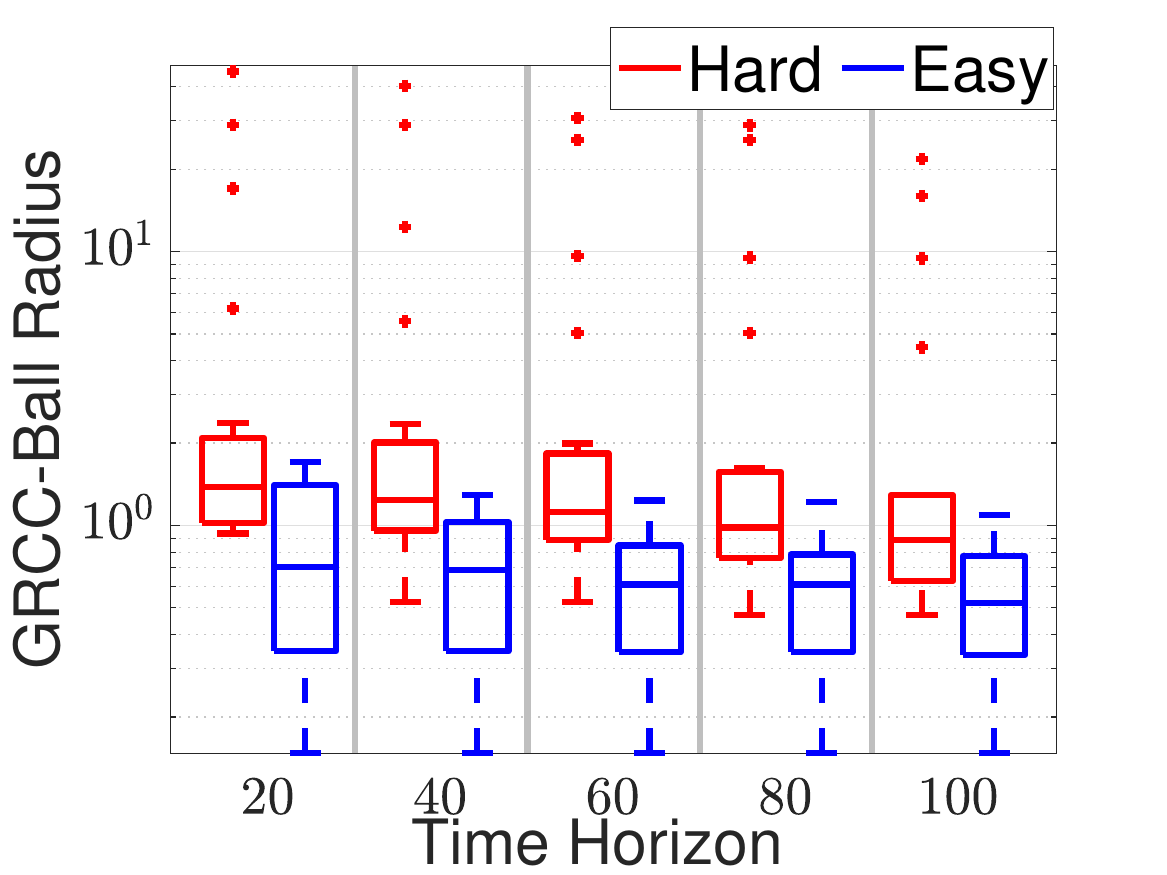}
			\end{minipage}
            &  \hspace{-3mm}
			\begin{minipage}{0.25\textwidth}
				\centering
				\includegraphics[width=\textwidth]{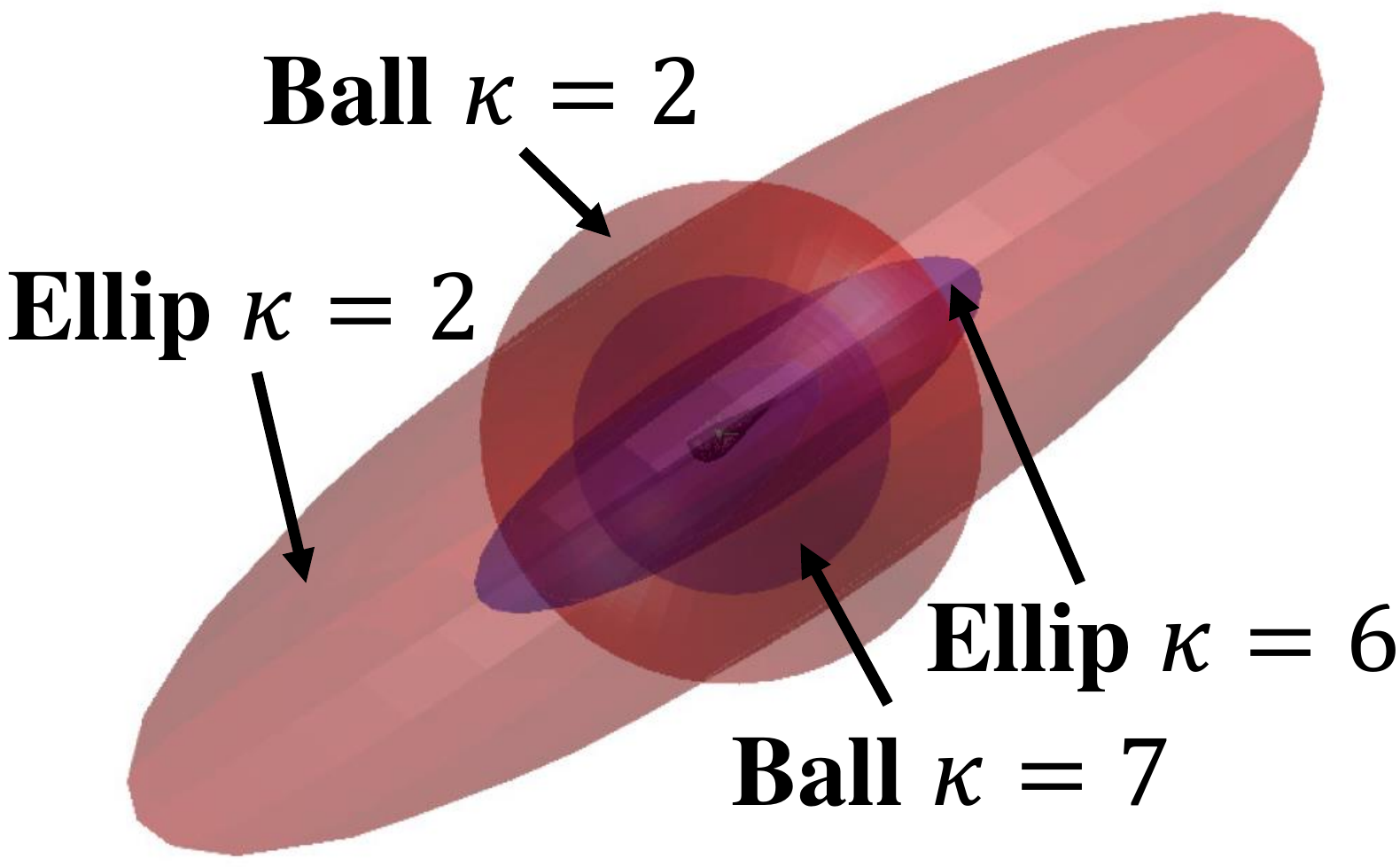}
			\end{minipage} \\
			\multicolumn{2}{c}{\smaller (a) Comparison to least squares estimation} & {\smaller (c) Hard-to-learn systems} & {\smaller (d) Constraints pruning}
        \end{tabular}
        
		\begin{tabular}{ccc}
            \hspace{-10mm}	
            \begin{minipage}{0.34\textwidth}
				\centering
				\includegraphics[width=\textwidth]{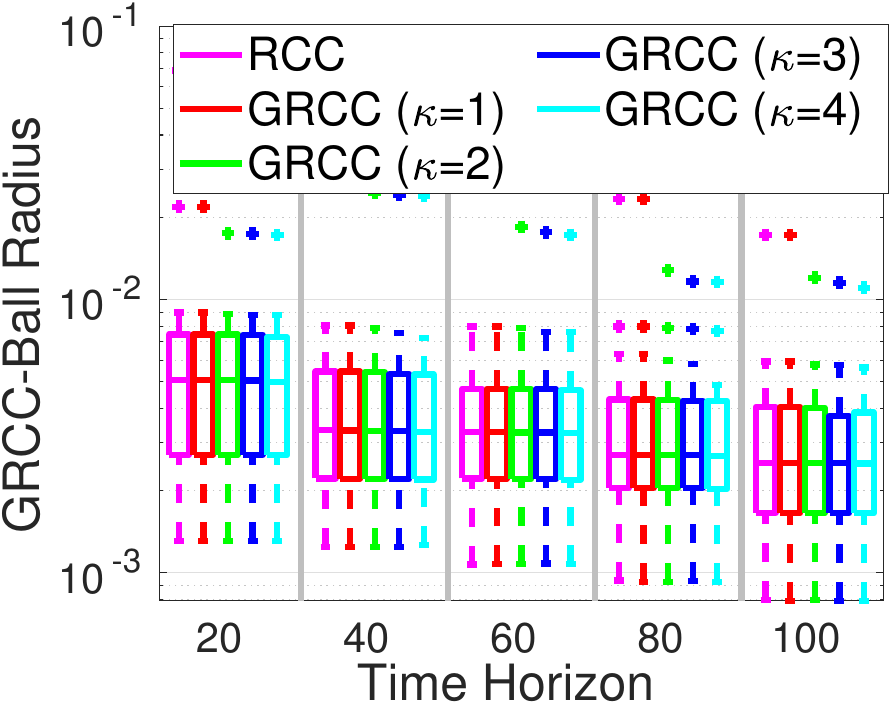}
			\end{minipage}
			&\hspace{-3mm}
			\begin{minipage}{0.34\textwidth}
				\centering
				\includegraphics[width=\textwidth]{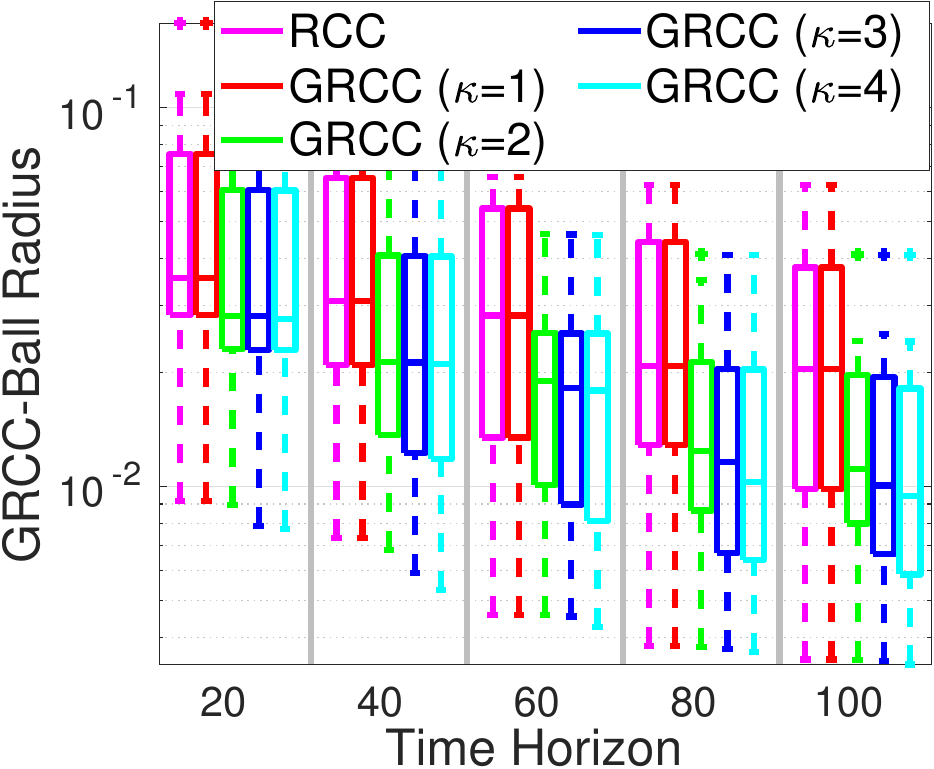}
			\end{minipage}
			&\hspace{-3mm}
			\begin{minipage}{0.34\textwidth}
				\centering
				\includegraphics[width=\textwidth]{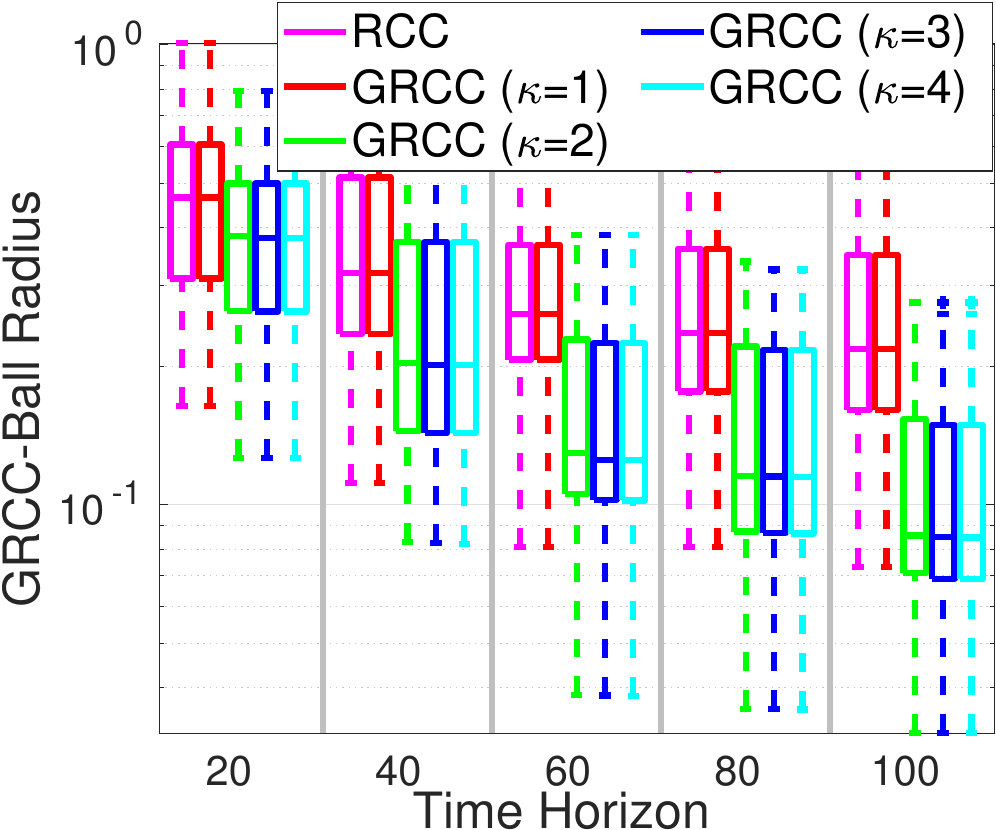}
			\end{minipage}\\
			\multicolumn{3}{c}{\smaller (b) Identification of random linear systems, from left to right: $\beta = 0.01$, $\beta=0.1$, $\beta=1.0$} \\
		\end{tabular}
	\end{center}
	\vspace{-7mm}
	\caption{Experimental results on system identification (Example~\ref{ex:sme-system-id}). 
    \label{fig:exp:system-id}}
	\vspace{-6mm}
\end{figure}

\subsection{System Identification (Example~\ref{ex:sme-system-id})}
\label{sec:exp:system-id}

\vspace{-2mm}
\subsubsection{Comparison with Least Squares Estimation}
\label{sec:exp:system-id:compare-lse}
\vspace{-2mm}
Consider a one-dimensional linear system $x_{k+1} = A_\star x_{k} + B_\star u_k + w_k$ with $A_\star = 0.8, B_\star = 1.0$, and $w_k \sim \calN(0,\sigma_w^2)$.\footnote{This is the same setup as in~\cite{li23arxiv-learning}.} Suppose we collected a system trajectory $x_0,u_0,\dots,x_{N}$, we wish to find a set $\calS$ such that we can guarantee $\mathbb{P}[(A_\star,B_\star) \in \calS ] \geq 80\%$. \cite[Theorem 1]{abbasi11colt-regret} provides an ellipse centered at the least-squares estimator (LSE) that satisfies the probabilistic coverage guarantee, which requires knowing a bound such that $A_\star^2 + B_\star^2 \leq K^2$. Let $K_\star = \sqrt{A_\star^2 + B_\star^2}$ be the best bound.

We can get much smaller coverage sets via the SME \eqref{eq:sme-system-id} and its enclosing ellipsoids without knowledge on $K_\star$. Let $\beta > 0$ be the smallest number such that $\mathbb{P}[w_k^2 \leq \beta] \geq (80\%)^{1/N}$, implying $\mathbb{P}[w_0^2 \leq \beta,\dots,w_{N-1}^2 \leq \beta] \geq 80\%$ (assume $w_k$'s are independent). Consequently, the SME \eqref{eq:sme-system-id} covers $(A_\star,B_\star)$ with probability at least $80\%$. Let $\calE_4$ be the enclosing ellipse of \eqref{eq:sme-system-id} computed via Theorem~\ref{thm:mee-sos} using $\kappa=4$ and $\calE_{\mathrm{LSE}}$ be the confidence ellipse from \cite{abbasi11colt-regret}. We compute $\vol{\calE_2}/\vol{\calE_{\mathrm{LSE}}}$ as the volume ratio, which indicates SME finds a smaller enclosing ellipse when it is below $1$. 

Fig.~\ref{fig:exp:system-id}(a) boxplots the volume ratio (in $\log$ scale) under different choices of $\sigma_w$ and $K$ with fixed $N=50$ (we perform $20$ random experiments in each setup). We observe that LSE outperforms SME only when (i) LSE has access to a precise upper bound $K_\star$ (which we think is often unrealistic) and (ii) the noise $w_k$ is large. In all other cases, the enclosing ellipse of SME is orders of magnitude smaller than that of LSE. A sample visualization is provided in Fig.~\ref{fig:exp:system-id}(a). {The parameters are $K = K^*, \sigma_w = 0.1$ and the number of sampled points is $5000$.

However, it is worth noting that in order to apply SME, we need to know the noise bound (or a high probability bound) $\beta$, while LSE does not neccessarily require such bounds or distributional assumptions unless one wants to estimate the uncertainty.}
\vspace{-3mm}
\subsubsection{Identification of Random Linear Systems}
\label{sec:exp:system-id:rand-linear}
\vspace{-2mm}
Consider a linear system $x_{k+1} = A_\star x_k + B u_k + w_k,k=0,\dots,N-1$. We generate random $A_\star \in \Real{n_x \times n_x},B \in \Real{n_x \times n_u}$ with each entry following a standard Gaussian distribution $\calN(0,1)$, then truncate the singular values of $A_\star$ that are larger than 1 to 1. $u_k$ follows $\calN(0,I)$, and $w_k$ is random inside a ball with radius $\beta$. We treat $B$ as known. We compute enclosing balls of the SME~\eqref{eq:sme-system-id} using two algorithms: the RCC algorithm in \cite{eldar2008minimax}, and our GRCC algorithm presented in Theorem~\ref{thm:grcc}. Fig.~\ref{fig:exp:system-id}(b) plots the radii of the enclosing balls with $N\in\{20,40,60,80,100\}$, $n_x = 2$, and $\beta \in \{0.01,0.1,1.0 \}$ (each boxplot summarizes $20$ random experiments). We observe that (i) GRCC ($\kappa=1$) leads to exactly the same result as RCC, verifying that our GRCC algorithm recovers the RCC algorithm with $\kappa=1$. (ii) The enclosing balls get much smaller with high-order relaxations (\ie larger $\kappa$). {More results with $n_x = 3$ and $n_x = 4$, also the runtime for the $n_x = 2$ experiment are presented in Appendix~\ref{app:sec:experiments}.}

\vspace{-3mm}
\subsubsection{Linear Systems that Are Hard to Learn}
\label{sec:exp:system-id:hard-to-learn}
\vspace{-2mm}
\cite{tsiamis2021linear} presented examples of linear systems that are hard to learn. We show that SME and its enclosing balls are \emph{adaptive to the hardness of identification}, \ie one gets large enclosing balls when the system is hard to learn.
Consider the system $x_{k+1} = A x_k + H w_k$ with 
$$
    A = \begin{bmatrix}
        0 & \theta_1 & 0 \\
        0 & 0 & \theta_2 \\
        0 & 0 & 0
    \end{bmatrix}, \quad H = \begin{bmatrix}
        1 & 0 \\
        0 & 0 \\
        0 & 1
    \end{bmatrix}
$$
where $w_k$ is a random disturbance with noise bound 0.1. Consider (i) an easy-to-learn system with $\theta_2=1$, and (ii) a hard-to-learn system with $\theta_2 = 10^{-5}$. Let $\theta_1 = 1$. Fig.~\ref{fig:exp:system-id}(c) boxplots the radii of the enclosing balls for the SME of $(\theta_1,\theta_2)$ with $\kappa=2$ and increasing $N$. Clearly, we observe that the radii for hard-to-learn systems are much larger than that for easy systems. 


\vspace{-3mm}
\subsubsection{Constraints Pruning in A Long Trajectory}
\label{sec:exp:system-id:pendulum-long-trajectory}
Consider the continuous-time dynamics of a simple pendulum
\vspace{-5mm}
$$
\begin{bmatrix}
x_1 \\
x_2
\end{bmatrix} =
\begin{bmatrix}
    x_2 \\ - \frac{\damping}{\mass \length^2} x_2 + \frac{1}{\mass \length^2} u - \frac{\gravity}{\length} \sin x_1
\end{bmatrix}
=
\begin{bmatrix}
    0 & 1 & 0 & 0  \\
    0 & -\frac{\damping}{\mass \length^2} & \frac{1}{\mass \length^2} & - \frac{\gravity}{\length}
\end{bmatrix}
\begin{bmatrix}
    x_1  \\
    x_2 \\
    u \\
    \sin x_1
\end{bmatrix}
$$
where $x=(x_1,x_2)$ is the state (angle and angular velocity), $\damping$ is the damping ratio, $\mass$ is the mass, $\length$ is the length of the pole, and $\gravity$ is the gravity constant. We wish to identify $\theta_1 = \frac{\damping}{\mass \length^2}$, $\theta_2 = \frac{1}{\mass \length^2}$ and $\theta_3 = \frac{\gravity}{\length}$. To do so, we discretize the dynamics using Euler method with $dt = 0.01$, add random disturbance $w_k$ that has bounded norm 0.1, and collect a single trajectory of length $N=1000$. Without constraint pruning, we can only run the GRCC algorithm with $\kappa=3$ 
. We then prune the constraint set using Algorithm~\ref{alg:prune-redundant-constraints}, which leads to a much smaller set of $19$ constraints (only $1.9\%$ of the original number of constraints). We can then increase the relaxation order of GRCC and the resulting enclosing ellipsoid and ball become much smaller as visualized in Fig.~\ref{fig:exp:system-id}(d). The volume of the enclosing ball with $\kappa=7$ is only $1.08 \times 10^{-6}$, indicating the SME has almost converged to a single point. The time result is provided in Appendix~\ref{app:sec:experiments}.

\vspace{-4mm}
\subsection{Object Pose Estimation (Example~\ref{ex:sme-object-pose}) }
\label{sec:exp:purse}
\vspace{-2mm}
We follow the same procedure as \cite{yang2023object}, \ie we use conformal prediction to calibrate the norm bounds $\beta_i$ for the noise vectors $\epsilon_i$ (\cf \eqref{eq:camera-measurement-model}) generated by the pretrained neural network in~\cite{pavlakos17icra-heatmap} and form the SME~\eqref{eq:sme-object-pose}. We then use the GRCC algorithm in Theorem~\ref{thm:grcc} with $\kappa=3$ to compute enclosing balls (for the rotation we apply GRCC in Theorem~\ref{thm:minimum-ball-SO3} to compute enclosing geodesic balls). We compare the radii of the enclosing balls obtained by GRCC with the radii of the enclosing balls obtained by RANSAG of \cite{yang2023object}.\footnote{RANSAG first estimates an average rotation and translation, then uses SDP relaxations to compute the inner ``$\max$'' problem in \eqref{eq:gcc}. In other words, RANSAG does not seek to find a better estimate with smaller error bounds.} Fig.~\ref{fig:exp:purse}(a) shows the empirical cumulative distribution function (CDF) of the translation bounds and rotation bounds, respectively (there are $7035$ translation problems and $6661$ rotation problems). We observe that the error bounds obtained by GRCC are smaller than those obtained by RANSAG. Examples of enclosing balls and ellipsoids for the translation SME are shown in Fig.~\ref{fig:exp:purse}(b), where we observe that the enclosing ellipsoid precisely captures the shape of the SME. Fig.~\ref{fig:exp:purse}(b) also plots the enclosing balls of the rotation SME using stereographic projection.

\vspace{-3mm}
\begin{figure}[h]
	\begin{center}
		\begin{tabular}{cccc}
            \hspace{-10mm}	
			\begin{minipage}{0.28\textwidth}
				\centering
				\includegraphics[width=\textwidth]{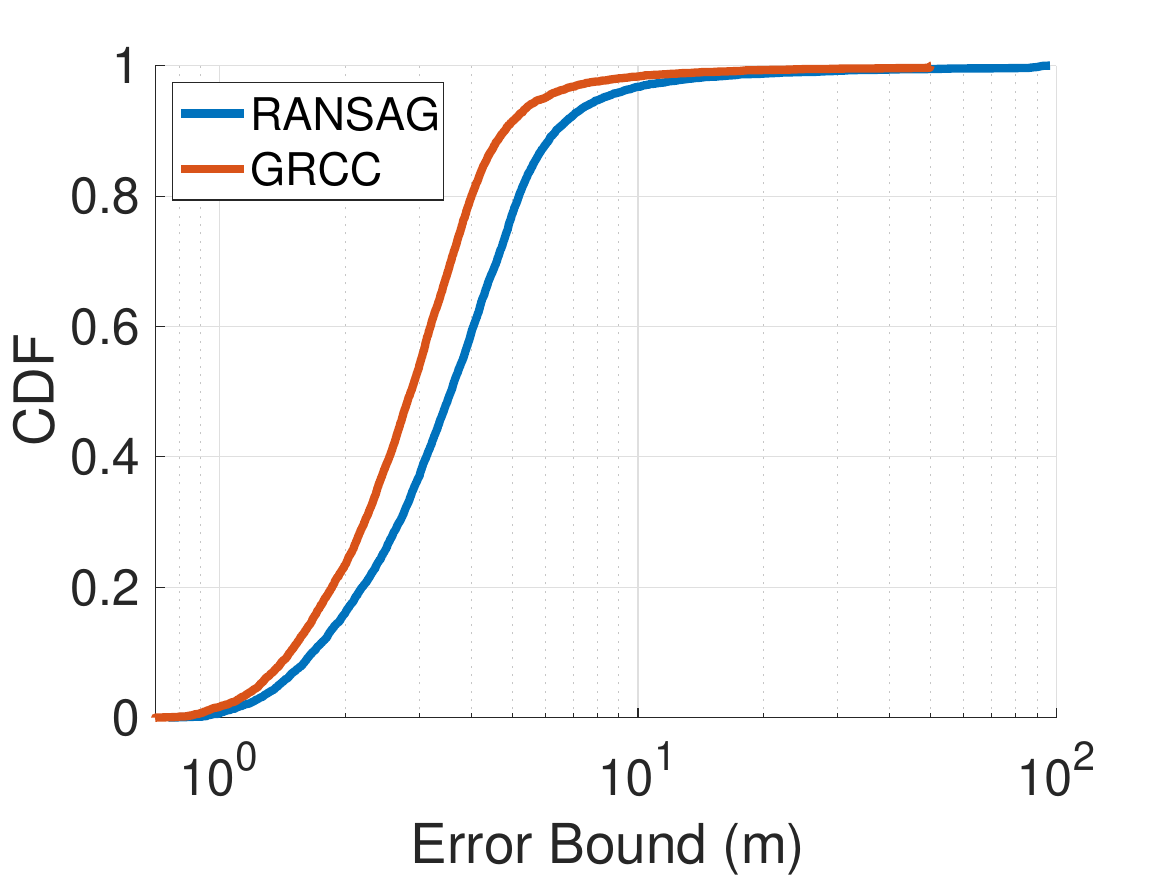}
			\end{minipage}
			&\hspace{-6mm}
			\begin{minipage}{0.28\textwidth}
				\centering
				\includegraphics[width=\textwidth]{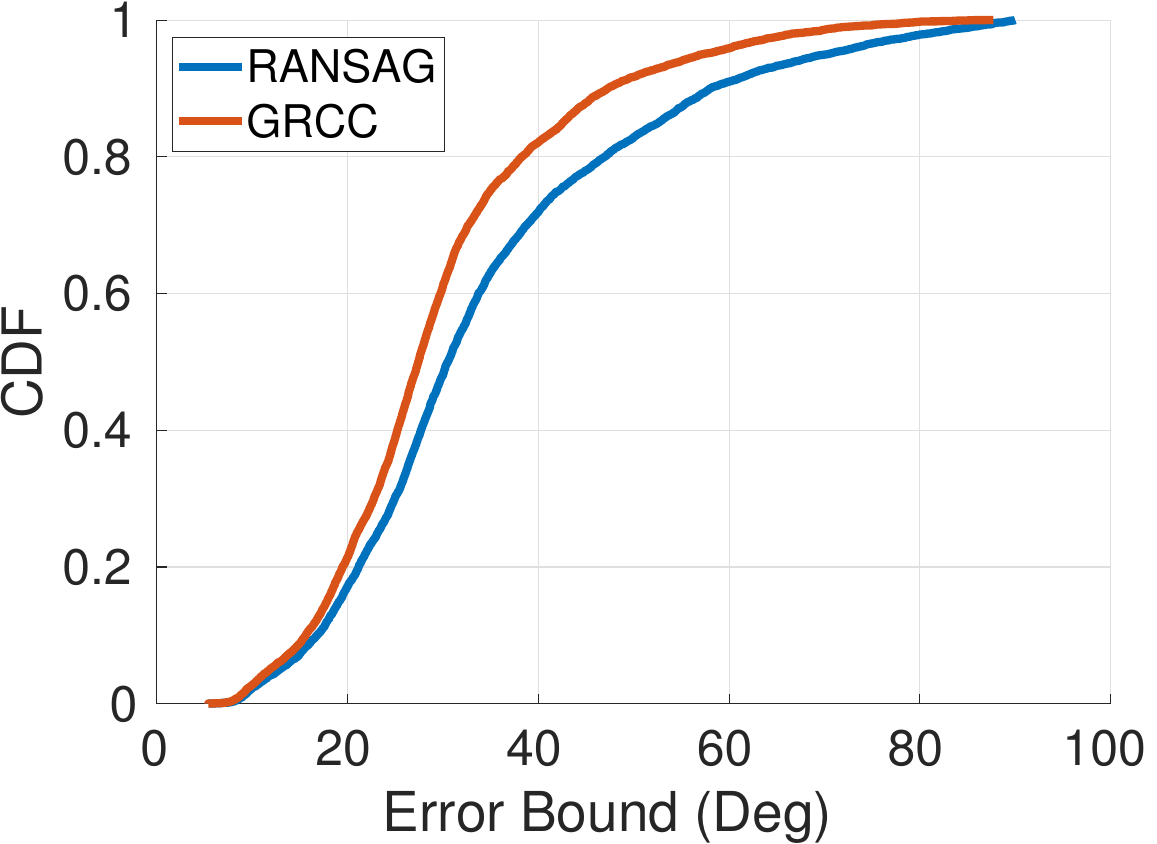}
			\end{minipage}
			&\hspace{-2mm}
			\begin{minipage}{0.23\textwidth}
				\centering
				\includegraphics[width=\textwidth]{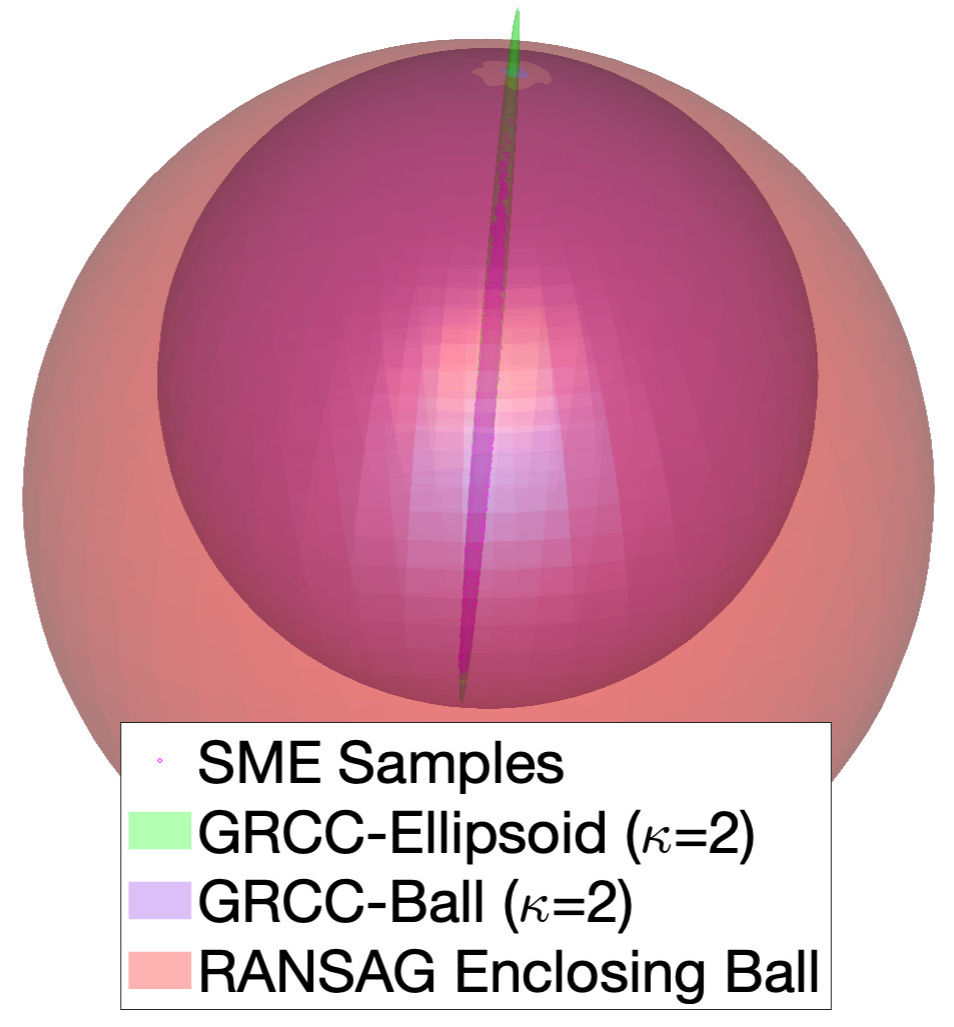}
			\end{minipage}
            &\hspace{-4mm}
			\begin{minipage}{0.23\textwidth}
				\centering
				\includegraphics[width=\textwidth]{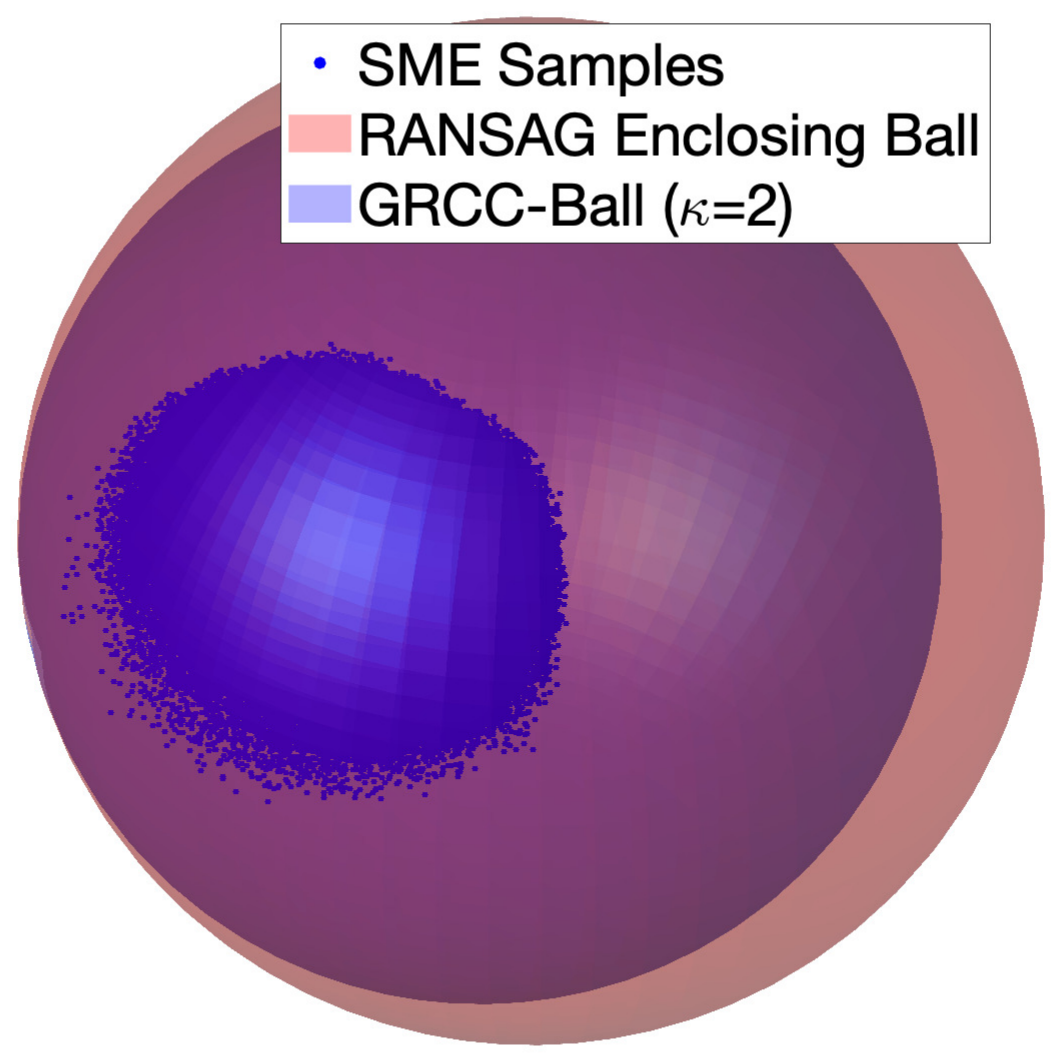}
			\end{minipage} \\
			\multicolumn{2}{c}{\smaller (a) CDF plots. Left: translation, right: rotation} & \multicolumn{2}{c}{\hspace{-4mm} \smaller (b) Enclosing balls/ellipsoids. Left: translation, right: rotation.} 
        \end{tabular}
	\end{center}
	\vspace{-6mm}
	\caption{Experimental results on object pose estimation (Example~\ref{ex:sme-object-pose}). 
    \label{fig:exp:purse}}
	\vspace{-5mm}
\end{figure}


For extra experiments, we refer the readers to the Appendix~\ref{app:sec:experiments}.

\vspace{-5mm}

\section{Conclusions}
\label{sec:conclusion}
We introduced a suite of computational algorithms based on semidefinite programming relaxations to compute minimum enclosing ellipsoids of set-membership estimation in system identification and object pose estimation. Three computational enhancements are highlighted, namely constraints pruning, generalized relaxed Chebyshev center, and handling non-Euclidean geometry. These algorithms are still limited to small- and medium-sized problems (though these problems are already interesting) due to computational challenges in semidefinite programming. Multiple future research directions are possible, \eg applying SME to system identification with partial observations, extending SME on object pose estimation to more perception problems, and integrating SME with adaptive control and reinforcement learning.


\clearpage
\appendix
\setcounter{equation}{0}
\setcounter{table}{0}
\setcounter{figure}{0}
\setcounter{theorem}{0}
\renewcommand{\theequation}{A\arabic{equation}}
\renewcommand{\theproposition}{A\arabic{proposition}}
\renewcommand{\thetheorem}{A\arabic{theorem}}
\renewcommand{\theassumption}{A\arabic{assumption}}
\renewcommand{\thefigure}{A\arabic{figure}}
\renewcommand{\thetable}{A\arabic{table}}

\section{Non-Gaussian Measurement Noise from Neural Networks}
\label{app:sec:non-Gaussian}

In this section, we provide, to the best of our knowledge, the first numerical evidence that the noise in measurements generated by neural networks in object pose estimation (Example~\ref{ex:sme-object-pose}) does not follow a Gaussian distribution. 

Our strategy is to compute the noise vector $\epsilon_i$ from \eqref{eq:camera-measurement-model} as
$$
\epsilon_i = y_i - \Pi(R_{\mathrm{gt}}Y_i + t_{\mathrm{gt}}),\quad i = 1,\dots,N,
$$
where $(\Rgt,\tgt)$ is the groundtruth camera pose, and $y_i$'s are neural network detections of the object keypoints. We use the LineMOD Occlusion (\lmo) dataset~\citep{brachmann14eccv-lmo} that includes 2D images picturing a set of household objects on a table from different camera perspectives. The groundtruth camera poses are annotated and readily available in the dataset. We use the 2D keypoint detector in~\cite{pavlakos17icra-heatmap} that was trained on the \lmo dataset. At test time, the trained model outputs a heatmap of the pixel location of each semantic keypoint, and following~\cite{pavlakos17icra-heatmap} we set $y_i$ as the peak (maximum likelihood) location in the heatmap for each keypoint.

This procedure produces a set of $N=83,347$\footnote{There are 8 different objects in the dataset, each with about 8 semantic keypoints. The dataset has 1214 images.} 2D noise vectors $\{ \epsilon_i \}_{i=1}^N$ and we want to test if they are drawn from a multivariate Gaussian (normal) distribution. To do so, we use the R package \mvn~\citep{korkmaz14R-mvn} that provides a suite of popular multivariate normality tests well established in statistics. To consider potential outliers in the noise vectors $\{\epsilon_i\}_{i=1}^N$ (\ie maybe only the smallest noise vectors satisfy a Gaussian distribution~\citep{antonante21tro-outlier}), we run \mvn on $\alpha\%$ of the noise vectors with smallest magnitudes, and we sweep $\alpha$ from $1$ (\ie keep only the $1\%$ smallest noise vectors) up to $100$ (\ie keep all noise vectors).

\begin{table}[h]
	\centering
    \begin{adjustbox}{width=1\textwidth}
	\begin{tabular}{ c|c|c|c|c|c|c } 
		\hline
		Percentage & Mardia Skewness & Mardia Kurtosis &  Henze-Zirkler & Royston &  Doornik-Hansen & Energy\\ 
		\hline
		1\% & YES & NO & NO & NO & NO & NO \\
		5\% & NO & NO & NO & NO & NO & NO \\
		10\% & NO & NO & NO & NO & NO & NO \\
		20\% & NO & NO & NO & NO & NO & NO \\
		40\% & NO & NO & NO & NO & NO & NO \\
		100\% & NO & NO & NO & NO & NO & NO \\
		\hline	
	\end{tabular}
\end{adjustbox}
    
	\caption{Results of \mvn tests on the noise vectors generated by neural network measurements in the \lmo dataset. We use the pretrained network from~\citep{pavlakos17icra-heatmap}.\label{tab:mvn-test-lmo}}
\end{table}

Table~\ref{tab:mvn-test-lmo} shows the test results. We can see that among the 36 tests performed, the data passed the test only once. This gives strong evidence that the noise vectors do not follow a Gaussian distribution, even after filtering potential outliers. Fig.~\ref{fig:mvn-test-lmo-perspective-plot} shows the perspective plots (top) and the Chi-square quantile-quantile (Q-Q) plots of the empirical density functions under different inlier ratios, in comparison to that of a Gaussian distribution. We can see that the empirical density functions deviate far away from a Gaussian distribution, and are difficult to characterize. This motivates the set membership estimation framework in Section~\ref{sec:intro}.

\begin{figure}[h]
    \hspace{-4mm}
	{\includegraphics[width=0.14\textwidth]{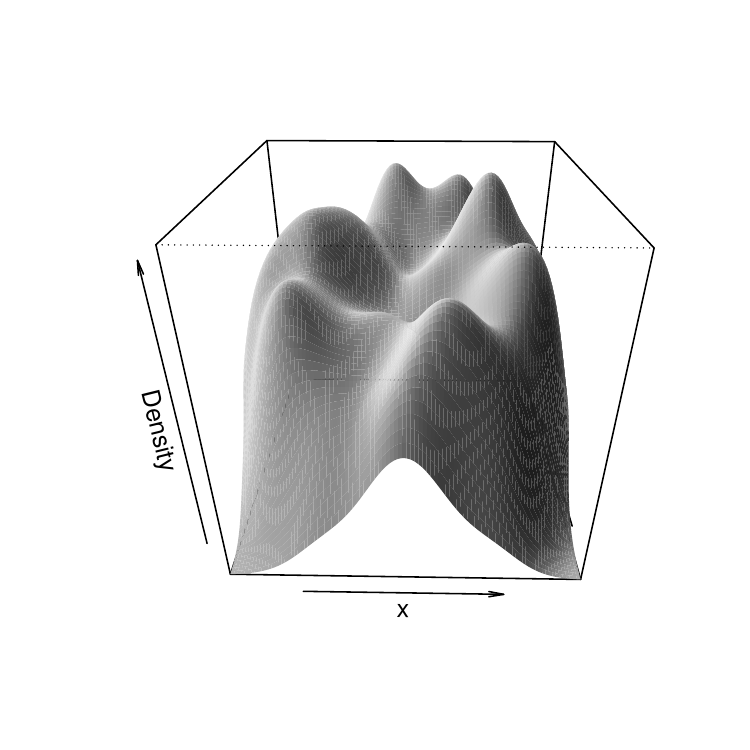}} 
	{\includegraphics[width=0.14\textwidth]{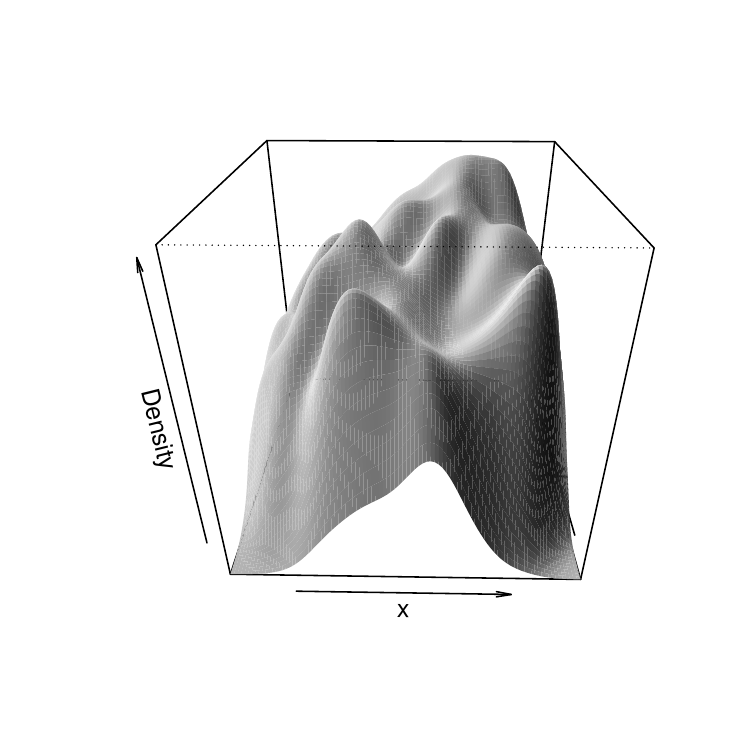}} 
	{\includegraphics[width=0.14\textwidth]{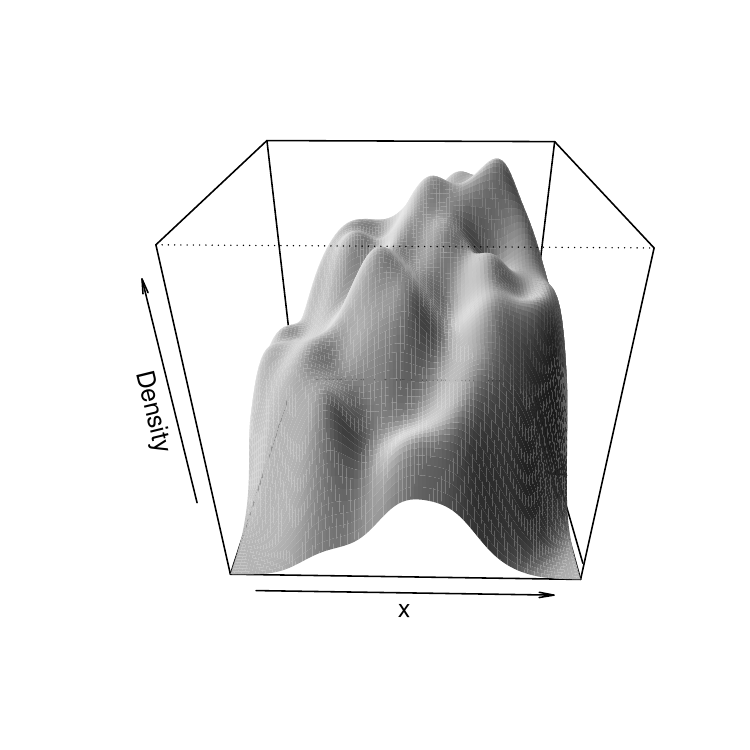}} 
	{\includegraphics[width=0.14\textwidth]{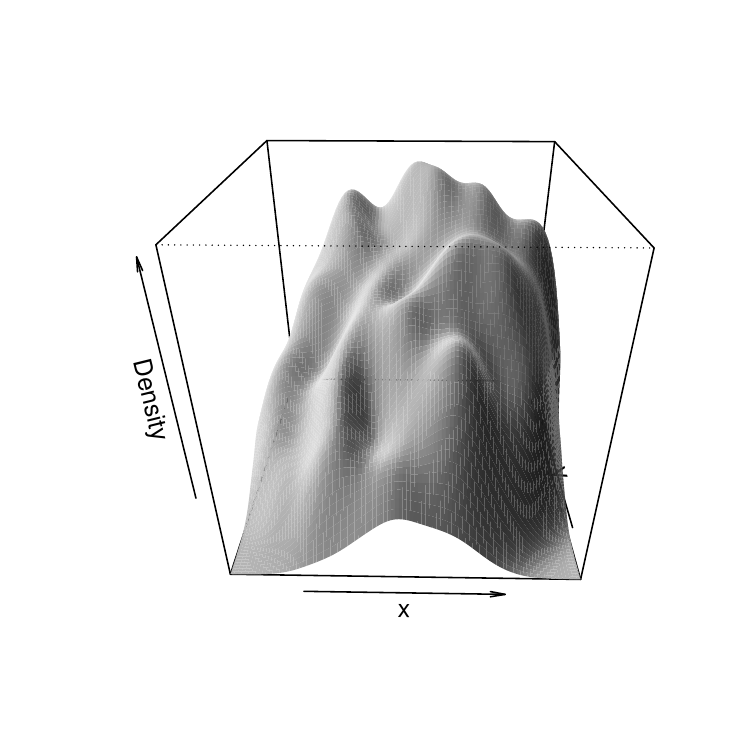}} 
	{\includegraphics[width=0.14\textwidth]{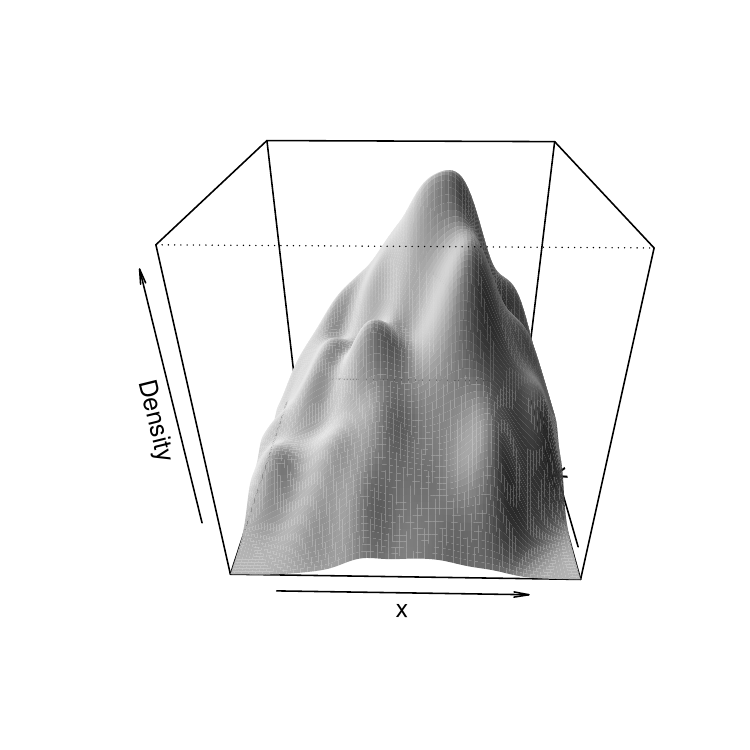}} 
	{\includegraphics[width=0.14\textwidth]{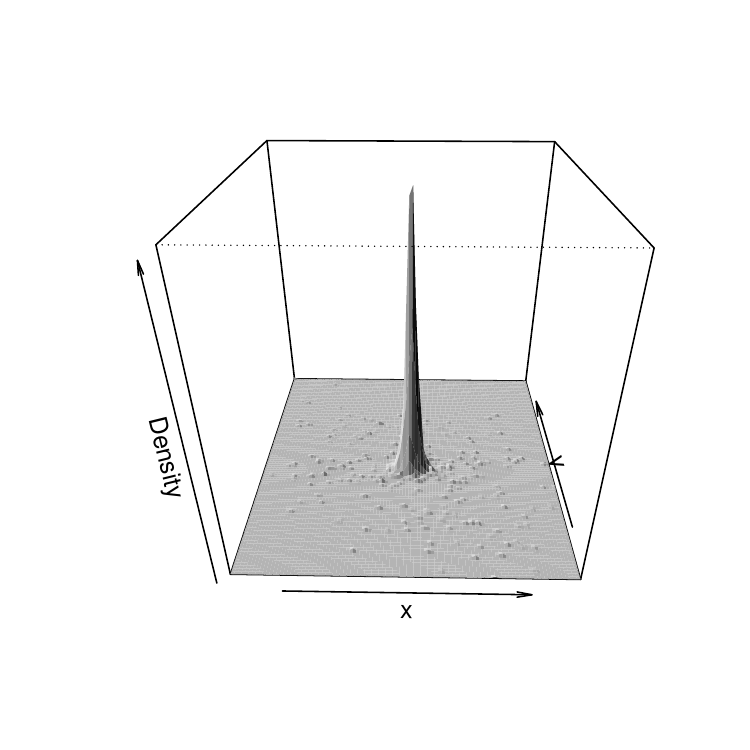}} 
	{\includegraphics[width=0.14\textwidth]{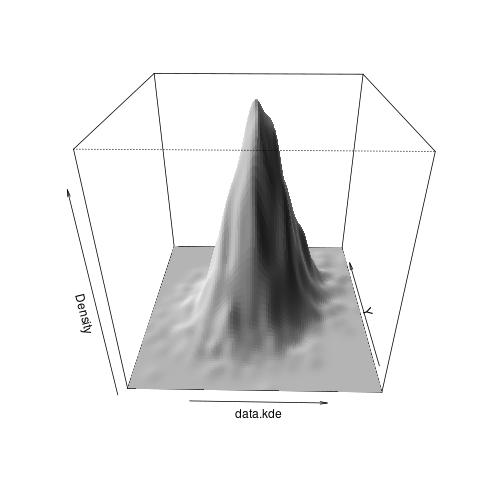}}\\
    \hspace{-4mm}
    {\includegraphics[width=0.14\textwidth]{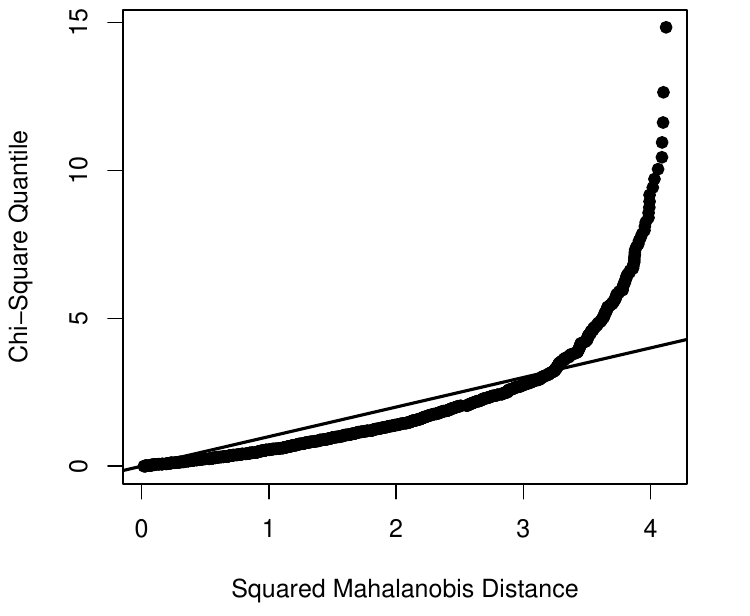}} \hspace{-2mm}
	{\includegraphics[width=0.14\textwidth]{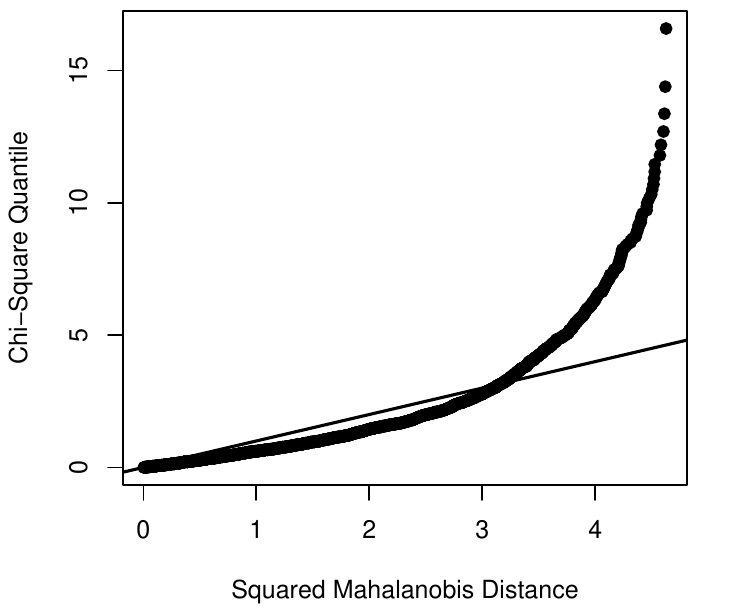}} \hspace{-2mm}
	{\includegraphics[width=0.14\textwidth]{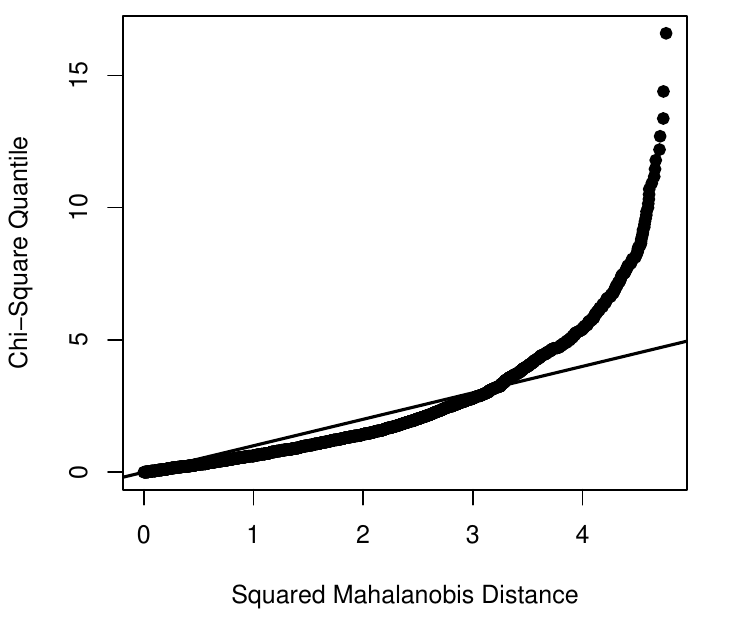}} \hspace{-2mm}
	{\includegraphics[width=0.14\textwidth]{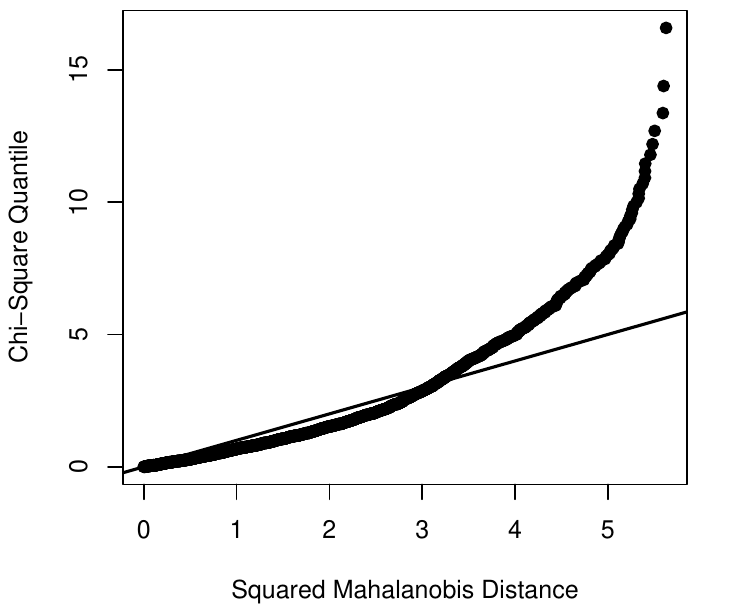}} \hspace{-2mm}
	{\includegraphics[width=0.14\textwidth]{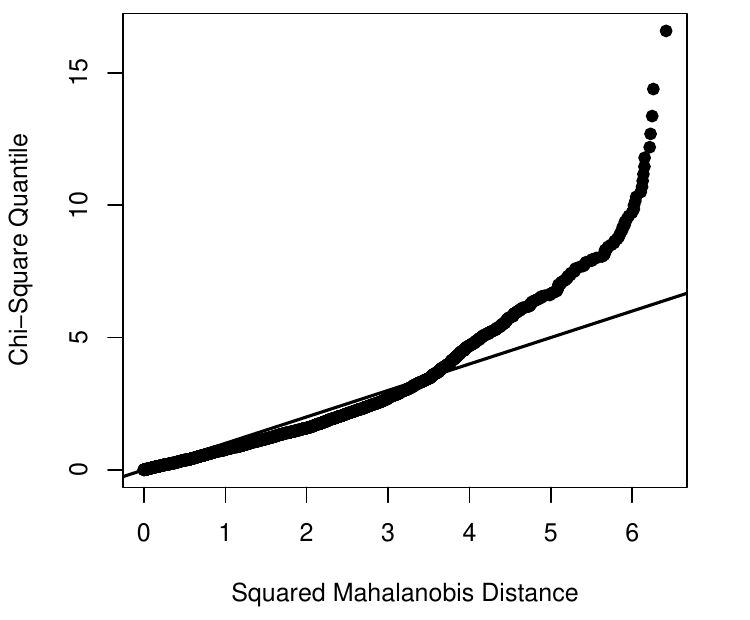}} \hspace{-2mm}
	{\includegraphics[width=0.14\textwidth]{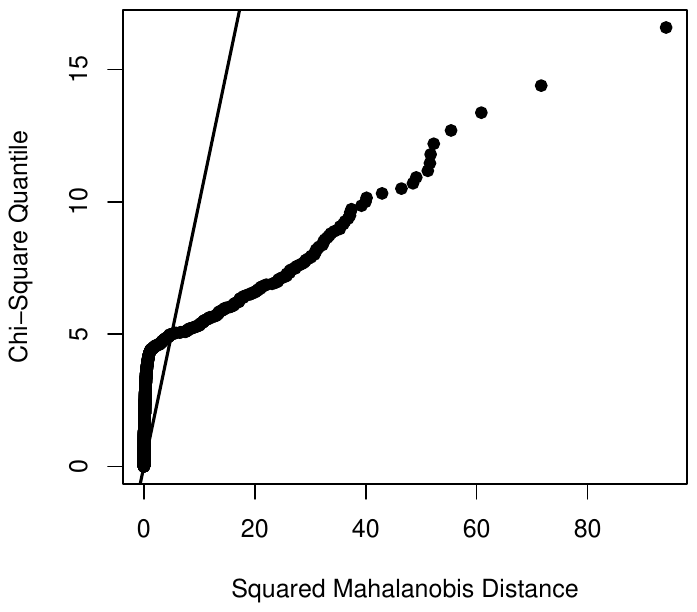}} \hspace{-2mm}
	{\includegraphics[width=0.14\textwidth]{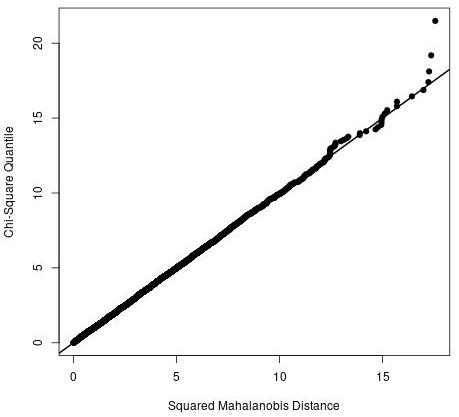}} 

	\caption{ Perspective plots (top) and Chi-square Q-Q plots (bottom) of the noise vectors generated by neural network measurements in the \lmo dataset. From left to right, they are respectively for $1\%$, $5\%$, $10\%$, $20\%$, $40\%$, $100\%$ noise vectors. The right-most graph shows the perspective plot for a Gaussian distribution that can be used as comparison.\label{fig:mvn-test-lmo-perspective-plot}}
\end{figure}


\section{SOS Relaxations and the Moment-SOS Hierarchy}
\label{app:sec:moment-sos-pop}

\subsection{Monomials, Polynomials, and Sum-of-Squares(SOS)}
\label{app:sec:moment-sos-pop:monomials-sos}

{\bf Monomials and Polynomials}.
Given $x = [x_1,x_2,\cdots,x_n]\in \mathbb{R}^n$, denote the monomial $\Pi_{i}x_i^{\alpha_i}$ as $x^\alpha$ where $\alpha = [\alpha_1,\cdots, \alpha_n]\in \mathbb{N}^n$. The degree of a monomial is defined as $\sum_{i = 1}^{n} \alpha_i$, and the degree of a polynomial, $\sum_{\alpha} c_\alpha x^\alpha$, is the maximum degree of all monomials. We use the notation $\mathbb{R}[x]_d$ to denote all the polynomials with degree less than or equal to $d$. 

{\bf Monomial Basis and Sum-of-Squares Polynomials}.
Denote the monomial basis $[x]_\kappa$ as the column vector of all monomials of degree up to $\kappa$. For example, if $n=2$ and $\kappa=2$, the corresponding monomial basis is $[x]_2 = [1;x_1;x_2;x_1^2;x_1x_2;x_2^2]$. If we define $s_n(d) :=\left( \substack{n+d \\ d} \right)$, the length of the monomial basis $[x]_\kappa$ is $s_n(\kappa)$.

A polynomial $g(x)$ is called a sum-of-squares (SOS) polynomial if it can be written as a sum of squares of polynomials, \ie $g(x) = \sum_{i = 1}^{m} q_i(x)^2$ for some $q_i(x)\in \mathbb{R}[x]$. Next proposition gives a necessary and sufficient condition for a polynomial to be SOS, which is more convenient for us to check whether a polynomial is a SOS polynomial.

\begin{proposition}[Condition for SOS Polynomials~\cite{lasserre2009moments}]\label{prop:sos-polynomial}
    A polynomial $g\in\mathbb{R}_{2d}[x]$ is a SOS polynomial if and only if there exists a symmetric and positive semidefinite matrix $Q\in\mathbb{R}^{s_n(d)\times s_n(d)}$ such that $g(x) = [x]_d\tran Q [x]_d$ where $[x]_d$ is the monomial basis.
\end{proposition}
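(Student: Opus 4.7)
The plan is to prove the two implications separately; the sufficient (``if'') direction is a direct factorization argument, while the necessary (``only if'') direction has a short but nontrivial degree-control step.

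For the sufficient direction, I would start from a PSD $Q \in \Real{s_n(d) \times s_n(d)}$ and use its spectral (or Cholesky) factorization $Q = V^\tran V$ with $V \in \Real{r \times s_n(d)}$. Letting $v_1^\tran, \dots, v_r^\tran$ denote the rows of $V$,
\[
g(x) \;=\; [x]_d^\tran Q [x]_d \;=\; \|V[x]_d\|^2 \;=\; \sum_{i=1}^r (v_i^\tran [x]_d)^2,
\]
and each $q_i(x) := v_i^\tran [x]_d$ lies in $\poly{x}$ with degree at most $d$, so $g \in \sos{x}$ by definition.

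For the necessary direction, suppose $g(x) = \sum_{i=1}^m q_i(x)^2$. The plan is to first argue $\deg{q_i} \leq d$ for every $i$, then expand each $q_i$ in the monomial basis as $q_i(x) = v_i^\tran [x]_d$ with $v_i \in \Real{s_n(d)}$, and finally set $Q := \sum_{i=1}^m v_i v_i^\tran$, which is manifestly symmetric and PSD and satisfies $g(x) = [x]_d^\tran Q [x]_d$.

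The main (and essentially only) obstacle is the degree-control step, because a priori the $q_i$'s could have degree exceeding $d$ with the high-degree terms cancelling across the sum. I would rule this out by contradiction: let $D := \max_i \deg{q_i}$ and, for each $i$ with $\deg{q_i} = D$, let $q_i^{\mathrm{top}}$ denote its homogeneous component of degree $D$. Then the degree-$2D$ homogeneous part of $g$ equals $\sum_{i:\deg{q_i}=D} (q_i^{\mathrm{top}})^2$. Since $\deg{g} \leq 2d$, if $D > d$ this sum must vanish identically. But a finite sum of squares of real polynomials is the zero polynomial only when each summand is zero (evaluating at any real point gives a sum of nonnegative reals equal to zero, forcing every term to vanish pointwise and hence identically), contradicting $q_i^{\mathrm{top}} \neq 0$. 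Hence $D \leq d$, and the remainder of the argument is a routine change of basis between each $q_i$ and its coefficient vector $v_i \in \Real{s_n(d)}$.
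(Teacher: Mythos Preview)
Your proof is correct in both directions: the factorization $Q = V^\tran V$ cleanly gives the ``if'' part, and the degree-control argument via the top homogeneous component is the standard way to handle the ``only if'' part. The paper itself does not supply a proof of this proposition; it simply states it with a citation to \cite{lasserre2009moments}, so there is no in-paper argument to compare against.
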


\subsection{Nonnegativity Certificate}
\label{app:sec:moment-sos-pop:nonnegativity}

 We consider a basic semialgebraic set  $\calS:= \{x \in \Real{n}\mid g_i(x) \geq 0, i = 1,\dots,l_g,h_j(x)= 0,j = 1,\dots,l_h\}$, where $g_i(x),h_j(x)$ are polynomials. How to describe the nonnegativity of a polynomial on $\calS$ is a crucial technique in the following proofs. We first introduce a property called \emph{Archimedean}.

\begin{assumption}[Archimedean Property]\label{assumption:compact-level-set}
    For the basic semialgebraic set $\calS$, there exists $u\in \mathbb{R}[x]$ of the form:
    $$u = \sigma_0 + \sum_{j = 1}^{l_g} \sigma_j g_j + \sum_{k = 1}^{l_h} \lambda_k h_k$$
    where $\sigma_j \in \mathbb{R}[x]$ are sum of squares and $\lambda_k \in \poly{x}$ are arbitrary polynomials, such that the level set $\{x \in\mathbb{R}^n\mid  u(x) \geq 0\}$ is compact.
\end{assumption}

It's notable that, if the set is compact, then by adding an enclosing ball constraint, the Archimedean property can be satisfied trivially. If $\calS$ is Archimedean, then for strictly nonnegative polynomials on $\calS$, we have the following theorem.

\begin{theorem}[Putinar's Positivstellensatz~\citep{putinar1993positive}]\label{thm:putinar}
    Let $\set{S}$ be a basic semialgebraic set defined by $\{x\mid g_i(x)\geq 0, i = 1,...,l_g, h_j(x) = 0, j = 1,...,l_h \}$, and $f$ be a polynomial in $\mathbb{R}[x]$. If assumption 
    \ref{assumption:compact-level-set} holds, then $f$ is strictly nonnegative on $\set{S}$ if and only if there exists a sequence of sum-of-squares polynomials $\{\sigma_i\}_{i=0}^{l_g}$ and polynomials $\{\lambda_j\}_{j = 1}^{l_h}$, such that $f = \sigma_0 + \sum_{i=1}^{l_g} \sigma_ig_i + \sum_{j=1}^{l_h} \lambda_jh_j$.
\end{theorem}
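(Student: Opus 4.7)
The plan is to split the biconditional into the easy direction (sufficiency) and the hard direction (necessity). For sufficiency, suppose $f = \sigma_0 + \sum_{i=1}^{l_g} \sigma_i g_i + \sum_{j=1}^{l_h} \lambda_j h_j$ for SOS $\sigma_i$ and polynomial $\lambda_j$. Then for any $x \in \calS$ we have $g_i(x) \geq 0$, $h_j(x) = 0$, and $\sigma_i(x) \geq 0$, so $f(x) \geq 0$ pointwise; this is one substitution, not an issue. All the work lies in necessity.

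For necessity, denote by $M \subseteq \poly{x}$ the quadratic module generated by $g_0 := 1, g_1,\dots,g_{l_g}$ together with the ideal $I$ generated by the $h_j$'s, i.e., $M = \{ \sigma_0 + \sum_i \sigma_i g_i + \sum_j \lambda_j h_j : \sigma_i \in \sos{x}, \lambda_j \in \poly{x}\}$. Working modulo $I$ in the quotient ring $A := \poly{x}/I$, $M$ descends to an Archimedean quadratic module (by Assumption \ref{assumption:compact-level-set}, a suitable $N - \Vert x \Vert^2$ belongs to $M$ modulo $I$, which is equivalent to Archimedeanness of $M$). I would then argue by contradiction: assume $f > 0$ on $\calS$ but $f \notin M$. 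Since $M$ is a convex cone in $\poly{x}$ and $f \notin M$, I invoke the algebraic version of the Hahn--Banach separation theorem (Eidelheit/Krein--Milman style, or Minkowski's theorem for convex cones in finite-dimensional slices followed by a limit) to produce a nonzero linear functional $L : \poly{x} \to \reals$ with $L(1)=1$, $L(M) \subseteq [0,\infty)$, and $L(f) \leq 0$.

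The crucial step is to upgrade $L$ to a measure supported on $\calS$. The Archimedean hypothesis is used precisely here: since $N \pm x_i^{2k} \in M$ for all $i,k$ and some $N = N(k)$, the functional $L$ is bounded on each finite-dimensional piece $\poly{x}_d$, which lets me extend $L$ continuously to the algebra of polynomials equipped with an appropriate sup-norm on the compact level set from Assumption \ref{assumption:compact-level-set}. Now $L$ satisfies $L(p^2) \geq 0$, $L(g_i p^2) \geq 0$, and $L(h_j p) = 0$ for all $p \in \poly{x}$. By Haviland's theorem (applied on the compact semialgebraic set $\calS$) combined with the GNS-type construction of characters from states on an Archimedean quadratic module, $L$ is represented as integration against a Borel probability measure $\mu$ supported on $\calS$. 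Consequently $L(f) = \int_\calS f \, d\mu > 0$ because $f$ is strictly positive on the compact set $\calS$, contradicting $L(f) \leq 0$.

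The main obstacle is the passage from the abstract functional $L$ to a genuine measure on $\calS$, i.e., the representation step. Hahn--Banach only yields a nonnegative linear functional on the quadratic module; the leap to a positive Borel measure supported on $\calS$ requires (i) the Archimedean bound to guarantee continuity and tightness, and (ii) a spectral/character argument (or direct invocation of Haviland's theorem) to identify the support with the zero set of the $h_j$ and positivity set of the $g_i$. Everything else is either substitution or routine convex-analytic manipulation; this representation step is the substantive content and the reason the Archimedean hypothesis cannot be dropped.
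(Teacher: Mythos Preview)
The paper does not prove Theorem~\ref{thm:putinar}; it is stated as a cited result from \cite{putinar1993positive} and used as a black box throughout (e.g., in the proofs of Theorems~\ref{thm:mee-sos} and~\ref{thm:grcc}). There is therefore no in-paper proof to compare your proposal against.

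That said, your sketch follows the standard functional-analytic route and is correct in outline. One caution on the representation step: invoking Haviland's theorem directly would require $L$ to be nonnegative on \emph{every} polynomial nonnegative on $\calS$, not merely on the quadratic module $M$, and that is essentially what you are trying to establish. The cleaner path is the one you also hint at: use the Archimedean bound to show $L$ is continuous for the sup-norm on the compact set furnished by Assumption~\ref{assumption:compact-level-set}, apply the Riesz representation theorem to obtain a measure on that set, and then use $L(g_i p^2)\geq 0$ and $L(h_j p)=0$ for all $p$ to force the support into $\calS$. An alternative, more algebraic proof (Jacobi--Prestel) avoids measures entirely by using Zorn's lemma to extend $L$ to a maximal state and showing that Archimedeanness forces multiplicativity, so the state is evaluation at a single point of $\calS$. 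Either route validates your identification of the representation step as the substantive content.
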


There exist other types of nonnegativity certificates, for which we refer the interested reader to~\cite{blekherman12siam-semidefinite}.

\subsection{Polynomial Optimization and the Moment-SOS Hierarchy}
\label{app:sec:moment-sos-pop:pop-lasserre}

We introduce basic concepts of the moment-SOS hierarchy which are used in the derivation in the main text, specifically Theorem~\ref{thm:grcc}. For further information, we refer to \cite{lasserre2009moments,blekherman12siam-semidefinite}.

{\bf Polynomial optimization}. Consider the following polynomial optimization problem (POP):
\begin{subequations}\label{prob:POP}
    \bea
\min_x &  f(x) \\
\subject & g_i(x) \geq 0 \quad i = 1\cdots l_g \\
        & h_j(x)  = 0 \quad j = 1\cdots l_h
\eea
\end{subequations}
where $g_i(x),h_j(x)$ are polynomials. Denote the basic semialgebraic set (feasible set of \eqref{prob:POP}) $\calS:= \{x \in \Real{n}\mid g_i(x) \geq 0, i = 1,\dots,l_g,h_j(x)= 0,j = 1,\dots,l_h\}$. Generally speaking, \eqref{prob:POP} is a nonconvex problem, and it is NP-hard to solve. However, we can relax problem \eqref{prob:POP} into a sequence of convex semidefinite programming (SDP) problems, which guarantees to monotonely converge to the optimal value of \eqref{prob:POP}. The relaxation is called Lasserre's moment-SOS hierarchy~\citep{lasserre2001global}. Next we will introduce this technique briefly.

First, we can recast the polynomial optimization problem as a infinite dimensional linear programming problem. 
\begin{subequations}\label{prob:POP-measure}
    \bea
    \min_{\mu\in \mathcal{M}(\calS)_+}& \int f(x) d\mu \\
    \subject& \int_\calS d\mu = 1
\eea 
\end{subequations}
where $\mathcal{M}(\calS)_+$ is the space of all finite Borel measures supported on $\calS$. Then we introduce a linear functional $L_z(f)$
$$
f(x) = \sum_{\alpha\in \mathbb{N}^n} f_\alpha x^\alpha \to L_z(f) = \sum_{\alpha \in \mathbb{N}^n} f_\alpha z_\alpha,
$$
where $\{z_\alpha,\alpha\in \mathbb{N}^n\}$ is sequence of real numbers (often known as the pseudomoment vector). For a sequence $\{z_\alpha,\alpha\in \mathbb{N}^n\}$, we say it has a representing measure on $\calS$ if there exists a measure $\mu$ supported on $\calS$ satisfying: $z_\alpha = \int_\calS x^\alpha d\mu$.
Then the optimization problem~\eqref{prob:POP-measure} is equivalent to:
\begin{subequations}\label{prob:POP-measure-z}
    \bea
    \min_z& L_z(f) \\
    \subject& z_0 = 1 \\
            & z \text{ has a representing measure on } \calS
\eea
\end{subequations}
If we introduce a new notation of moment matrix and localizing matrix, we can cast a necessary and sufficient condition for the existence of representing measure as infinite semidefinite constraints.

{\bf Moment matrix}.
    Given a $s_n(2r)$-sequence $z = (z_\alpha)$, the moment matrix $M_r(z)$ is defined as follows:
    $$M_r(z)(\alpha,\beta) = L_z(x^{\alpha+\beta}) = z_{\alpha+\beta}\quad \forall \alpha,\beta\in \mathbb{N}^n_r$$
    where $\mathbb{N}^n_r$ is the set of all multi-indices $\alpha\in \mathbb{N}^n$ with $|\alpha| = \sum_{i=1}^{n}\alpha_i \leq r$.

{\bf Localizing matrix}.
Given a polynomial $g(x)\in \mathbb{R}[x]$, the localizing matrix with respect to $z,g$ is defined as:
$$
M_r(g,z)(\alpha,\beta) = L_z(x^{\alpha+\beta}g(x)) = \sum_{\gamma\in \mathbb{N}^n} z_{\alpha+\beta+\gamma}g_\gamma.
$$


If the set satisfies the Archimedean property, we have the following theorem.

\begin{theorem}[Representing measure {\cite[Theorem 3.8]{lasserre2009moments}} ]
    Let $z = (z_\alpha)_{\alpha\in\mathbb{N}^n}$ be a given infinite sequence in $\mathbb{R}$ and assume the basic semialgebraic set $\calS$ is compact. Then $z$ has a representing measure on $\calS$ if and only if for all $r\in\mathbb{N}$
    \begin{align*}
        M_r(z) &\succeq 0 \\
        M_r(g_iz)&\succeq 0, \quad i = 1\cdots l_g \\
        M_r(h_jz)& = 0, \quad j = 1\cdots l_h
    \end{align*}
\end{theorem}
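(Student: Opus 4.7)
The plan is to establish the two directions separately. For the forward (necessity) direction, assume $z_\alpha = \int_\calS x^\alpha d\mu$ for some finite Borel measure $\mu$ on $\calS$. Given any vector $p \in \Real{s_n(r)}$ of coefficients of a polynomial $p(x) = \sum_{|\alpha| \leq r} p_\alpha x^\alpha$, I would compute the quadratic form $p\tran M_r(z) p = L_z(p(x)^2) = \int_\calS p(x)^2 d\mu \geq 0$, which gives $M_r(z) \succeq 0$. The same calculation with $g_i(x)$ inserted yields $p\tran M_r(g_i z) p = \int_\calS g_i(x) p(x)^2 d\mu \geq 0$ since $g_i \geq 0$ on $\calS$, and $p\tran M_r(h_j z) p = \int_\calS h_j(x) p(x)^2 d\mu = 0$ since $h_j \equiv 0$ on $\calS$. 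This direction is routine.

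For the backward (sufficiency) direction, suppose the moment/localizing matrix conditions hold for every $r \in \mathbb{N}$. My strategy is to view $z$ as defining a linear functional $L_z$ on $\poly{x}$ and to show it is nonnegative on polynomials nonnegative on $\calS$, then invoke the Riesz--Haviland theorem to produce the measure. I would first strengthen compactness to the Archimedean property, adding (if needed) a large ball constraint $N - \|x\|^2 \geq 0$ to the description of $\calS$, which leaves $\calS$ unchanged and makes Putinar's Positivstellensatz (Theorem~\ref{thm:putinar}) applicable. For any polynomial $f$ strictly positive on $\calS$, Putinar gives $f = \sigma_0 + \sum_i \sigma_i g_i + \sum_j \lambda_j h_j$ with $\sigma_i$ SOS. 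Applying $L_z$ and using Proposition~\ref{prop:sos-polynomial} to write each $\sigma_i = [x]_{d_i}\tran Q_i [x]_{d_i}$ with $Q_i \succeq 0$, I get $L_z(\sigma_i g_i) = \trace{Q_i M_{d_i}(g_i z)} \geq 0$ by the PSD conditions, while $L_z(\lambda_j h_j) = 0$ by the localizing equality conditions. Hence $L_z(f) \geq 0$ whenever $f > 0$ on $\calS$. A standard perturbation argument (replace $f \geq 0$ on $\calS$ by $f + \epsilon > 0$ and let $\epsilon \downarrow 0$, using that $L_z(1) = z_0$ is finite) extends this to all $f \geq 0$ on $\calS$.

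Finally, I would invoke the Riesz--Haviland theorem: a linear functional on $\poly{x}$ that is nonnegative on every polynomial nonnegative on a compact set $\calS$ admits a representing Borel measure supported on $\calS$. This produces the desired $\mu$ with $z_\alpha = \int_\calS x^\alpha d\mu$ for all $\alpha$.

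The main obstacle is the sufficiency direction, specifically two coupled points: (i) justifying that the \emph{finite}-order localizing PSD conditions are enough to make $L_z(p)$ nonnegative for every Putinar certificate (this is really just unpacking SOS certificates dimension by dimension, but care is needed because the degree of $\sigma_i$ depends on $f$), and (ii) the passage from strict to nonstrict positivity, which is where compactness (or Archimedeanness) enters essentially: on a noncompact set neither Putinar's certificate nor the Riesz--Haviland determinacy needed for existence of $\mu$ would be available, so the structural hypothesis cannot be dropped.
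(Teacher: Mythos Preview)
The paper does not supply its own proof of this statement; it is quoted verbatim as background from Lasserre's monograph and immediately used without argument. Your outline is the standard proof one finds there: necessity by direct integration, sufficiency by combining Putinar's Positivstellensatz with the Riesz--Haviland representation theorem. That strategy is sound.

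There is, however, one genuine gap in your treatment of the Archimedean hypothesis. You write that you would ``strengthen compactness to the Archimedean property, adding (if needed) a large ball constraint $N - \|x\|^2 \geq 0$ to the description of $\calS$.'' Adding a new inequality to the \emph{description} is not free: to run your argument you would then need the localizing condition $M_r\big((N-\|x\|^2)\,z\big) \succeq 0$ for all $r$, and this is \emph{not} among the hypotheses you are given. You cannot manufacture it from the PSD conditions on the original $g_i$'s alone. In Lasserre's actual Theorem~3.8 the Archimedean condition is assumed on the quadratic module (equivalently, a ball constraint is already one of the $g_i$), so the issue never arises; the paper's paraphrase ``$\calS$ is compact'' is slightly loose. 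Under bare compactness of $\calS$ with only the stated localizing conditions, Putinar does not apply directly, and Schm\"udgen's version would require localizing conditions on all \emph{products} $g_{i_1}\cdots g_{i_k}$, which are also not assumed. So either read the hypothesis as Archimedean (as Lasserre does) and drop the ``add a ball constraint'' step, or acknowledge that an additional argument is needed to bound the moments and recover Archimedean-type control from the given data.
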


This theorem, however, is not practical since it requires infinite number of constraints. In practice we solve the relaxation of the above theorem by only using constraints with a finite $r$.

Thus the final problem for Lasserre's hierarchy, with a relaxation order $\kappa$ is:
\begin{subequations}
    \bea
\min_z & L_z(f) \\
\subject& z_0 = 1\\
& M_\kappa(z) \succeq 0\\
& M_\kappa(g_iz)\succeq 0, \quad i = 1\cdots l_g \\
& M_\kappa(h_jz) = 0, \quad j = 1\cdots l_h
\eea
\end{subequations}
Using monomial basis $[x]_\kappa$, which is the column vector of all monomials of degree up to $\kappa$, the last part constraints can also be interpreted as:
$$
L_z(h_j [\theta]_{2\kappa-\deg{h_j}}) = 0, \quad j=1,\dots,l_h.
$$


\section{Proof of Theorem~\ref{thm:mee-sos}}
\label{app:sec:proof-mee-sos}
\begin{proof}
    First let's prove (i). We show that the following problem is equivalent to \eqref{eq:mee}
    \begin{subequations}\label{eq:mee-equiv}
        \bea 
        \max_{E,b,c} & \log\det E  \\
        \subject & 1 - (\xi\tran E \xi + 2 b\tran \xi + c) \geq 0, \ \forall \theta \in \calS \\
         & c - b\tran E^{-1}b = 0
        \eea
    \end{subequations}
    because with the last equality, we can see that the first inequality constraint is equivalent to:
    $$
    (\xi + E^{-1}b)\tran E(\xi + E^{-1}b) \leq 1.
    $$
    With $\mu := - E\inv b$, problem~\eqref{eq:mee-equiv} recovers \eqref{eq:mee}.

    Next, we only need to show the following problem is equivalent to problem~\eqref{eq:mee-equiv}.
    \begin{subequations} \label{prob: ellip-equivalent}
    \bea 
    \max_{E,b,c} & \log\det E  \\
    \subject & 1 - (\xi\tran E \xi + 2 b\tran \xi + c) \geq 0, \ \forall \theta \in \calS \label{cons: sos-enclosing-2}\\
     & \begin{bmatrix}
        E & b \\ b\tran & c 
     \end{bmatrix} \succeq 0 \label{cons:sos-lifting-2}
    \eea
    \end{subequations}
    The first constraint \eqref{cons: sos-enclosing-2} indicates enclosure. We can equivalently write it as:
    $$
    (\xi + E^{-1}b)\tran E(\xi + E^{-1}b) \leq 1 - (c - b\tran E^{-1}b), \ \forall \theta \in \calS
    $$
    By $E$ is positive definite, we can conclude that $c - b\tran E^{-1}b < 1$.
    The second constraint~\eqref{cons:sos-lifting-2} implies $c - b\tran E\inv b \geq 0$ by the Schur complement theorem. We will show that in fact we always have $c - b\tran E\inv b = 0$ at the optimal solution of \eqref{prob: ellip-equivalent}. 
    
    Consider the ellipsoid representation $\xi\tran E \xi + 2 b\tran \xi + c \leq 1$. Note that one ellipsoid can correspond to different pairs of $(E,b,c)$ due to 
    $$
    \xi\tran E \xi + 2 b\tran \xi + c \leq 1 \Leftrightarrow m(\xi\tran E \xi + 2 b\tran \xi + c ) \leq m, \forall m > 0.
    $$
    All the equivalent class can be written as: $(mE,mb,mc - m + 1))$ for any $m>0$.

    Suppose now we have a feasible solution $(E_0,b_0,c_0)$ which satisfies $c_0 - b_0\tran E_0^{-1}b_0 > 0$. We claim that this solution must not be optimal, because we can construct a better solution by maintaining the same ellipsoid, as follows. 
    
    For $m > 1$, consider $(E_1,b_1,c_1) = (mE_0,mb_0,mc_0 - (m-1))$. Clearly, if $(mE_0,mb_0,mc_0 - (m-1))$ is a feasible solution, then it will always be a better solution than $(E_0,b_0,c_0)$ due to $\log\det (mE_0) > \log\det (E_0)$ when $m >1$.

    Now we show that indeed we can make $(E_1,b_1,c_1)$ feasible for \eqref{prob: ellip-equivalent}. It suffices to construct $(E_1,b_1,c_1)$ feasible for \eqref{cons:sos-lifting-2}. To do so, we write 
    \begin{align*}
         c_1 - b_1\tran E_1^{-1}b_1 &= mc_0 - (m-1) - (m b_0)\tran (mE_0)^{-1}(mb_0) \\
         &= m(c_0 - b_0\tran E_0^{-1}b_0) - (m-1) \\
         &= (m-1)(c_0 - b_0\tran E_0^{-1}b_0 - 1) + c_0 - b_0\tran E_0^{-1}b_0.
    \end{align*}
    Note that $c_0 - b_0\tran E_0^{-1}b_0 > 0$ by our hypothesis, and $c_0 - b_0\tran E_0^{-1}b_0 - 1 < 0$ by enclosure. Therefore, we can always choose some $m-1>0$ that is sufficiently small such that $c_1 - b_1\tran E_1^{-1}b_1 \geq 0$ and hence $(E_1,b_1,c_1)$ can be made feasible. 
    
    Therefore, any optimal solution of problem \eqref{prob: ellip-equivalent} must satisfy \eqref{eq:mee-equiv}. 

    Then we leverage the sum-of-squares technique to relax the problem. By substituting constraints \eqref{cons: sos-enclosing-2} with \eqref{eq:mee-sos-equality},\eqref{eq:mee-sos-sos-multiplier} and \eqref{eq:mee-sos-poly-multiplier}. Because of the certificate of nonnegativity (\cf Section~\ref{app:sec:moment-sos-pop:nonnegativity}), we can always get an enclosing ellipsoid. Also from the derivation above, we shall always have $c - b\tran E^{-1}b = 0$. Therefore, the ellipsoid we get can be written as: $(\xi + E^{-1}b)\tran E(\xi + E^{-1}b) \leq 1$.

    Then we prove (ii). First, it's easy to see that when $\kappa$ increases, the feasible region of the problem~\eqref{eq:mee-sos} gets larger, so the optimal value of the optimization problem will get larger, which means that Vol($\calE$) decreases. 
    
    Suppose the optimal solution of the minimum enclosing ellipsoid problem \eqref{eq:mee} is: $\calE^\star\coloneqq (\xi-\mu^\star)\tran E^\star (\xi-\mu^\star) \leq 1$. Then for any sufficiently small $\varepsilon > 0$ making sure that $E^\star - \varepsilon \eye_n\succeq 0$, we can have $1 - (\xi-\mu^\star)\tran (E^\star - \varepsilon \eye_n)(\xi-\mu^\star)$ strictly positive on $\calS$. Then leveraging the Putinar's Positivstellensatz (\cf Section~\ref{app:sec:moment-sos-pop:nonnegativity}), we know that there exists sum of squares polynomials $\sigma_i(\theta)$ satisfies:
    $$
    1 - ((\xi-\mu^\star)\tran (E^\star - \varepsilon \eye_n)(\xi-\mu^\star)) =  \sum_{i=0}^{l_g} \sigma_i(\theta) g_i(\theta) + \sum_{j=1}^{l_h} \lambda_j(\theta) h_j(\theta),
    $$ 
    which shows that the point $(E^\star - \varepsilon \eye_n, \mu^\star)$ is a feasible point of the problem \eqref{eq:mee-sos} for some $\kappa$. 

    From the continuity property of the determinant function, we can know that $ \det(A - \varepsilon \eye_n) = \det(A) + O(\varepsilon)$. By letting $\varepsilon\to 0$, we can show that the optimal value of the problem \eqref{eq:mee-sos} converges to the optimal value of the problem \eqref{eq:mee} when $\kappa \to \infty$.
\end{proof}

\section{Certificate of MEE}
\label{app:sec:proof-certificate-mee}

We present a result to check the convergence of the SOS-based MEE algorithm in Theorem~\ref{thm:mee-sos} when the set $\calS$ is convex.

\begin{proposition}[Certificate of MEE]
    \label{prop:certificate-mee}
    Let $\calS_\xi$ be convex, and $(E_\star,b_\star,c_\star)$ be an optimizer of \eqref{eq:mee-sos} with certain $\kappa$. Consider the following (nonconvex) polynomial optimization problem 
    \bea\label{eq:pop-certificate}
    \min_{\xi \in \calS_\xi} & 1 - (\xi\tran E_\star \xi + 2 b_\star\tran  \xi + c_\star).
    \eea 
    Denote the optimal value of the $\nu$-th order moment-SOS relaxation of \eqref{eq:pop-certificate} as $\rho^\star_\nu$ (\cf Section~\ref{app:sec:moment-sos-pop:pop-lasserre}). Suppose at a certain relaxation order $\nu$, we have (i) $\rho^\star_\nu = 0$, (ii) a finite number of global minimizers of \eqref{eq:pop-certificate} can be extracted as $\{ \xi_{\star}^1,\dots,\xi_\star^K \}$, and (iii) there exist positive $(\alpha_1,\dots,\alpha_K)$ such that 
    \bea \label{eq:contact-points}
    \sum_{k=1}^K \alpha_k ( \xi_{\star}^k + E_\star^{-1}b_\star) = 0, \quad \sum_{k=1}^K \alpha_k ( \xi_{\star}^k + E_\star^{-1}b_\star)( \xi_{\star}^k + E_\star^{-1}b_\star)\tran = E_\star^{-1},
    \eea 
    then $\calE = \{ \xi \in \Real{d} \mid \xi\tran E_\star \xi + 2 b_\star\tran \xi + c_\star \leq 1 \}$ is the unique solution of \eqref{eq:mee}.
\end{proposition}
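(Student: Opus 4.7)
The plan is to reduce the proposition to the L\"owner--John ellipsoid theorem, which characterizes the minimum-volume enclosing ellipsoid of a convex body $K$ as the unique ellipsoid $\calE \supseteq K$ for which there exist contact points $u_1,\dots,u_K \in K \cap \partial \calE$ and strictly positive weights $\alpha_k$ satisfying $\sum_k \alpha_k (u_k - \mu) = 0$ and $\sum_k \alpha_k (u_k - \mu)(u_k - \mu)\tran = E^{-1}$, where $(\mu,E)$ parametrizes $\calE$. The goal is to recognize the three hypotheses of the proposition as producing exactly such a balanced configuration of contact points for the ellipsoid extracted from \eqref{eq:mee-sos}.

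First, Theorem~\ref{thm:mee-sos}(i) already furnishes the enclosure $\calS_\xi \subseteq \calE$ and the normal form $\calE = \{\xi \in \Real{d} \mid (\xi - \mu_\star)\tran E_\star (\xi - \mu_\star) \leq 1\}$ with $\mu_\star = -E_\star^{-1} b_\star$. Consequently the polynomial $p(\xi) := 1 - (\xi\tran E_\star \xi + 2 b_\star\tran \xi + c_\star)$ is non-negative on $\calS_\xi$, so the true minimum $\rho^\star$ of \eqref{eq:pop-certificate} is non-negative; the relaxation bound $\rho^\star_\nu \leq \rho^\star$ together with hypothesis (i) therefore pins $\rho^\star = 0$. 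Hypothesis (ii) then delivers finitely many honest global minimizers $\xi_\star^1,\dots,\xi_\star^K \in \calS_\xi$ via the standard Henrion--Lasserre flat-extension extraction, and each satisfies $p(\xi_\star^k) = 0$, which (after completing the square using the equality $c_\star - b_\star\tran E_\star^{-1} b_\star = 0$ established inside the proof of Theorem~\ref{thm:mee-sos}) is equivalent to $(\xi_\star^k - \mu_\star)\tran E_\star (\xi_\star^k - \mu_\star) = 1$. Hence every $\xi_\star^k$ lies on $\calS_\xi \cap \partial \calE$ and is a genuine contact point.

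At this stage, the two identities in hypothesis (iii), read with $u_k = \xi_\star^k$ and $\mu = \mu_\star$, are precisely the first- and second-moment balancing conditions of the L\"owner--John characterization, with right-hand side equal to $E_\star^{-1}$. Combined with convexity of $\calS_\xi$ and the enclosure already verified, invoking L\"owner--John immediately yields that $\calE$ is the unique minimum-volume ellipsoid enclosing $\calS_\xi$, hence the unique solution of \eqref{eq:mee}.

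The main obstacle I anticipate is not the algebra but the bookkeeping around invoking the right form of L\"owner--John: the theorem must be used in its general (non-centrally-symmetric) formulation in the $(\mu,E)$-parametrization employed here, and one must justify that the Henrion--Lasserre extraction really produces atoms with positive weights (so that the $\alpha_k$ from (iii) can be identified with genuine masses supported on $\calS_\xi \cap \partial \calE$). Once those ingredients are in place, the proof collapses into a direct pattern-matching of the hypotheses against the theorem's assumptions.
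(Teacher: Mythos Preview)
Your approach is essentially identical to the paper's: both reduce directly to John's theorem (stated in the paper as Lemma~\ref{lem:johnstheorem}) and recognize that the extracted global minimizers of \eqref{eq:pop-certificate} are precisely the contact points needed to verify the L\"owner--John characterization, after which hypothesis~(iii) matches the required moment conditions. One small slip to fix: the sentence claiming that ``$\rho^\star_\nu \leq \rho^\star$ together with hypothesis (i) therefore pins $\rho^\star = 0$'' does not close the sandwich (both inequalities give $\rho^\star \geq 0$); it is hypothesis~(ii)---the successful extraction of feasible points achieving value zero---that supplies $\rho^\star \leq 0$, and you do invoke it in the very next line.
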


Before presenting the proof of Proposition~\ref{prop:certificate-mee}, we briefly describe the intuition. By~\cite[Theorem 3.2.5]{xie2016thesis}, $\calE$ is the minimum enclosing ellipsoid of $\calS_\xi$ if and only if there exists a finite number of \emph{contact points} at the intersection of $\calE$ and $\calS_\xi$ that satisfy \eqref{eq:contact-points}. Since the contact points lie on the boundary of $\calE$, they must attain zero value in \eqref{eq:pop-certificate}, and hence must be its global minimizers. Therefore, we can use the moment-SOS hierarchy to find the contact points. Section~\ref{app:sec:exp:toy-examples} gives a concrete numerical example of computing the certificate of convergence.

\begin{proof}
We first present John's theorem of minimum enclosing ellipsoid.

\begin{lemma}[John's Theorem]\label{lem:johnstheorem}
    Let $C\subseteq\Real{n}$ be a convex body and $C\subseteq B_H(\bar{x},1)$, where $B_H(\bar{x},1)\coloneqq \{x\in \Real{n}| (x-\bar{x})\tran H (x-\bar{x})\leq 1\},H \succ 0$. Then the following statements are equivalent:
    \begin{itemize}
        \item $B_H(\bar{x},1)$ is the minimum volume ellipsoid enclosing $C$.
        \item There exists some $r\in\left\{n,n+1,\cdots,\frac{n(n+3)}{2}\right\}$ contact points $p_1,\cdots,p_r\in \partial(C)\cap \partial(B_H(\bar{x},1))$ and $\alpha\in \mathbb{R}^r_{++}$ such that
        \bea \label{eq:johnstheorem-contact-points}
        H^{-1} = \sum_{i=1}^{r} \alpha_i(p_i-\bar{x})(p_i-\bar{x})\tran \text{ and } \sum_{i=1}^{r}\alpha_i(p_i - \bar{x}) = 0.
        \eea
    \end{itemize}
\end{lemma}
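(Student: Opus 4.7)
The plan is to derive John's theorem by showing that the conditions in~(\ref{eq:johnstheorem-contact-points}) are precisely the Karush--Kuhn--Tucker (KKT) conditions of a strictly convex reformulation of the minimum volume enclosing ellipsoid problem, and then using a Carath\'eodory reduction to bound the number of contact points. First I would reparametrize the ellipsoid via the affine map $z \mapsto Az+d$ with $A \succ 0$ and $d \in \Real{n}$, so that $B_H(\bar{x},1) = \{z : \|Az+d\| \leq 1\}$ where $A$ is the positive-definite square root of $H$ and $d = -A\bar{x}$. The MVEE problem becomes
\bea
\min_{A \succ 0,\, d \in \Real{n}} \ -\log \det A \quad \text{subject to} \quad \|Ax+d\|^2 \leq 1, \ \forall x \in C,
\eea
which is a strictly convex semi-infinite program whose unique minimizer corresponds bijectively to $(H,\bar{x})$.

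For the forward direction ($\Rightarrow$), I would apply standard KKT / saddle-point theory for convex semi-infinite programs: since $C$ is a convex body and one can always strictly shrink any enclosing ellipsoid to obtain a Slater point, strong duality holds and there exists a nonnegative Radon measure $\mu^\star$ on $C$ with support in the active set $\partial(C) \cap \partial(B_H(\bar{x},1))$ satisfying the stationarity conditions obtained by differentiating the Lagrangian with respect to $d$ and $A$:
\bea
\int (Ax+d)\, d\mu^\star(x) = 0, \qquad A^{-1} = \int (Ax+d)\, x^\tran\, d\mu^\star(x).
\eea
Substituting $p=x$ and $p - \bar{x} = A^{-1}(Ap+d)$, symmetrizing the second identity, and conjugating by $A^{-1}$ on both sides recovers exactly the two equalities in~(\ref{eq:johnstheorem-contact-points}), with $d\mu^\star$ playing the role of the coefficients $\alpha_i$. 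To reduce to finitely many contact points, I would invoke Carath\'eodory's theorem on the convex cone generated by the rank-one atoms $\{(p-\bar{x},\,(p-\bar{x})(p-\bar{x})\tran) : p \in \partial C \cap \partial B_H(\bar{x},1)\}$ inside $\Real{n} \oplus \sym{n}$, a space of dimension $n + \tfrac{n(n+1)}{2} = \tfrac{n(n+3)}{2}$; this yields $r \leq \tfrac{n(n+3)}{2}$ atoms. The lower bound $r \geq n$ follows because $\{p_i - \bar{x}\}$ must span $\Real{n}$, as otherwise $\sum_i \alpha_i (p_i-\bar{x})(p_i-\bar{x})\tran$ would be rank-deficient and could not equal the positive-definite matrix $H^{-1}$.

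For the reverse direction ($\Leftarrow$), the hypothesis is exactly a primal--dual KKT certificate for the convex program above: assign the atomic measure $\mu = \sum_i \alpha_i \delta_{p_i}$; since each $p_i$ lies in $\partial(C) \cap \partial B_H(\bar{x},1)$, primal feasibility and complementary slackness are automatic, and~(\ref{eq:johnstheorem-contact-points}) reproduces the stationarity identities derived above. By strict convexity of $-\log\det A$ in $A$ and convexity of the feasible set in $(A,d)$, the KKT point is the unique global minimizer, so $B_H(\bar{x},1)$ is the MVEE. The main technical obstacle will be rigorously justifying existence of the Lagrange multiplier measure $\mu^\star$ for the continuum of constraints indexed by $x \in C$ (this uses a generalized Fritz John / Hettich--Kortanek theorem for semi-infinite programs and the Slater point mentioned above), together with the bookkeeping needed to translate the stationarity conditions from $(A,d)$-coordinates into the clean form in~(\ref{eq:johnstheorem-contact-points}) in $(H,\bar{x})$-coordinates without spurious factors.
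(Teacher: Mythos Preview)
The paper does not actually prove this lemma; it is stated as a classical result (John's theorem, attributed to \cite{xie2016thesis} and \cite{henk12-lowner} in the main text) and then invoked as a black box in the proof of Proposition~\ref{prop:certificate-mee}. So there is no ``paper's own proof'' to compare against.

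That said, your approach is the standard modern derivation of John's condition and is essentially correct: reparametrize the ellipsoid affinely, write the MVEE as a convex semi-infinite program, extract a multiplier measure supported on the contact set via KKT/strong duality, translate stationarity back into the $(H,\bar{x})$ coordinates, and use conic Carath\'eodory in $\Real{n}\oplus\sym{n}$ to cap $r$ at $n(n+3)/2$. The lower bound $r\ge n$ from full rank of $H^{-1}$ and the sufficiency direction via convex KKT are both fine. Two small slips to tidy up: (i) to produce a Slater point you want to \emph{enlarge} the enclosing ellipsoid (equivalently, shrink $A$), not ``strictly shrink'' it; (ii) the objective $-\log\det A$ is strictly convex in $A$ but does not involve $d$, so joint strict convexity in $(A,d)$ fails literally --- uniqueness of the minimizer still holds, but it needs the standard separate argument (e.g., averaging two distinct optimal ellipsoids and using strict concavity of $\det^{1/n}$), not the one-line appeal you give. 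Neither issue affects the validity of the plan.
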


Then we leverage this Lemma to prove Proposition~\ref{prop:certificate-mee}.

    From Theorem~\ref{thm:mee-sos}, the bounding ellipsoid of $\calS_\xi$ is $\calE = (\xi + E_\star^{-1}b_\star)\tran E_\star(\xi + E_\star^{-1}b_\star) \leq 1$. Based on Lemma~\ref{lem:johnstheorem}, if the ellipsoid is the minimum volume enclosing ellipsoid, there must exists contact points satisfying~\eqref{eq:johnstheorem-contact-points}. Then the optimal value of the optimization problem~\eqref{eq:pop-certificate} is $0$ and the solutions are the contact points. 
    
    Suppose by solving the optimization problem~\eqref{eq:pop-certificate} at certain relaxation order $\nu$, we get the optimal value $\rho^\star_\nu = 0$, then the solutions are the contact points. Suppose we can extract the optimal solutions $\xi_{\star}^1,\dots,\xi_\star^K \in \partial(\calS_\xi)\cap \partial(\calE)$ using the result in~\cite{henrion05-detecting},
    Then from John's theorem, if there exist $\alpha\in \mathbb{R}^K_{++}$ such that,
    \bea\label{eq:johnstheorem-contact-points-restate}
    \sum_{k=1}^K \alpha_k (\xi_{\star}^k + E_\star^{-1}b_\star) = 0, \quad \sum_{k=1}^K \alpha_k (\xi_{\star}^k + E_\star^{-1}b_\star)(\xi_{\star}^k + E_\star^{-1}b_\star)\tran = E_\star^{-1}
    \eea
    the ellipsoid $\calE$ is the minimum volume enclosing ellipsoid. Note that \eqref{eq:johnstheorem-contact-points-restate} is a system of linear equations in $\alpha$ and can be easily solved.
\end{proof}

\section{Proof of Theorem~\ref{thm:grcc}}
\label{app:sec:proof-grcc}


We first show that there exists $R > 0$ such that the optimization problem~\eqref{eq:gcc} is equivalent to the following problem
\begin{equation}\label{eq:equ-problem}
    \eta^\star =  \min_{\mu \in B_R(O)} \max_{\xi \in \calS_\xi} \quad \Vert \mu - \xi  \Vert_Q^2,
\end{equation} 
where $B_R(O)$ denotes a ball of radius $R$ centered at $O$.

\begin{theorem}[Equivalence of the problem]\label{thm:equivalence_problem}
    There exists $R>0$ such that the original problem
    \begin{equation*}
        \eta^\star =  \min_{\mu \in \mathbb{R}^{d}} \max_{\xi \in \calS_\xi} \quad \Vert \mu - \xi  \Vert_Q^2
    \end{equation*} 
    is equivalent to the problem:
    \begin{equation*}
        \eta^\star =  \min_{\mu \in B_R(O)} \max_{\xi \in \calS_\xi} \quad \Vert \mu - \xi  \Vert_Q^2
    \end{equation*} 
\end{theorem}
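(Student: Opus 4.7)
The plan is to show that the outer objective $f(\mu) := \max_{\xi \in \calS_\xi}\|\mu - \xi\|_Q^2$ is coercive in $\mu$, so that any minimizer over $\mathbb{R}^d$ must lie in some bounded Euclidean ball $B_R(O)$, after which restricting the feasible set to $B_R(O)$ cannot change the optimal value.

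First, I would argue that $\calS_\xi$ is nonempty and compact. Nonemptyness is assumed (otherwise both problems are trivially equal, with value $-\infty$ under the usual convention). Compactness follows because $\calS$ is Archimedean, hence compact as a basic semialgebraic set in $\Real{n}$, and $\xi = P\theta$ is a continuous (linear) map; thus $\calS_\xi = P(\calS)$ is compact. Consequently, the inner maximum $f(\mu) = \max_{\xi\in\calS_\xi}\|\mu-\xi\|_Q^2$ is attained and defines a continuous function of $\mu$ (even convex, as a pointwise maximum of convex quadratics).

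Next, I would establish coercivity. Pick any fixed $\xi_0 \in \calS_\xi$ and set $D := \max_{\xi \in \calS_\xi}\|\xi_0 - \xi\|_Q$, which is finite by compactness. Then the constant choice $\mu = \xi_0$ attains $f(\xi_0) = D^2$, so the optimal value $\eta^\star$ of the unrestricted problem satisfies $\eta^\star \leq D^2$. On the other hand, for any $\mu \in \Real{d}$, evaluating the inner maximum at $\xi_0$ gives $f(\mu) \geq \|\mu - \xi_0\|_Q^2$. By the reverse triangle inequality in the $Q$-norm,
\begin{equation*}
\|\mu - \xi_0\|_Q \;\geq\; \|\mu\|_Q - \|\xi_0\|_Q.
\end{equation*}
Combined with the norm-equivalence bound $\|\mu\|_Q \geq \sqrt{\lambda_{\min}(Q)}\,\|\mu\|$, this shows that $f(\mu) \to \infty$ as $\|\mu\| \to \infty$.

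Finally, choose $R$ large enough that $\|\mu\| > R$ forces $\|\mu - \xi_0\|_Q > D$, e.g.
\begin{equation*}
R \;:=\; \frac{D + \|\xi_0\|_Q}{\sqrt{\lambda_{\min}(Q)}} + 1.
\end{equation*}
Then for every $\mu \notin B_R(O)$ we have $f(\mu) > D^2 \geq \eta^\star$, so such $\mu$ cannot be optimal. Hence the infimum of $f$ over $\Real{d}$ is attained (existence follows from continuity of $f$ plus compactness of $B_R(O)$) and coincides with the infimum of $f$ over $B_R(O)$, proving equivalence. I do not foresee a real obstacle here; the only care needed is to use norm equivalence to convert the $Q$-norm ball that naturally arises from the coercivity estimate into a standard Euclidean ball $B_R(O)$, and to handle the degenerate case $\calS_\xi = \emptyset$ separately.
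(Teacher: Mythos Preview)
Your proof is correct and follows essentially the same idea as the paper: show that any minimizer of the outer problem must lie in some fixed Euclidean ball. The paper's argument is slightly more terse---it cites an external result for existence (and uniqueness) of the Chebyshev center and then bounds $\|\mu^\star\|$ directly by a single triangle inequality, whereas you derive existence yourself from coercivity of $f(\mu)=\max_{\xi\in\calS_\xi}\|\mu-\xi\|_Q^2$ and give an explicit $R$ via $\lambda_{\min}(Q)$; your version is therefore more self-contained and more careful about converting between the $Q$-norm and the Euclidean norm, but the underlying mechanism is the same.
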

    
\begin{proof}
    By \cite{sain2016chebyshev}, the Chebyshev center of the bounded set in $\mathbb{R}^n$ exists and is unique. We only need to show that the solution lies inside some ball $B_R(O)$. 
    
    Denote the Chebyshev center as $\mu^\star$ and the corresponding radius as $\eta^\star$. Due to the assumption of boundedness, there exists $R'$ such that for all $\xi \in \calS_\xi$, 
    $$
    \|\mu^* - O\|\leq \|\mu^* - s\| + \|s - O\|\leq \eta^\star + R',
    $$
    where $s$ is the point in $\calS_\xi$ that attains the maximum distance to $\mu^\star$, and $R'$ is the distance from $s$ to $O$. Thus we can choose $R = R'+\eta^\star$ to complete the proof.
\end{proof}

Then using Lasserre's hierarchy, for relaxtation order $\kappa$ satisfying $2\kappa \geq \max\{ \deg{g_1},\dots,\deg {g_{l}},2 \}$, we can relax the inner maximization problem
\bea
\label{eq:innermax}
\eta (\mu) = \max_{\theta \in \calS} & \mu\tran Q \mu - 2 \mu\tran Q P\theta + \theta\tran P\tran Q P\theta
\eea
Denote the pseudomoment vector of degree up to $2\kappa$ as $[z]_{2\kappa}$ and shorthand it as $z$, and we denote the moment matrix with respect to $z$ of order $\kappa$ as $M_\kappa(z)$, then the moment relaxation of \eqref{eq:innermax} at order $\kappa$ reads:
\bea \label{eq:innermaxrelaxted}
\eta_\kappa(\mu) = \max_z &  \mu\tran Q \mu - 2 \mu\tran QP L_z(\theta) + \inprod{C}{M_\kappa(z)} \\
\subject & M_\kappa(z) \succeq 0, \quad M_0(z) = 1 \\
& M_{\kappa - \lceil \deg{g_i}/2 \rceil}(g_i z) \succeq 0, \quad i = 1,\dots,l_g \\
& L_z(h_j [\theta]_{2\kappa-\deg{h_j}}) = 0, \quad j=1,\dots,l_h
\eea 
where $L_z(\theta)$ denotes the order-one moment vector in $z$ (\ie the sub-vector of $z$ corresponding to the order-one monomials $\theta$), $ M_{\kappa - \lceil \deg{g_i}/2 \rceil}(g_i z)$ are the localizing matrices corresponding to inequalities $\{ g_i \}_{i=1}^{l}$, $M_0(z) = 1$ enforces the zero-order pseudomoment to be $1$, and the matrix $C$ is 
$$
C = \bmat{ccc}
0 & \zero & \zero \\
\zero & A & \zero \\
\zero & \zero & \zero
\emat
$$
where $A$ satisfies
$$ A_{ij}=\left\{
\begin{aligned}
    & (P\tran QP)_{ij}\quad \text{if}\quad i=j \\
    & (P\tran QP)_{ij}/2\quad \text{else}
\end{aligned}
\right.
$$
Thus, $\inprod{C}{M_\kappa(z)}$ corresponds to the moment relaxation of $ \theta\tran P\tran Q P\theta $.

Denote $\Omega_\kappa$ as the feasible set of the relaxation \eqref{eq:innermaxrelaxted}. Note that $\Omega_\kappa$ does not depend on $\mu$.


Now that we have relaxed the inner ``$\max$'' as a convex problem, we bring back the outer ``$\min$''
\bea \label{eq:minmaxrelaxed}
\eta_\kappa^\star = \min_{\mu\in B_R(O)} \left\{ \eta_\kappa(\mu) = \max_{z \in \Omega_\kappa} \left\{ \mu\tran Q \mu - 2 \mu\tran QP L_z(\theta) + \inprod{C}{M_\kappa(z)} \right\} \right\}
\eea 
from Lasserre's hierarchy, we have $\eta_\kappa (\mu) \geq \eta(\mu)$ for any $\mu$ and any relaxation order $\kappa$, thus we have 
$$
\eta_\kappa^\star = \min_{\mu\in B_R(O)} \eta_\kappa(\mu) \geq \min_{\mu\in B_R(O)} \eta (\mu) = \eta^\star
$$
\ie the optimum of \eqref{eq:minmaxrelaxed} always provides an upper bound for $\eta^\star$.

Notice that now the objective function of the relaxed problem \eqref{eq:minmaxrelaxed} is affine in $z$ and convex in $\mu$. And note the $B_R(O)$ is a compact set. So that we can apply Sion's minimax theorem to switch the order of ``$\min$'' and ``$\max$''.

\begin{theorem}[Sion's minimax theorem~{\cite[Corollary 3.3]{sion1958general}}]
    Let $M,N$ be convex spaces one of which is compact, and $f$ a function on $M\times N$, quasi-concave-convex and upper semicontinuous - lower semicontinuous. Then $\inf_{x\in N}\sup_{y\in M}f(x,y) = \sup_{y\in M}\inf_{x\in N}f(x,y)$.
\end{theorem}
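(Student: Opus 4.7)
The plan is to prove Theorem~\ref{thm:grcc} in two stages: first express the SDP \eqref{eq:grcc-relax} as the result of applying Lasserre's moment hierarchy to the inner maximization of \eqref{eq:gcc} followed by a closed-form outer minimization over $\mu$; then derive (i) and (ii) from standard properties of the moment hierarchy.

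First I would reduce \eqref{eq:gcc} to the equivalent formulation with $\mu$ ranging over a compact ball $B_R(O)$ (this reduction holds because $\calS$ is Archimedean, hence compact, so the Chebyshev center lies in a bounded region; a short argument via the triangle inequality suffices). Next, fix $\mu$ and apply the degree-$\kappa$ moment relaxation to the inner problem $\max_{\theta \in \calS} \{\mu\tran Q\mu - 2\mu\tran Q P\theta + \theta\tran P\tran Q P\theta\}$, obtaining a convex SDP in the pseudomoment vector $z$ with feasible set $\Omega_\kappa$ defined by \eqref{eq:gcc-relax-1}--\eqref{eq:gcc-relax-3}. The relaxed objective reads $\mu\tran Q\mu - 2\mu\tran Q P L_z(\theta) + \langle C, M_\kappa(z)\rangle$, where the constant matrix $C$ is chosen so that $\langle C,M_\kappa(z)\rangle$ equals the moment representation of $\theta\tran P\tran Q P\theta$ (the explicit entries are computed in Appendix~\ref{app:sec:proof-grcc}).

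The central step is to swap ``$\min_\mu\max_z$''. Since the objective is jointly convex in $\mu$ (quadratic with PSD Hessian $Q$) and affine in $z$, and since $B_R(O)$ is compact while $\Omega_\kappa$ is closed convex, Sion's minimax theorem applies and yields equality of the two values. With the order swapped, the inner minimization in $\mu$ admits the closed form $\mu^\star(z) = P L_z(\theta)$ (set the gradient in $\mu$ to zero and use $Q \succ 0$); substituting back cancels one quadratic term and produces the objective $-L_z(\theta)\tran P\tran Q P L_z(\theta) + \langle C, M_\kappa(z)\rangle$ to be maximized over $z \in \Omega_\kappa$, which is exactly $-$\eqref{eq:gcc-relax-cost}. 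This identifies $-\eta_\kappa^\star$ with the optimum of \eqref{eq:grcc-relax} and shows $\mu_{\star,\kappa} = L_{z_{\star,\kappa}}(\theta)$.

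For (i), I would observe that for every $\mu$, the moment relaxation provides an upper bound on the true inner maximum: $\eta_\kappa(\mu) \geq \max_{\xi \in \calS_\xi}\|\mu-\xi\|_Q^2$. Evaluating at $\mu = \mu_{\star,\kappa}$ gives $\eta_\kappa^\star \geq \|\mu_{\star,\kappa}-\xi\|_Q^2$ for every $\xi \in \calS_\xi$, so $\calE_{Q,\kappa}$ encloses $\calS_\xi$. The first inequality in (ii), $\eta_\kappa^\star \geq \eta^\star$, follows from the same relaxation bound applied after taking the outer $\min_\mu$. For the convergence $\eta_\kappa^\star \to \eta^\star$, I would invoke Putinar's Positivstellensatz (valid by the Archimedean hypothesis) to conclude that the inner moment relaxations converge monotonically to the true maximum for each $\mu$; combined with the uniform continuity of $\mu \mapsto \eta(\mu)$ on the compact set $B_R(O)$ and standard pointwise-to-uniform arguments (or a direct comparison via the minimax swap at each $\kappa$), this yields $\eta_\kappa^\star \downarrow \eta^\star$.

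The main obstacle will be the minimax swap: verifying that Sion's hypotheses are satisfied on the unbounded $z$-variable requires either using the boundedness of $\Omega_\kappa$ (true once $\calS$ is Archimedean, since pseudomoments of an Archimedean semialgebraic set are bounded) or arguing that without loss of generality $\mu$ lies in a compact set so that the minimax swap can be localized. A secondary subtlety is the convergence in (ii): pointwise convergence of $\eta_\kappa(\mu) \to \eta(\mu)$ is immediate from Lasserre, but upgrading to convergence of the optimal values $\eta_\kappa^\star \to \eta^\star$ requires a monotonicity/compactness argument, most cleanly done by extracting a subsequential limit of minimizers $\mu_{\star,\kappa}$ in the compact set $B_R(O)$ and passing to the limit in the objective.
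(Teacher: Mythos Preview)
You have misidentified the target statement. The theorem quoted is Sion's minimax theorem, which the paper \emph{cites} from~\cite{sion1958general} and does not attempt to prove; it is invoked as a black-box tool inside the proof of Theorem~\ref{thm:grcc}. There is therefore no ``paper's own proof'' of this statement to compare against, and a proof proposal for it would have to be a proof of Sion's result itself (quasi-concave-convex minimax equality), not an application of it.

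What you actually wrote is a proof sketch for Theorem~\ref{thm:grcc} (the GRCC theorem), in which Sion's theorem appears as one step. On that score your outline tracks the paper's argument closely: restrict $\mu$ to a compact ball, relax the inner maximization by the order-$\kappa$ moment hierarchy, swap $\min_\mu$ and $\max_z$ via Sion, solve the inner $\min_\mu$ in closed form, and then establish enclosure and convergence. The one substantive divergence is in the convergence argument for (ii): you appeal to ``pointwise-to-uniform arguments'' via compactness of $B_R(O)$, whereas the paper invokes the explicit degree bound for Putinar's Positivstellensatz from~\cite{nie2007complexity} to obtain a relaxation order $\kappa$ that works uniformly over all $\mu\in B_R(O)$. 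Your compactness/subsequence idea is plausible but would need care, since monotone pointwise convergence of $\eta_\kappa(\mu)\downarrow\eta(\mu)$ together with continuity of the limit on a compact set does give uniform convergence (Dini), which would suffice; the paper's route via the Nie--Schweighofer bound is more direct.
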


Then we get the equivalent problem:
\bea \label{eq:minmaxrelaxed-switch}
\eta_\kappa^\star = \max_{z \in \Omega_\kappa} \min_{\mu\in B_R(O)} & \mu\tran Q \mu - 2 \mu\tran QP L_z(\theta) + \inprod{C}{M_\kappa(z)}
\eea
Note that by taking the derivative of the objective function with respect to $\mu$, now the inner minimization problem has an explicit solution:
$$
\mu^\star_\kappa = PL_z(\theta)
$$

Next we will show that: we can choose the ${R_1}$ big enough such that for all $\kappa$, $\mu_\kappa^\star \in B_{R_1}(O)$.

\begin{theorem}[Attainability of the minimum]\label{thm:attain_min}
There exists ${R_1}>0$ which doesn't rely on the relaxation order $\kappa$, such that $\mu_\kappa^\star \in B_{R_1}(O)$.

\begin{proof}
    $P$ does not rely on $\kappa$, thus it suffices to show that $L_z(\theta)$ is bounded independent of $\kappa$.

    Due to the assumption of compactness of $\set{S}$, there exists ${R_1}$ such that $\set{S} \subset B_{R_1}(O)$, which is independent from the optimization procedure. Thus, we can add an extra inequality constraint $g_{l+1}(\theta) = -\theta\tran \theta +{R_1}^2 \geq 0$ without affecting the original problem. So without loss of generality, we can assume that we include the constraint $g_{l+1}(\theta) = -\theta\tran \theta +{R_1}^2 \geq 0$ in the original problem.

    Then we examine the constraint of the localizing matrix. The corresponding constraint for the inequality $g_{l+1}(\theta) = -\theta\tran \theta +{R_1}^2 \geq 0$ is $M_{\kappa-1}(g_{l+1}y)\succeq 0$. The principal minors of a positive semidefinite matrix is nonnegative, so the first principal minor of $M_{\kappa-1}(g_{l+1}y)$ is nonnegative, which implies $L_z(g_{l + 1})\geq 0$. And this is equivalent to the constraint that $\inprod{T}{M_\kappa(z)} \leq {R_1}^2$ where
    $$
    T = \bmat{ccc}
    0 & \zero & \zero \\
    \zero & \eye_n & \zero \\
    \zero & \zero & \zero
    \emat
    $$

    Then let's pass this constraint to $L_z(\theta)$ via the moment matrix. Denote the $(n+1)\times (n+1)$ principal submatrix as:
    $$
    \bmat{cc}
    1 & L_z(\theta)\tran \\
    L_z(\theta) & A
    \emat
    $$ where $\trace{A} = \inprod{T}{M_\kappa(z)} \leq {R_1}^2$.

    The moment matrix $M_\kappa(z)\succeq 0$, so that the $(n+1)\times (n+1)$ principal submatrix is also positive semidefinite. By Schur complement, we know that $A - L_z(\theta) L_z(\theta)\tran \succeq 0$. Thus ${R_1}^2\geq \trace{A}\geq \trace{L_z(\theta)L_z(\theta)\tran} = L_z(\theta)\tran L_z(\theta)$. This implies $L_z(\theta)$ lies inside $B_{R_1}(O)$, which is independent of the choice of $\kappa$.
\end{proof}
\end{theorem}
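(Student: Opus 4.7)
The plan is to show that $\mu_\kappa^\star = P L_z(\theta)$ admits a bound independent of the relaxation order $\kappa$, by combining compactness of $\calS$ with the positive semidefiniteness built into the moment relaxation.

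First, since $\calS$ is Archimedean it is compact, so there exists some $r > 0$ with $\calS \subset B_r(O)$. Consequently the polynomial inequality $-\theta\tran \theta + r^2 \geq 0$ is redundant on $\calS$, and I can append it to the description of $\calS$ (as a new $g_{l_g+1}$) without altering the original feasible set. This addition does, however, introduce into the moment relaxation the localizing constraint $M_{\kappa - 1}\bigl((-\theta\tran\theta + r^2)\, z\bigr) \succeq 0$. My next step would be to read off a scalar consequence of that PSD constraint: its $(1,1)$ entry equals the linear functional applied to the polynomial $-\theta\tran\theta + r^2$, so $L_z(-\theta\tran \theta + r^2) \geq 0$, which rearranges to $L_z(\theta\tran \theta) \leq r^2$.

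Then I would propagate this second-moment bound down to the first-moment vector $L_z(\theta)$ using a Schur complement on the original moment matrix $M_\kappa(z)$. The $(n+1)\times(n+1)$ principal submatrix indexed by $\{1, \theta_1,\dots,\theta_n\}$ has the form
$$
\begin{bmatrix} 1 & L_z(\theta)\tran \\ L_z(\theta) & A \end{bmatrix} \succeq 0,
$$
where $A$ is the block of second-order pseudomoments, and its trace is exactly $\trace{A} = L_z(\theta\tran \theta) \leq r^2$. Schur complement then yields $A \succeq L_z(\theta) L_z(\theta)\tran$, so $\norm{L_z(\theta)}^2 = \trace{L_z(\theta) L_z(\theta)\tran} \leq \trace{A} \leq r^2$. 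Because $P$ is a selection matrix its operator norm is at most one, hence $\norm{\mu_\kappa^\star} = \norm{P L_z(\theta)} \leq r$ uniformly in $\kappa$, and taking $R_1 := r$ finishes the argument.

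The main obstacle, to the extent there is one, is index bookkeeping rather than ideas: I must verify that the $(1,1)$ entry of the localizing matrix $M_{\kappa-1}(g_{l_g+1} z)$ truly equals $L_z(g_{l_g+1})$ (rather than some other linear functional), confirm that the relaxation order $\kappa \geq 1$ is large enough for both $M_{\kappa-1}$ and the $(n+1)\times(n+1)$ submatrix of $M_\kappa(z)$ to be well defined, and check that adjoining the redundant ball constraint cannot shrink the feasible set of any level of the relaxation (in fact it can only tighten it, preserving the existing relaxation bound). Once these bookkeeping points are dispatched, the bound follows from a single application of Schur complement, and crucially the constant $r$ depends only on $\calS$, not on $\kappa$.
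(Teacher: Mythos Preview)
Your proposal is correct and follows essentially the same route as the paper: adjoin the redundant ball constraint $-\theta\tran\theta + r^2 \ge 0$, read off $L_z(\theta\tran\theta)\le r^2$ from the $(1,1)$ entry of the corresponding localizing matrix, and then use a Schur complement on the leading $(n+1)\times(n+1)$ block of $M_\kappa(z)$ to conclude $\norm{L_z(\theta)}\le r$. The only (cosmetic) addition in your write-up is the explicit remark that $P$ has operator norm at most one, which the paper leaves implicit.
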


Thus we can know that we can solve the inner minimization exactly. Plugging the $\mu_\kappa^\star$ back into the objective function, we can get another equivalent problem:
\bea \label{eq:minmaxrelaxed-switch-singlelevel}
\eta_\kappa^\star = \max_{z \in \Omega_\kappa} &  - L_z(\theta)\tran P\tran QP L_z(\theta) + \inprod{C}{M_\kappa(z)}
\eea
or equivalently in minimization form 
\bea \label{eq:minmaxrelaxed-switch-singlelevel-min}
- \eta_\kappa^\star = \min_{z \in \Omega_\kappa} & L_z(\theta)\tran P\tran QP L_z(\theta) - \inprod{C}{M_\kappa(z)}
\eea
This is already a convex problem. But it is a convex quadratic SDP problem. By leveraging lifting technique, we can also convert it into a standard form SDP as follows:
\bea \label{eq:minmaxrelaxed-switch-singlelevel-min-SDP}
- \eta_\kappa^\star = \min_{z \in \Omega_\kappa} &  t - \inprod{C}{M_\kappa(z)} \\
\subject &\bmat{cc}
Q^{-1} & PL_z(\theta) \\
L_z(\theta)\tran P\tran & t
\emat \succeq 0
\eea

{\bf Proof of convergence}. Because the basic semialgebraic set $\calS$ is compact, we can always assume that the Assumption \ref{assumption:compact-level-set} holds. Then we will introduce two theorems which are the main ingredients in our proof of the convergence result.

\begin{theorem}[Degree bound of Putinar's Positivstellensatz ~{\cite{nie2007complexity}}] \label{thm: degree bound}
    Let $\set{K}$ be a compact basic semialgebraic set defined by:$\{x:g_i(x)\geq 0 \quad i = 1...m \}$ satisfying the Archimedean property. Assume $\emptyset \neq \set{K} \subset (-1,1)^n$. Then there is some $c>0$ such that for all $f\in \mathbb{R}[x]$ of degree $d$, and positive on $\set{K}$ (i.e. such that $f^*\coloneqq \min{f(x):x\in\set{K}})$), the representation in Putinar's Positivstellensatz \ref{thm:putinar} holds with $$\deg {f_jg_j} \leq cexp((d^2n^d\frac{\|f\|_0}{f^*})^c)$$
    where $\|f\|_0\coloneqq \max_\alpha \frac{f_\alpha}{C_{|\alpha|}^\alpha}$ and $C_{|\alpha|}^\alpha = \frac{|\alpha|!}{\alpha_1!...\alpha_n!}$.
\end{theorem}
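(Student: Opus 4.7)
The plan is to obtain the Putinar degree bound via a two-step reduction: first invoke an effective Schm\"udgen-type representation (where arbitrary products of the $g_i$'s are permitted as multipliers), then convert that certificate into a Putinar representation (where only a single $g_i$ multiplies each SOS multiplier). The hypothesis $\calK \subset (-1,1)^n$ is pivotal, because it furnishes explicit box inequalities $1-x_i^2 \geq 0$ with controlled degree, uniform bounds $|g_i(x)| \leq N_i$ on $\calK$, and a Bernstein-polynomial approximation with explicitly computable error constants.

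\textbf{Step 1: Effective Schm\"udgen representation.} First I would invoke Schweighofer's effective Schm\"udgen Positivstellensatz, applied to the preordering generated by $\{g_i\}_{i=1}^m$ together with the box constraints $\{1-x_j^2\}_{j=1}^n$: for $f\in\poly{x}$ of degree $d$ positive on $\calK$ with $f^{\star}=\min_{x\in\calK}f(x)>0$, there is a decomposition
\[
f \;=\; \sum_{J \subseteq \{1,\dots,m\}} \sigma_J \,\prod_{j \in J} g_j, \qquad \sigma_J \in \sos{x},
\]
with $\deg(\sigma_J \prod_{j\in J} g_j) \leq c_1 \exp\!\bigl((d^{2} n^{d}\,\|f\|_0 / f^{\star})^{c_1}\bigr)$ for some absolute constant $c_1>0$. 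The proof at this stage combines a quantitative \L{}ojasiewicz inequality on $\calK$, whose constants are controlled by the condition $\calK\subset(-1,1)^n$, with a uniform polynomial approximation of $f/f^{\star}$ whose coefficients live naturally inside the preordering.

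\textbf{Step 2: Preordering $\Rightarrow$ quadratic module.} Next I would show that each product term $\sigma_J\prod_{j\in J}g_j$ produced in Step~1 can be pushed into the quadratic module $M(g_1,\dots,g_m)$ at the cost of a controlled degree inflation. The Archimedean hypothesis supplies $N>0$ and a fixed representation $N-\sum_i x_i^2 = s_0 + \sum_{i=1}^m s_i g_i$ with $s_i\in\sos{x}$, which in turn yields $N_i - g_i \in M(g_1,\dots,g_m)$ for the uniform upper bounds $N_i \geq \max_{x\in\calK}g_i(x)$. Using the elementary identity
\[
(\sigma)\,(g_i g_k) \;=\; \tfrac{1}{4}\bigl((\sigma+1)^2 - (\sigma-1)^2\bigr) g_i g_k
\]
together with $g_i g_k = N_k g_i - g_i(N_k - g_k)$ and the membership $N_k-g_k\in M$, I would inductively reduce the length $|J|$ of each product by one, each reduction increasing degrees by at most a bounded polynomial factor in the current degree.

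\textbf{Main obstacle.} The hard part will be bookkeeping the degree blowup through the inductive conversion and verifying that composing it with the doubly-exponential Step~1 bound still produces a bound of the claimed form $c\exp\!\bigl((d^{2}n^{d}\|f\|_0/f^{\star})^{c}\bigr)$. Because the Schm\"udgen bound is already doubly exponential in $d^2 n^d \|f\|_0/f^{\star}$, the polynomial-in-degree overhead from Step~2 can be absorbed into the outer constant $c$; the delicate point is ensuring that the SOS multipliers introduced during the conversion inherit no \emph{new} dependence on $f^{\star}$ beyond the dependence already carried by Step~1. This hinges on fixing the Archimedean representation of $N-\sum_i x_i^2$ once at the outset, independently of $f$, so that its degree contributes only an additive constant (depending on $\calK$ and the $g_i$'s) to the final bound.
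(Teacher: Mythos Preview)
The paper does not prove this statement at all: Theorem~\ref{thm: degree bound} is quoted verbatim from \cite{nie2007complexity} and invoked as a black box in the convergence argument for the GRCC hierarchy. There is therefore no ``paper's own proof'' to compare against.

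That said, your outline is in fact the route taken in the original Nie--Schweighofer paper: one first establishes an effective Schm\"udgen bound (via Schweighofer's earlier work, which itself rests on P\'olya-type approximation on the simplex/cube) and then reduces the preordering representation to a quadratic-module representation with controlled degree growth. So as a sketch of the literature proof your proposal is on the right track.

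Two technical remarks on Step~2. First, the identity you wrote,
\[
\sigma\, g_i g_k \;=\; \tfrac{1}{4}\bigl((\sigma+1)^2-(\sigma-1)^2\bigr) g_i g_k,
\]
is a tautology (both sides equal $\sigma g_i g_k$) and does not by itself move anything into the quadratic module; the actual reduction in Nie--Schweighofer uses identities of the type $2ab = (a+b)^2 - a^2 - b^2$ applied to carefully chosen shifts, together with the Archimedean certificate, to peel off one factor at a time. Second, the Archimedean representation must be fixed at the outset (as you note), but you should also track that the number of product terms in the Schm\"udgen certificate is at most $2^m$, so the inductive reduction is iterated $O(m)$ times per term; the resulting degree overhead is polynomial in the Schm\"udgen degree and hence absorbable into the outer constant $c$, exactly as you anticipated.
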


We have the following theorem to guarantee the convergence of the Lasserre's hierarchy in this problem, which is the extension of the proof in \cite{lasserre2009moments}. Before the proof of our main theorem, we shall first show the following lemma which summarizes the relationship between optimization problems which we will leverage in the proof of the main theorem.
\begin{lemma}[Relationship between optimization problems ~{\cite{lasserre2009moments}}] \label{lemma: equivalent-prob}
    Given a basic semialgebraic set $\set{S}\coloneqq \{x\in\mathbb{R}^n|\ g_i(x) \geq 0\quad i = 1\cdots l_g\}$, and a polynomial $f(x)$, there are four related optimization problems:

    1. Moment problem: 
    \bea\label{prob:moment-prob}
        \eta_{\text{mom}} = \max_{x}& f(x)\\
        \subject & x\in \set{S}
    \eea

    2. Dual problem to the moment problem:
    \bea\label{prob:moment-dual_prob}
        \eta_{\text{pop}} =\min_{\lambda}& \lambda\\
        \subject & \lambda - f(x) \geq 0 \quad \forall x\in \calS
    \eea

    3. Primal relaxation problem

    (with relaxation order $\kappa$ satisfying $2\kappa\geq \max\{\deg {g_i}, \deg f\}$):
    \bea\label{prob:primal-relax-prob}
        \eta_\kappa = \max_{z}& L_z(f)\\
        \subject & z_0 = 1 \\
        & M_\kappa(z)\succeq 0 \\
        & M_{\kappa-\lceil \deg{g_i}/2 \rceil}(g_iz)\succeq 0 
    \eea

    4. Dual relaxation problem:

    (with relaxation order $\kappa$ satisfying $2\kappa\geq \max\{\deg {g_i}, \deg f\}$):
    \bea\label{prob:dual-relax-prob}
    \eta_\kappa^* = \min_{\lambda}& \lambda\\
    \subject & \lambda - f = \sigma_0 + \sum_{j=1}^{l_g} \sigma_jg_j   \\
    & \sigma_j\text{ is sum of squares polynomial} \\
    &  \deg {\sigma_j}\leq 2\kappa - \deg {g_j}, j = 1\cdots l_g 
    \eea
    We can conclude that :
    \bea
    \eta_{\text{mom}} = \eta_{\text{pop}} \leq \eta_\kappa \leq \eta_\kappa^* \quad \forall \kappa \ \text{satisfying} \  2\kappa\geq \max\{\deg {g_i}, \deg f\}
    \eea

\end{lemma}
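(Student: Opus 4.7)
The plan is to prove the chain $\eta_{\text{mom}} = \eta_{\text{pop}} \leq \eta_\kappa \leq \eta_\kappa^*$ one relation at a time, using three standard ingredients: LP duality on a compact set, the Dirac lifting $x \mapsto (x^\alpha)_\alpha$, and SDP weak duality via PSD-pairing. Since each step is a near-immediate consequence of the definitions already set up in Appendix~\ref{app:sec:moment-sos-pop}, no deep machinery beyond what is already cited will be needed.

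For $\eta_{\text{mom}} = \eta_{\text{pop}}$, I would first observe that any $\lambda$ feasible for \eqref{prob:moment-dual_prob} satisfies $\lambda \geq f(x)$ pointwise on $\calS$, so $\lambda \geq \sup_{x \in \calS} f(x) = \eta_{\text{mom}}$, and taking infimum gives $\eta_{\text{pop}} \geq \eta_{\text{mom}}$. Conversely, $\calS$ is compact (Archimedean) and $f$ is continuous, so the maximum is attained and $\lambda = \eta_{\text{mom}}$ is itself feasible for \eqref{prob:moment-dual_prob}, giving $\eta_{\text{pop}} \leq \eta_{\text{mom}}$.

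For $\eta_{\text{mom}} \leq \eta_\kappa$, the plan is to exhibit a Dirac lifting: given $x \in \calS$, define $z_\alpha := x^\alpha$ for $|\alpha| \leq 2\kappa$. Then $z_0 = 1$, $M_\kappa(z) = [x]_\kappa [x]_\kappa\tran \succeq 0$, and each localizing matrix factors as $M_{\kappa - \lceil \deg{g_i}/2 \rceil}(g_i z) = g_i(x) \cdot [x]_{\kappa - \lceil \deg{g_i}/2 \rceil} [x]_{\kappa - \lceil \deg{g_i}/2 \rceil}\tran \succeq 0$ because $g_i(x) \geq 0$. Hence $z$ is feasible for \eqref{prob:primal-relax-prob} with objective $L_z(f) = f(x)$, and taking the supremum over $x \in \calS$ yields the inequality.

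For SDP weak duality $\eta_\kappa \leq \eta_\kappa^*$, I pair any primal-feasible $z$ with any dual-feasible $(\lambda, \{\sigma_j\})$ and apply $L_z$ to the SOS identity $\lambda - f = \sigma_0 + \sum_{j=1}^{l_g} \sigma_j g_j$. The LHS evaluates to $\lambda - L_z(f)$ by $z_0 = 1$ and linearity. Writing each SOS multiplier as $\sigma_j(x) = [x]_{r_j}\tran Q_j [x]_{r_j}$ with $Q_j \succeq 0$ via Proposition~\ref{prop:sos-polynomial}, each term on the RHS is a nonnegative PSD inner product: $L_z(\sigma_0) = \inprod{Q_0}{M_\kappa(z)} \geq 0$ and $L_z(\sigma_j g_j) = \inprod{Q_j}{M_{\kappa - \lceil \deg{g_j}/2 \rceil}(g_j z)} \geq 0$. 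Thus $\lambda \geq L_z(f)$ for every such pair, yielding $\eta_\kappa^* \geq \eta_\kappa$. The only subtlety worth attending to is bookkeeping: the degree condition $2\kappa \geq \max\{\deg{g_i}, \deg{f}\}$ must be invoked to ensure the SOS identity is an equation of polynomials of degree at most $2\kappa$, so that $L_z$ applied to each side is well-defined under pseudomoments of order $\kappa$. No substantive obstacle arises — the lemma is a compact repackaging of standard moment-SOS weak duality.
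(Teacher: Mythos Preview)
Your proof is correct and follows the standard argument. The paper does not actually prove this lemma; it is stated with a citation to \cite{lasserre2009moments} and used as a known ingredient in the convergence proof of Theorem~\ref{thm:grcc}. Your three-step route (trivial LP strong duality for $\eta_{\text{mom}} = \eta_{\text{pop}}$, the Dirac lifting $z_\alpha = x^\alpha$ for $\eta_{\text{mom}} \leq \eta_\kappa$, and PSD-pairing for SDP weak duality $\eta_\kappa \leq \eta_\kappa^*$) is exactly the textbook argument.
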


We prove the Theorem~\ref{thm:grcc} by proving the following equivalent theorem:

\begin{theorem}
    Let $\set{S}$ be a compact basic semialgebraic set $\set{S}\coloneqq \{g_i(x) \geq 0, i = 1\cdots l_g, h_j(x) = 0, j = 1\cdots l_h\}$ which has non-empty interior, such that the Assumption \ref{assumption:compact-level-set} holds. Then the sequence of optimal values of the Lasserre's hierarchy \eqref{eq:minmaxrelaxed-switch-singlelevel-min-SDP} converges to the optimal value of the original problem when the relaxation order $\kappa\to\infty$. Plus, for any relaxation order $\kappa$ satisfying $2\kappa\geq \max\{\deg {g_i}, \deg {h_i}, 2\}$, we have $ \eta_\kappa^\star \geq \eta_{\kappa+1}^\star \geq \eta^\star$
\end{theorem}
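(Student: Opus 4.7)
The plan is to prove both claims by analyzing $\eta_\kappa^\star$ through its pre-swap form $\min_{\mu \in B_R(O)} \eta_\kappa(\mu)$, where $\eta_\kappa(\mu)$ is the order-$\kappa$ Lasserre moment relaxation of the inner polynomial maximization $\eta(\mu) = \max_{\theta \in \calS} \{\mu\tran Q \mu - 2\mu\tran Q P \theta + \theta\tran P\tran Q P \theta\}$ for each fixed $\mu$. The equality of this quantity with the single-level SDP value in \eqref{eq:minmaxrelaxed-switch-singlelevel-min-SDP} was already established via Sion's theorem in the derivation leading up to this theorem, so it is enough to study $\min_{\mu} \eta_\kappa(\mu)$ and let the outer minimum pass through at the end.

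For the monotone chain $\eta_\kappa^\star \geq \eta_{\kappa+1}^\star \geq \eta^\star$, I would first prove $\eta_\kappa(\mu) \geq \eta_{\kappa+1}(\mu) \geq \eta(\mu)$ pointwise in $\mu$ and then take the outer minimum over $B_R(O)$. The right inequality is the standard direction of Lemma~\ref{lemma: equivalent-prob}: any probability measure supported on $\calS$ induces a pseudomoment sequence feasible for \eqref{eq:gcc-relax-1}--\eqref{eq:gcc-relax-3} with the same objective, so the relaxed maximum dominates the true maximum. The left inequality follows by nested feasibility: the order-$\kappa$ moment and localizing matrix constraints are principal submatrices of the order-$(\kappa+1)$ ones, so truncating a feasible order-$(\kappa+1)$ pseudomoment vector to its degree-$2\kappa$ part remains feasible for the smaller relaxation, and the cost \eqref{eq:gcc-relax-cost} is unchanged because it only involves degree-$1$ and degree-$2$ entries of $z$. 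Both inequalities are preserved under $\min_{\mu \in B_R(O)}$.

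For the convergence $\eta_\kappa^\star \to \eta^\star$, I would invoke the classical Lasserre convergence result, which follows from Putinar's Positivstellensatz (Theorem~\ref{thm:putinar}) together with Lemma~\ref{lemma: equivalent-prob}: since Assumption~\ref{assumption:compact-level-set} holds, for each fixed $\mu$ we have $\eta_\kappa(\mu) \searrow \eta(\mu)$ as $\kappa \to \infty$. Let $\mu^\star \in B_R(O)$ be an optimizer of the original min-max problem, whose existence in $B_R(O)$ is guaranteed by Theorem~\ref{thm:equivalence_problem}. Then
\[
\eta^\star \;\leq\; \eta_\kappa^\star \;=\; \min_{\mu \in B_R(O)} \eta_\kappa(\mu) \;\leq\; \eta_\kappa(\mu^\star) \;\longrightarrow\; \eta(\mu^\star) \;=\; \eta^\star,
\]
which sandwiches $\eta_\kappa^\star$ and forces convergence to $\eta^\star$. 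No uniform convergence in $\mu$ is needed because only the single evaluation at $\mu^\star$ enters this sandwich.

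The main obstacle, and the step I would handle most carefully, is the pointwise convergence $\eta_\kappa(\mu^\star) \to \eta(\mu^\star)$. This is exactly where the Archimedean hypothesis is essential: without it, Putinar's Positivstellensatz may fail and $\eta_\kappa(\mu)$ need not decrease to $\eta(\mu)$. I would establish it by passing to the dual SOS relaxation via Lemma~\ref{lemma: equivalent-prob}: for any $\varepsilon > 0$, the polynomial $\eta(\mu^\star) + \varepsilon - f(\theta, \mu^\star)$ is strictly positive on $\calS$, so Putinar's theorem yields a certificate in the quadratic module, and Theorem~\ref{thm: degree bound} bounds its degree by a finite $2\kappa_\varepsilon$. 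For every $\kappa \geq \kappa_\varepsilon$ this certificate is feasible for the dual relaxation \eqref{prob:dual-relax-prob}, giving $\eta_\kappa(\mu^\star) \leq \eta(\mu^\star) + \varepsilon$; combined with $\eta_\kappa(\mu^\star) \geq \eta(\mu^\star)$ and letting $\varepsilon \downarrow 0$ completes the pointwise convergence, and plugging it into the sandwich closes the proof.
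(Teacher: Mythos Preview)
Your proof is correct and in fact takes a cleaner route than the paper's own argument. Both proofs work through the pre-swap form $\min_{\mu \in B_R(O)} \eta_\kappa(\mu)$ and establish the monotone chain $\eta_\kappa^\star \geq \eta_{\kappa+1}^\star \geq \eta^\star$ identically, by pointwise comparison of the moment relaxations. The divergence is in the convergence step. The paper explicitly argues that ``the bound of degree of $\sigma_i g_i$ may not be the same for different $\mu$'' and therefore invokes the quantitative Nie--Schweighofer degree bound (Theorem~\ref{thm: degree bound}) to obtain \emph{uniform} convergence of $\eta_\kappa(\mu)$ to $\eta(\mu)$ over the compact ball of $\mu$'s. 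Your sandwich
\[
\eta^\star \;\leq\; \eta_\kappa^\star \;\leq\; \eta_\kappa(\mu^\star) \;\longrightarrow\; \eta(\mu^\star) \;=\; \eta^\star
\]
shows that this uniformity is unnecessary: pointwise convergence at the single true optimizer $\mu^\star$ (whose existence in $B_R(O)$ you correctly cite from Theorem~\ref{thm:equivalence_problem}) already forces $\eta_\kappa^\star \to \eta^\star$. For that, the qualitative Putinar Positivstellensatz alone suffices; your invocation of Theorem~\ref{thm: degree bound} in the last paragraph is harmless but superfluous at a fixed $\mu^\star$. What the paper's heavier route would buy, if the constants were tracked, is an explicit convergence rate uniform in $\mu$; but since the theorem only claims convergence, your more elementary argument is sufficient and arguably preferable.
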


\begin{proof}
    First we fixed $\mu$ in the outer minimization and only focus on the inner maximization problem. Here we denote $f(\theta) = \Vert \mu - P\theta  \Vert_Q^2$ as the objective function by holding $\mu$ as a constant. From lemma \ref{lemma: equivalent-prob}, we can know that when the relaxation order $\kappa$ satisfies $2\kappa \geq \max\{\deg {g_i}, \deg {h_i}, 2\}$, we have:
    $$
    \eta_{\text{mom}} = \eta_{\text{pop}} \leq \eta_i \leq \eta_i^*
    $$

    Using the definition of the dual problem \eqref{prob:moment-dual_prob}, if we fix an arbitrary $\varepsilon >0$, we can know that there exists a feasible point for problem \eqref{prob:moment-dual_prob} namely $\lambda$ which satisfies:
    $$
    \eta_{\text{mom}} = \eta_{\text{pop}} \leq \lambda \leq \eta_{\text{mom}} + \varepsilon = \eta_{\text{pop}} + \varepsilon
    $$

    Next, consider $\bar{\lambda} = \lambda + \varepsilon$. Thus we can know that $\bar{\lambda} - f \geq \varepsilon > 0 \ \forall x\in \set{S}$, then we can use Putinar's Positivstellensatz \ref{thm:putinar}:
    $$
    \bar{\lambda} - f = \sigma_0 + \sum_{i=1}^{l_g} \sigma_ig_i +\sum_{j = 1}^{l_h} \lambda_jh_j \quad \sigma_j\text{ is sum of squares polynomial}
    $$
    which shows that once $2\kappa \geq \max\{\deg {\sigma_jg_j},\deg {\lambda_jh_j}\}$, we can have
    $$
        \eta_{\text{pop}} = \eta_{\text{mom}}\leq \eta_\kappa\leq \eta_\kappa^* \leq \bar{\lambda} = \lambda + \varepsilon\leq \eta_{\text{mom}} + 2\varepsilon
    $$
    and as $\varepsilon$ is arbitrary, we can let $\varepsilon\to 0$.

    In the following discussion we will only focus on the inequality constraints, for all the equality constraints can be interpreted as two inequality constraints.

    The only barrier we will encounter when we want to prove the minimax version is that, the bound of degree of $\sigma_ig_i$ may not be the same for different $\mu$. So we need a uniform convergence result with respect to $\mu$ (the outer min). Thus we want to apply the degree bound of Putinar's Positivstellensatz. Remind that we have shown the equivalence of problem by restricting the feasible set of $\mu$ to a ball $B_{R_1}(O)$ for all $\kappa$. Also consider the ball of radius $R$ contains $\calS$. Then $\calS$ is also contained in a box $[-R,R]^n$. 

    Consider the polynomial $\tilde{f}(\tilde{\theta}) = \bar{\lambda} - f(\tilde{\theta}\cdot R)$, where $\tilde{\theta}\in \calS/R\subseteq[-1,1]^n$. Suppose $\lambda_{\text{max}}$ is the maximum eigenvalue of $Q$, then we apply Theorem \ref{thm: degree bound}. The parameters are as follows: $d = 2$, $n$ is the dimension of vector $\theta$. Note that: $\|f\|_0 = \max\{2\lambda_{\text{max}}R_1\tilde{\theta}(i),\lambda_{\text{max}}R^2\}$, which can be bounded by a constant independent of $\mu$. Also note that: $f^* = \min\{f(\theta):\theta\in\calS\} > \epsilon$, thus we can choose $\epsilon$ to bound $f^*$. Now we as long as $2\kappa \geq  cexp((d^2n^d\frac{\|f\|_0}{f^*})^c)$, we can guarantee all the $\sigma_ig_i$ will have degree less than $2\kappa$. Thus we can guarantee the inner maximization is exact, and the convergence of the minimax problem follows.
\end{proof}


\section{Proof of Theorem~\ref{thm:minimum-ball-SO3}}
\label{app:sec:proof-ball-so3}

\begin{proof} We first introduce some basics about unit quaternions.

{\bf Unit quaternion}. There are multiple equivalent representations of 3D rotations~\citep{barfoot17book-state}. The most popular choice is a 3D rotation matrix $R \in \SOthree$. Another popular choice is a \emph{unit quaternion}~\citep{yang19iccv-quaternion}. A unit quaternion is a 4-D vector with unit norm, \ie $q \in S^3 := \{ q \in \Real{4} \mid q\tran q = 1 \}$. Given a unit quaternion $q = (q_0,q_1,q_2,q_3)$, its corresponding rotation matrix reads
\bea \label{eq:quaternion-to-rotation}
\calR(q) = \bmat{ccc}
2(q_0^2+q_1^2)-1 & 2(q_1q_2-q_0q_3) & 2(q_1q_3+q_0q_2) \\
2(q_1q_2+q_0q_3) & 2(q_0^2+q_2^2)-1 & 2(q_2q_3-q_0q_1) \\
2(q_1q_3 - q_0q_2) & 2(q_2q_3 + q_0q_1) & 2(q_0^2+q_3^2) - 1
\emat.
\eea
Clearly, $q$ and $-q$ corresponds to the same rotation matrix. The inverse of a rotation matrix $R$ is $R\inv = R\tran$, and the inverse of a unit quaternion $q$ is $q\inv$, which simply flips the sign of $q_1,q_2,q_3$
\bea\label{eq:quaternion-inverse}
q\inv = [q_0,-q_1,-q_2,-q_3]\tran.
\eea
The product of two quaternions $q_c = q_a q_b$ is defined as 
\bea\label{eq:quaternion-product}
q_c = q_a q_b = \begin{bmatrix}
    q_{a,0} & -q_{a,1} & -q_{a,2} & -q_{a,3} \\
    q_{a,1} & q_{a,0} & -q_{a,3} & q_{a,2} \\
    q_{a,2} & q_{a,3} & q_{a,0} & -q_{a,1} \\
    q_{a,3} & -q_{a,2} & q_{a,1} & q_{a,0} 
\end{bmatrix}q_b.
\eea

The next proposition characterizes the equivalence of distance metrics on $\SOthree$ and $S^3$.

\begin{proposition}[Equivalent expression of distance on $\SOthree$ and $S^3$]
    Given two rotations $R,Q$, the geodesic distance is defiend as:
    $$
    \theta_1 = \arccos\left(\frac{\trace{R\tran Q} - 1}{2}\right).
    $$
    Let $q_R$ and $q_Q$ be the unit quaternions corresponding to $R$ and $Q$, the geodesic distance on $S^3$ is defined as 
    $$
    \theta_2 = 2\arccos(| \langle q_R, q_Q\rangle|),
    $$
    where $\langle \cdot,\cdot\rangle $ denotes the inner product viewing $q_R, q_Q$ as vectors in $\mathbb{R}^4$. It holds that $\theta_1 = \theta_2$.
\end{proposition}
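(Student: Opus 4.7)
The plan is to reduce the problem to computing the trace of the composite rotation $R^{\mathsf{T}}Q$ in quaternion form, and then apply a double-angle identity. First I would observe that $R^{\mathsf{T}}Q$ is itself a rotation and corresponds to the quaternion $q := q_R^{-1} q_Q$, using the homomorphism property of the map $\calR$ defined in \eqref{eq:quaternion-to-rotation}. Writing $q = (q_0, q_1, q_2, q_3)$, I would compute the trace of $\calR(q)$ from the diagonal entries in \eqref{eq:quaternion-to-rotation}:
\begin{equation*}
\trace{\calR(q)} = 2(q_0^2+q_1^2) + 2(q_0^2+q_2^2) + 2(q_0^2+q_3^2) - 3 = 6q_0^2 + 2(q_1^2+q_2^2+q_3^2) - 3.
\end{equation*}
Since $q$ is a unit quaternion, $q_1^2+q_2^2+q_3^2 = 1 - q_0^2$, and the trace simplifies to $4q_0^2 - 1$. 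Hence $\theta_1 = \arccos\!\left(\frac{4q_0^2-1-1}{2}\right) = \arccos(2q_0^2 - 1)$.

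Next I would identify $q_0$ with the inner product $\langle q_R, q_Q \rangle$. Using the quaternion inverse formula \eqref{eq:quaternion-inverse} and the product formula \eqref{eq:quaternion-product}, the first component of $q_R^{-1} q_Q$ is
\begin{equation*}
q_0 = q_{R,0}q_{Q,0} + q_{R,1}q_{Q,1} + q_{R,2}q_{Q,2} + q_{R,3}q_{Q,3} = \langle q_R, q_Q \rangle,
\end{equation*}
where the signs work out because the three sign flips in $q_R^{-1}$ cancel the three minus signs in the top row of the product matrix. Plugging back in, $\theta_1 = \arccos(2\langle q_R, q_Q\rangle^2 - 1)$.

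Finally I would invoke the double-angle identity $\cos(2\alpha) = 2\cos^2(\alpha) - 1$ to rewrite $\arccos(2x^2 - 1) = 2\arccos(|x|)$ for $x \in [-1,1]$ (the absolute value is needed because $\arccos$ takes values in $[0,\pi]$, while the identity requires $\cos(\alpha) \geq 0$). This yields $\theta_1 = 2\arccos(|\langle q_R, q_Q\rangle|) = \theta_2$, completing the proof. The only subtle point is the sign convention in the double-angle inversion, which is the reason the absolute value appears in the definition of $\theta_2$ and reflects the fact that $q$ and $-q$ correspond to the same rotation.
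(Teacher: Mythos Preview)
Your proof is correct and follows essentially the same route as the paper: both reduce to the composite rotation $R^{\mathsf{T}}Q$ with quaternion $q = q_R^{-1}q_Q$, identify $q_0 = \langle q_R, q_Q\rangle$ via the product formula, and then link the rotation angle to $q_0$. The only cosmetic difference is that you compute $\trace{\calR(q)} = 4q_0^2 - 1$ explicitly and finish with the double-angle identity, whereas the paper invokes the standard axis-angle relation $\cos(\theta/2) = q_0$ directly; your version is a bit more self-contained but otherwise equivalent.
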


\begin{proof}
    Consider the rotation $A = R\tran Q = R^{-1}Q$.
    The rotation angle of $A$ is 
    $$
    \theta_1 = \arccos\left(\frac{\trace{A} - 1}{2}\right).
    $$
    Let $q_A = q_R^{-1}q_Q$ ($q_R^{-1}$ is the inverse quaternion), by direct computation using \eqref{eq:quaternion-inverse} and \eqref{eq:quaternion-product}, we know 
    $$
    q_{A,0} = \langle q_R, q_Q\rangle.
    $$
    where $\langle \cdot,\cdot\rangle $ denotes the inner product viewing $q_R, q_Q$ as vectors in $\mathbb{R}^4$. For $A$, $\cos{\frac{\theta_2}{2}} = q_{A,0}$. By convention, we want $\theta_2 \in [0,\pi)$, so we need to carefully select the quaternions to guarantee the inner product is positive. For simplicity, we can just take the absolute value for $q_{A,0}$. Thus, the rotation angle of $q_A$ is:
    $$
    \theta_2 = 2\arccos(| \langle q_R, q_Q\rangle|).
    $$
    These two angle refers to the same rotation, thus $\theta_1 = \theta_2$, which completes the proof.
\end{proof}

The fact that for quaternions, $q$ and $-q$ represent the same rotation is the reason why we need to take the absolute value of the inner product. Geometrically, this is saying the same rotation $R$ corresponds to two quaternions on $S^3$ and these two quaternions are actually \emph{antipodal}. When considering the distance of two rotations on $S^3$, we shall always pick the nearest pair of points. 

Now think of the set $\calS_R$, it will get mapped to two antipodal sets on $S^3$. When the set $\calS_R$ is reasonablly small, we can get rid of the absolute value when calculating the minimum enclosing ball.



Precisely speaking, if there exists a rotation $\bar{R}$ such that $d_{\SOthree}(\bar{R},R)\leq \frac{\pi}{2}, \forall R\in\calS_R$, we can know that on $S^3$, there exists a quaternion $\bar{q}$ such that the maximum angle difference from $\bar{q}$ to the set is smaller than $\frac{\pi}{4}$. Thus by triangle inequality, in each branch on the $S^3$, the angle between any two points in the set will be less than $\frac{\pi}{2}$.
Under this assumption, we can completely get rid of the absolute value when we are calculating the distance of any two quaternions.

Thus the problem of finding the minimum enclosing ball of $\calS_R$ on $SO(3)$ can be conveniently translated into finding a minimum enclosing ball for one branch of the counterpart of $\calS_R$ on $S^3$. We claim finding the minimum enclosing ball on $S^3$ is equivalent to finding the minimum enclosing ball in $\mathbb{R}^4$.

\begin{proposition}[Finding miniball on $S^3$ is equivalent to finding miniball in $\mathbb{R}^4$]\label{prop:miniball-rotation}
    Given a set of rotations $\calS_R$ with angle span less than $\frac{\pi}{2}$ defined on $S^3$, the miniball of the set on $S^3$ is just the restriction of the miniball of the set in $\mathbb{R}^4$ onto $S^3$.
\end{proposition}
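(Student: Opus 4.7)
The plan is to exploit the identity $\|p-q\|^2 = 2(1 - \langle p, q\rangle) = 2(1 - \cos d_{S^3}(p,q))$ between chord and arc distance on the unit sphere, together with the elementary fact that a Euclidean ball in $\mathbb{R}^4$ intersected with $S^3$ is a spherical cap, i.e., a geodesic ball on $S^3$ whose center is the projection of the Euclidean center onto $S^3$. The angle-span hypothesis confines everything to the regime where the chord-to-arc correspondence is monotone, letting the two miniball problems be read off from one another.

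Concretely, I would first parametrize the Euclidean center as $\mu = \lambda \hat{\mu}$ with $\hat\mu \in S^3$ and $\lambda \in \mathbb{R}$, and compute, for $q \in S^3$,
$$\|\lambda \hat{\mu} - q\|^2 = \lambda^2 - 2\lambda \langle \hat{\mu}, q\rangle + 1.$$
Setting $c(\hat\mu) := \min_{q \in \calS_R}\langle\hat\mu, q\rangle$ and $\rho_g(\hat\mu) := \max_{q \in \calS_R} d_{S^3}(\hat\mu, q) = \arccos c(\hat\mu)$, where the last identity uses monotonicity of $\arccos$ in the permissible range, the inner maximum over $q \in \calS_R$ becomes $\lambda^2 - 2\lambda c(\hat\mu) + 1$. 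Minimizing this quadratic in $\lambda$ yields $\lambda_\star(\hat\mu) = c(\hat\mu)$ and minimum value $1 - c(\hat\mu)^2 = \sin^2 \rho_g(\hat\mu)$. Substituting back reduces the Euclidean MEB to
$$\min_{\hat\mu \in S^3} \sin^2 \rho_g(\hat\mu).$$
Because the angle-span assumption forces $\rho_g(\hat\mu_\star) < \pi/2$ at the optimum, $\sin^2(\cdot)$ is strictly increasing on the relevant interval $[0,\pi/2)$, so this problem has the same minimizer $\hat\mu_\star$ as the geodesic MEB $\min_{\hat\mu \in S^3}\rho_g(\hat\mu)$. In particular, $\mu_\star = \cos(\rho_g^\star)\,\hat\mu_\star$ so that $\hat\mu_\star = \mu_\star/\|\mu_\star\|$ is precisely the projection asserted in the statement.

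Finally, I would verify that the restriction of the Euclidean miniball $B(\mu_\star, \sin \rho_g^\star)$ to $S^3$ coincides with the geodesic ball $B_g(\hat\mu_\star, \rho_g^\star)$. A short algebraic manipulation using $\mu_\star = \cos(\rho_g^\star)\hat\mu_\star$ and $\sin^2 + \cos^2 = 1$ shows $\|q - \mu_\star\|^2 \leq \sin^2\rho_g^\star$ is equivalent to $\langle \hat\mu_\star, q\rangle \geq \cos \rho_g^\star$, i.e., $d_{S^3}(\hat\mu_\star, q)\leq \rho_g^\star$, which is the claimed geodesic ball. The main obstacle I anticipate is confirming carefully that the angle-span hypothesis (i) ensures the unconstrained minimizer $\lambda_\star(\hat\mu) = c(\hat\mu)$ remains non-negative for every direction $\hat\mu$ visited on the path to the optimum, and (ii) prevents us from falling into the regime $\rho_g \geq \pi/2$ where the monotone correspondence between chord length and arc length breaks down and the antipodal hemisphere would produce spurious enclosing balls; both items follow from the hypothesis but need explicit justification.
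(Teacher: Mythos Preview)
Your argument is correct and in fact cleaner than the paper's, but it follows a genuinely different route. The paper argues by contradiction: letting $B$ be the Euclidean miniball and $C=B\cap S^3$, it supposes $C$ is not the geodesic miniball, takes the true geodesic miniball $C_1$ with Euclidean centroid $a_1$, and builds the Euclidean ball $B_1=B\bigl(a_1,\sqrt{1-\|a_1\|^2}\bigr)$ whose ``equator'' on $S^3$ is exactly $C_1$. The inequality $1-\|a_1\|<\sqrt{1-\|a_1\|^2}$ (for $0<\|a_1\|<1$) shows the spherical cap $C_1$, and hence $\calS_R$, sits inside $B_1$, contradicting minimality of $B$. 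Your approach instead optimizes directly: factoring $\mu=\lambda\hat\mu$, eliminating $\lambda$ in closed form, and reducing the Euclidean problem to $\min_{\hat\mu}\sin^2\rho_g(\hat\mu)$, which shares minimizers with $\min_{\hat\mu}\rho_g(\hat\mu)$ by monotonicity of $\sin$ on $[0,\pi/2)$. This is more explicit and immediately yields the formula $\mu_\star=\cos(\rho_g^\star)\hat\mu_\star$ used in Theorem~\ref{thm:minimum-ball-SO3}, whereas the paper's geometric picture leaves that identity implicit.

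One small tightening: you should restrict to $\lambda\geq 0$ from the start (harmless since $(\lambda,\hat\mu)$ and $(-\lambda,-\hat\mu)$ represent the same $\mu$); without that, the step ``the inner maximum becomes $\lambda^2-2\lambda c(\hat\mu)+1$'' fails for $\lambda<0$. Once $\lambda\geq 0$, your two anticipated obstacles dissolve by comparison with any fixed $\bar q\in\calS_R$: the angle-span hypothesis gives $c(\bar q)>0$, so any $\hat\mu$ with $c(\hat\mu)\leq 0$ has value $\geq 1>1-c(\bar q)^2$ in the Euclidean problem and $\rho_g(\hat\mu)\geq\pi/2>\rho_g(\bar q)$ in the geodesic problem, hence both optima lie in the regime $c(\hat\mu)>0$ where your monotonicity argument applies.
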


An analogy of Proposition~\ref{prop:miniball-rotation} is shown in Fig.~\ref{fig:analogy-miniball-rotation}. The underlying geometric intuition is that: the center of the minimum enclosing ball in $\Real{4}$ must line in the affine space defined by the contact points. And the minimum enclosing ball on $S^3$ is just the equator of the minimum enclosing ball in $\Real{4}$. Thus by projecting the corresponding center of the minimum enclosing ball in $\Real{4}$ onto $S^3$, we can get the center of the minimum enclosing ball on $S^3$.

\begin{figure}
    \centering
    \includegraphics[width=0.5\linewidth]{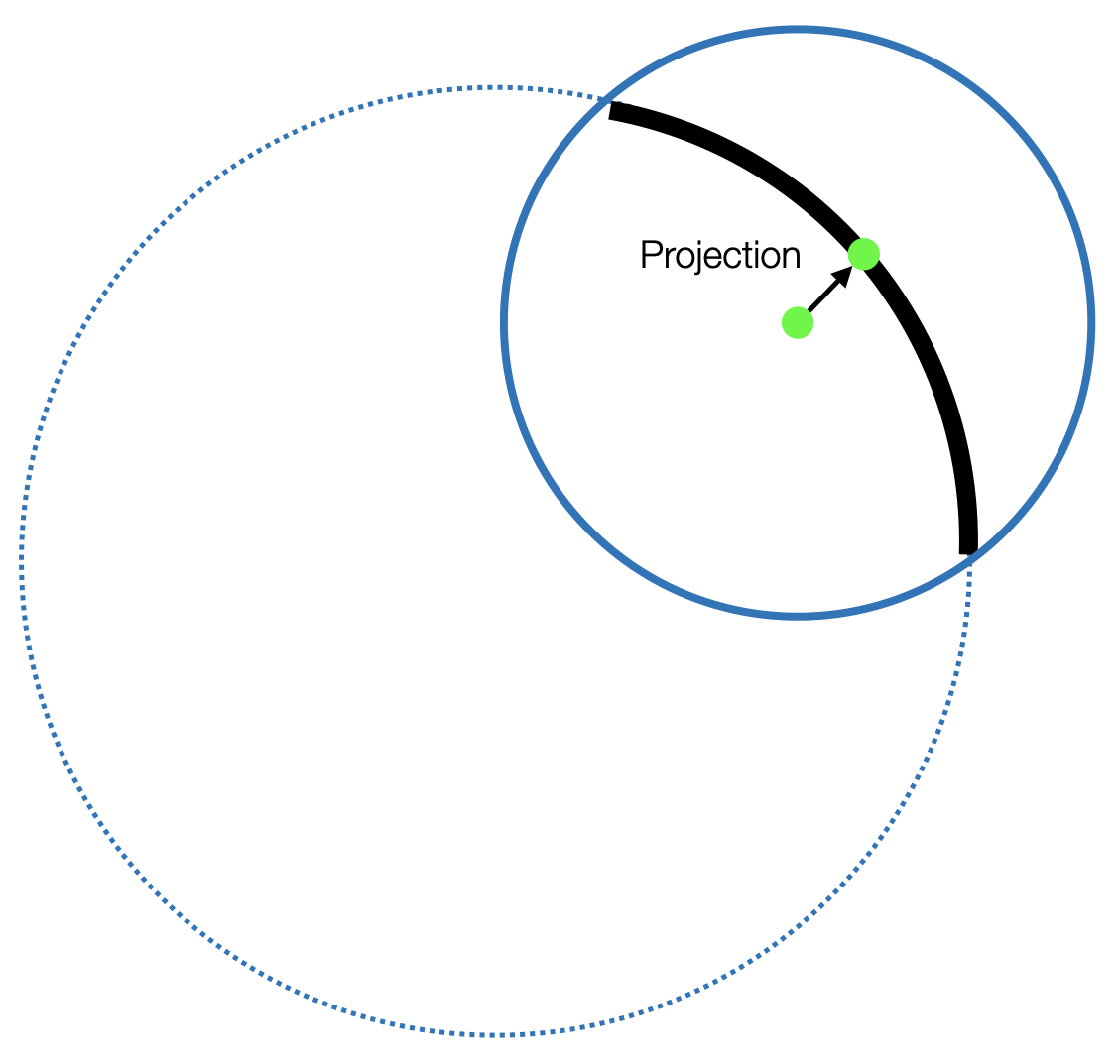}
    \caption{Analogy of Proposition~\ref{prop:miniball-rotation} in $\Real{2}$. The black line segment is a subset of $S^1$ (the dotted circle). The solid blue circle is the minimum enclosing ball of the black line segment in $\Real{2}$. The center of the solid blue circle, projected onto $S^1$, is the center of the minimum enclosing geodesic ball on $S^1$. \label{fig:analogy-miniball-rotation}}
\end{figure}

\begin{proof}
    We'll show that the restriction of the miniball of the set in $\mathbb{R}^4$ is the miniball on $S^3$.
    Suppose $B$ is the miniball of the set in $\mathbb{R}^4$, then denote $C = B\cap S^3$. Clearly, $C$ is a geodesic ball that encloses the set $\calS_R$. We claim that $C$ is also the smallest enclosing geodesic ball.
    
    
    A key observation is that, the center of $B$ must lie in the affine space, else the radius wouldn't be the smallest. The miniball must have contact points, which must lie on the intersection of $S^3$ and $B$, \ie $C$.

    Now Suppose $C$ is not the miniball in $S^3$ and $C_1$ is. Then the radius of $C_1$ on its affine space is smaller than the radius of $C$ on $T$. Then consider the ball $B_1$ in $\mathbb{R}^4$ whose equator is $C_1$. 
    
    We only need to show that $B_1$ contains the set. If we prove this, then $B_1$ a smaller miniball in $\mathbb{R}^4$ than $B$, which causes a contradiction.

    We only need to show that $B_1$ contains the ball crown enclosed by $C_1$ (in $\Real{4}$). Suppose $a_1\in \mathbb{R}^4$ is the center of $C_1$, then the radius of $C_1$ is $r_1 = \sqrt{1 - \|a_1\|^2}$. The farthest distance from the ball crown enclosed by $C_1$ to the $a_1$ is $1 - \|a_1\|$. By simple mathematics, we know that $1 - \|a_1\| < \sqrt{1 - 
    \|a_1\|^2}$ for $0<\|a_1\|<1$, so $B_1$ contains the ball crown, which completes the proof.
\end{proof}

Since the set $\calS_R$ has two branches on $S^3$ that are antipodal to each other, we need to get only one branch and compute its minium enclosing ball.
This is easily done by adding $\bar{q}\tran q\geq 0$ to restrict the quaternions to one branch. This leads to the problem in \eqref{eq:quaternion-minimax} and concludes the proof.
\end{proof}

\section{Extra Experiments}
\label{app:sec:experiments}

We provide extra experiments here to supplement the main experiments in Section~\ref{sec:experiments}.

\subsection{Simple Geometric Sets}
\label{app:sec:exp:toy-examples}
    In this section, we apply the GRCC algorithm~\eqref{eq:grcc-relax} to simple geometric shapes. We choose $Q=I$ so that we are only interested in computing the Chebyshev center.
    \begin{itemize}
        \item Sphere: $\{x\in\mathbb{R}^3| \Vert x \Vert = r\}$. The origin is the Chebyshev center and $r$ is the radius.
        \item Ball: $\{x\in\mathbb{R}^3|\|x\|_2\leq r\}$. The origin is the Chebyshev center and $r$ is the radius.
        \item Cube: $\{x\in\mathbb{R}^3|\|x\|_\infty\leq r\}$. The origin is the Chebyshev center and $\sqrt{3}r$ is the radius.
        \item Boundary of ellipsoid: $\{x\in\mathbb{R}^3|x^T P^{-1} x = 1, P\in \mathbb{S}_{++}\}$. The origin is the Chebyshev center and the root suare of the largest eigenvalue of $P$ is the radius.
        \item TV screen: $\{x\in\mathbb{R}^3|x_1^4+x_2^4+x_3^4\leq 1\}$. The origin is the Chebyshev center and $3^{1/4}$ is the radius.
    \end{itemize}

    It is worth mentioning that, even if the shapes above are simple, the RCC algorithm~\citep{eldar2008minimax} actually can't handle them. For example, some of the sets are not convex (\eg sphere and boundary of ellipsoid) or not defined by quadratic constraints (\eg cube, TV screen).
    
    We choose $r = 3$ for the experiment, and the relaxation order $\kappa=1$ ($\kappa = 2$ for the TV screen problem). And we choose $P$ to be a random positive definite matrix. The results are shown in Table~\ref{tab:toy}. Center difference is the Frobenius norm of the difference between the real Chebyshev center and the result of the SDP relaxation. Radius difference is the difference between the solution of our algorithms and the ground truth radius.

    \begin{table}[htbp] 
        \centering
        \begin{tabular}{ c|c|c|c } 
            \hline
        Example  & Can RCC handle?& Center difference & Radius difference \\ 
        \hline
        Sphere & NO &4.7924e-17 &  6.2866e-10  \\ 
        Ball & YES &9.9467e-18 &  6.4283e-09  \\
        Cube  & YES &1.0080e-17 &  2.6002e-09  \\
        Ellipsoid  & NO & 1.1486e-16 & 9.6512e-10  \\
        TV screen  & NO & 1.8918e-17 & 4.1497e-09  \\
        \hline
        \end{tabular} 
        \caption{Computing the Chebyshev center of simple geometric shapes.}
        \label{tab:toy}
    \end{table}
    We can see that for simple geometrical shapes, the GRCC relaxation is tight even when $\kappa=1$. 

    {\bf Certificate for the finite convergence of the MEE}.
    Here we present the example of TV Screen and leverage the certificate of MEE~\eqref{prop:certificate-mee} proposed in Appendix~\ref{app:sec:proof-certificate-mee} to show the finite convergence. For this symmetric shape, the results of GRCC and MEE coincide.

\begin{figure} 
    \centering
    \includegraphics[width=0.5\linewidth]{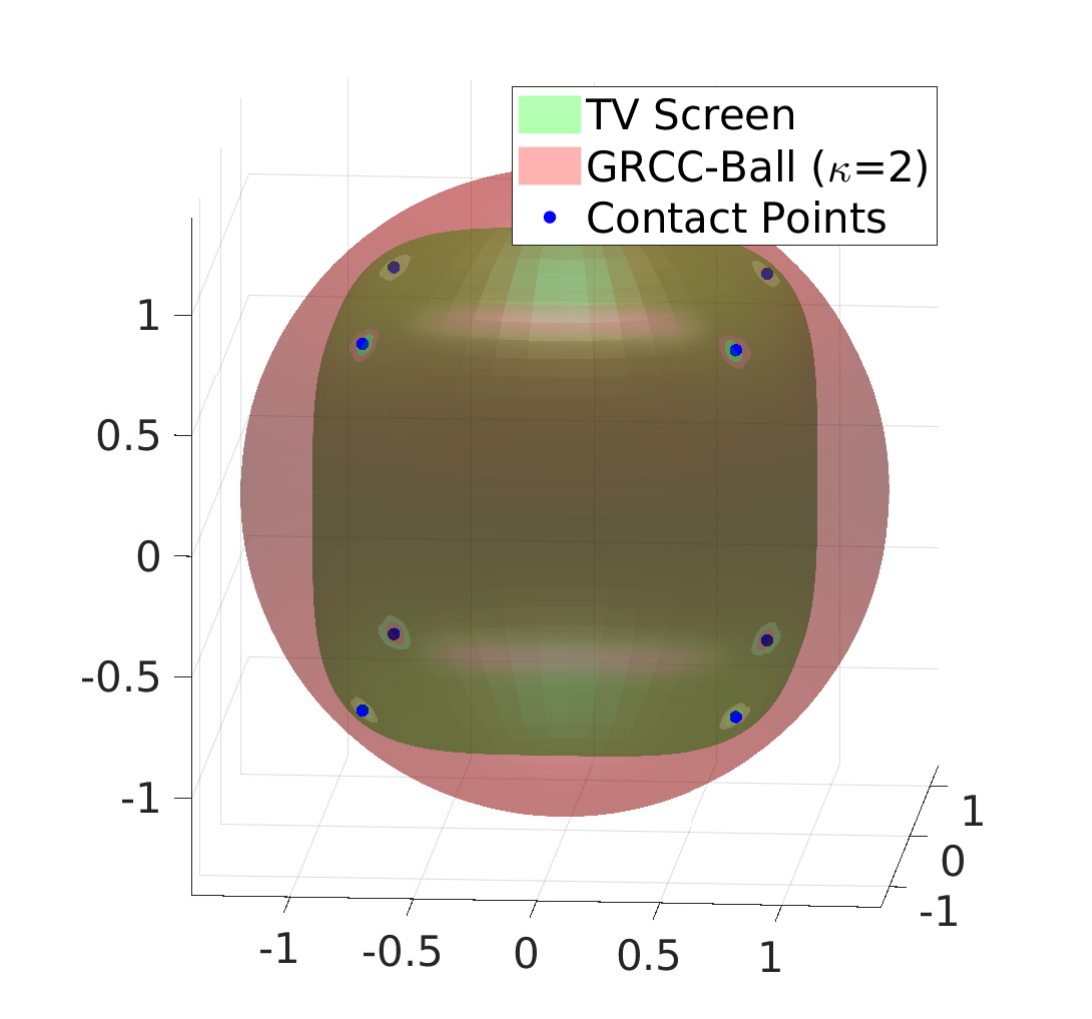}
    \caption{The certificate of finite convergence for TV Screen Example. \label{fig:toy-example-touch}}
\end{figure}

We first get the radius of the ball by letting $\kappa=2$. Then by solving the nonconvex optimization problem~\eqref{eq:pop-certificate}, we can extract eight contact points:

\begin{align*}
    &\bmat{c} 
0.7598\\
0.7598\\
0.7598
\emat, \bmat{c} 
-0.7598\\
0.7598\\
0.7598
\emat, \bmat{c} 
-0.7598\\
-0.7598\\
-0.7598
\emat, \bmat{c} 
0.7598\\
-0.7598\\
-0.7598
\emat, \\
&\bmat{c} 
0.7598\\
0.7598\\
-0.7598
\emat, \bmat{c} 
-0.7598\\
-0.7598\\
0.7598
\emat, \bmat{c} 
0.7598\\
-0.7598\\
0.7598
\emat,\bmat{c} 
-0.7598\\
0.7598\\
-0.7598
\emat
\end{align*}

By solving the feasibility problem~\eqref{eq:contact-points} with a slackness of $10^{-15}$, we can get the following $\alpha$:
$$0,0.75,0.75,0,0.75,0,0.75,0$$
which certifies the finite convergence of the MEE algorithm. The figure is shown in Figure~\ref{fig:toy-example-touch}.

\subsection{Random Linear system}

To supplement Section~\ref{sec:exp:system-id:rand-linear}, here we provide more results on random linear system identification where the matrix $A_\star$ has size $n_x = 3$ and $n_x=4$.

\begin{figure}[h]
	\begin{center}
		\begin{tabular}{ccc}
            \hspace{-10mm}	
            \begin{minipage}{0.34\textwidth}
				\centering
				\includegraphics[width=\textwidth]{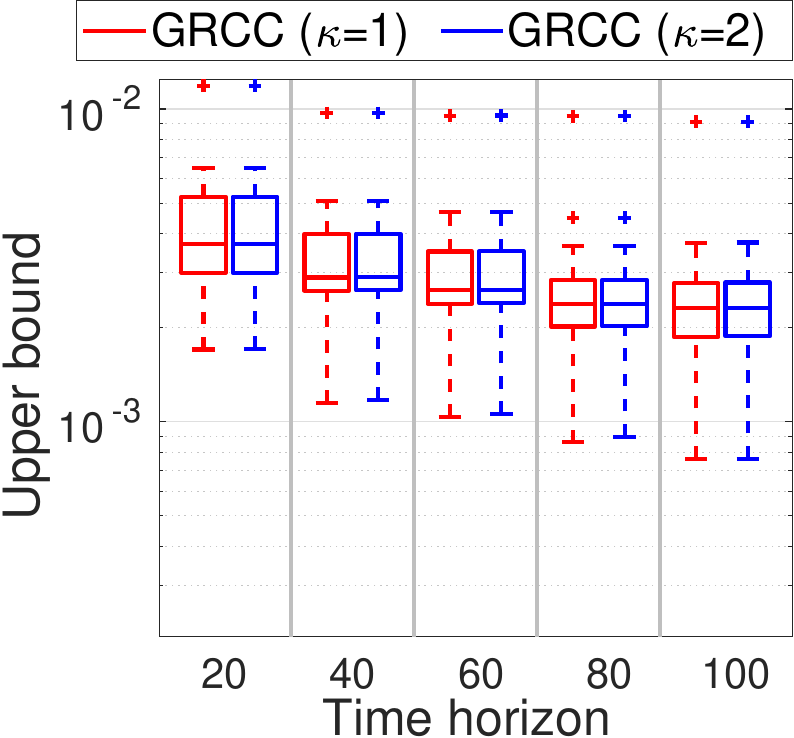}
			\end{minipage}
			&\hspace{-3mm}
			\begin{minipage}{0.34\textwidth}
				\centering
				\includegraphics[width=\textwidth]{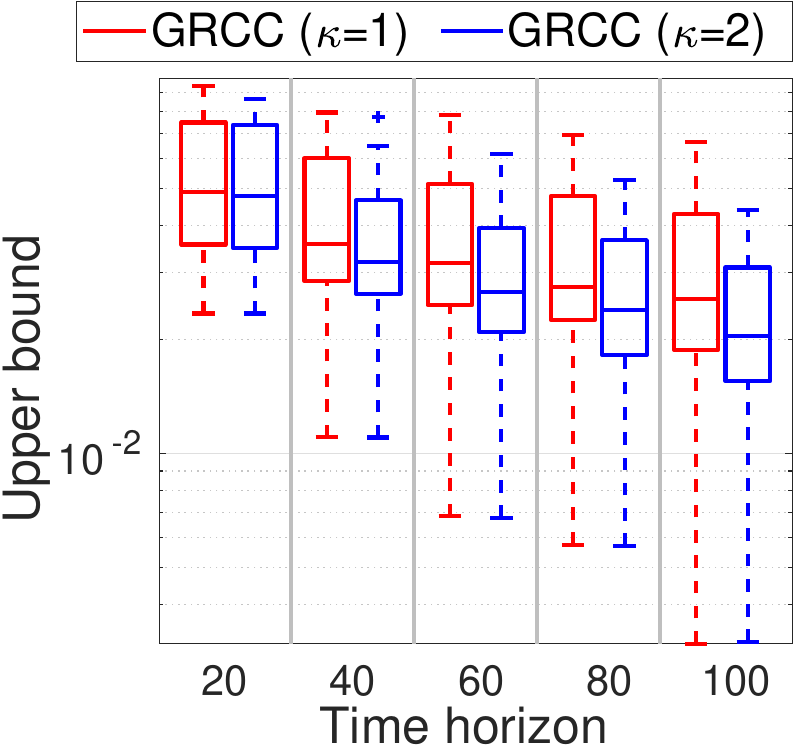}
			\end{minipage}
			&\hspace{-3mm}
			\begin{minipage}{0.34\textwidth}
				\centering
				\includegraphics[width=\textwidth]{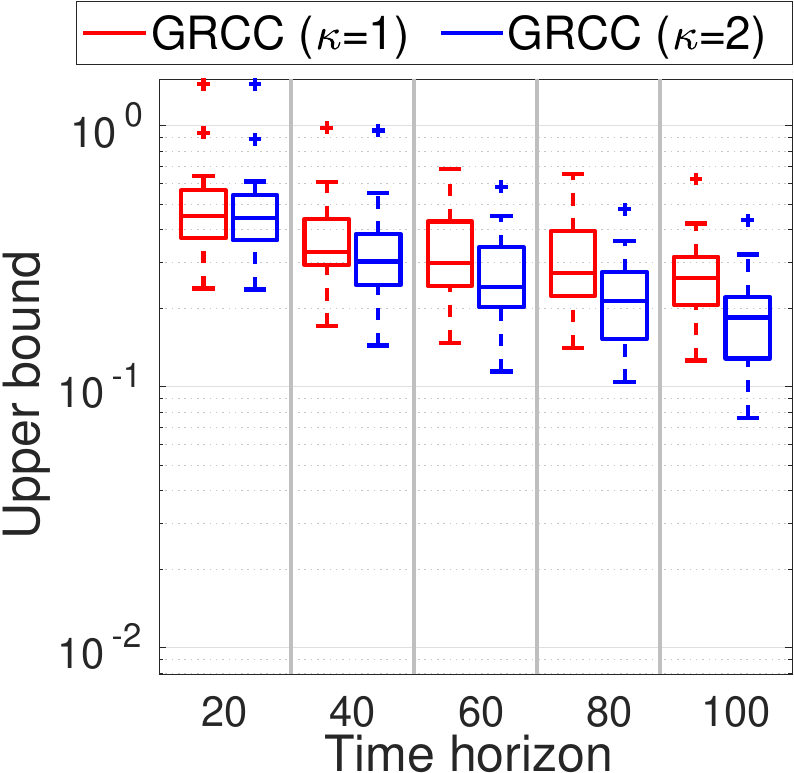}
			\end{minipage}\\
			\multicolumn{3}{c}{(a) $n_x$ = 3}
		\end{tabular}

        \begin{tabular}{ccc}
            \hspace{-10mm}	
            \begin{minipage}{0.34\textwidth}
				\centering
				\includegraphics[width=\textwidth]{figures/linear-n-4-alpha-0.01.pdf}
			\end{minipage}
			&\hspace{-3mm}
			\begin{minipage}{0.34\textwidth}
				\centering
				\includegraphics[width=\textwidth]{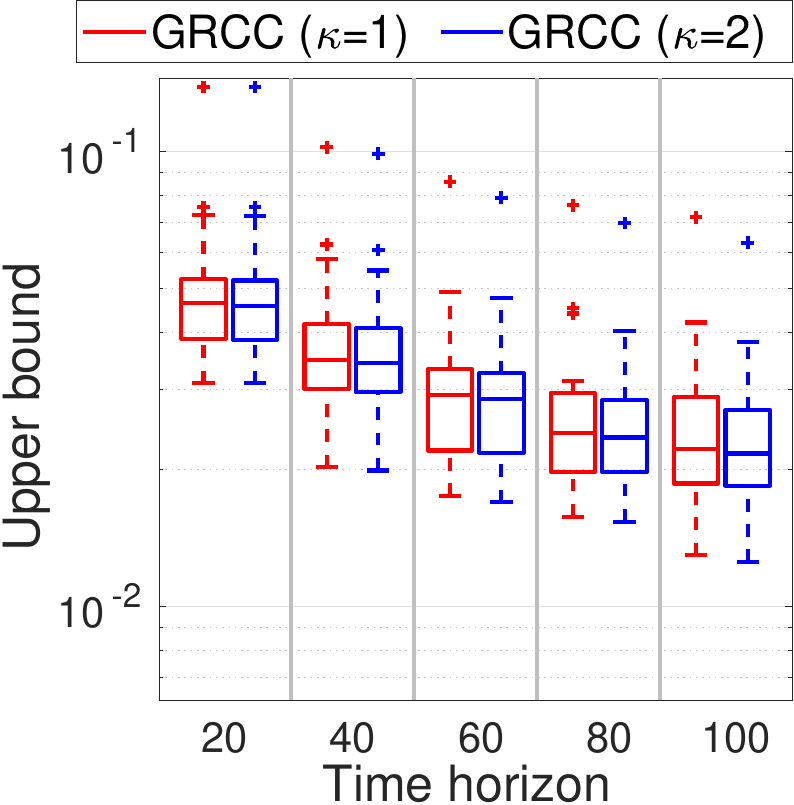}
			\end{minipage}
			&\hspace{-3mm}
			\begin{minipage}{0.34\textwidth}
				\centering
				\includegraphics[width=\textwidth]{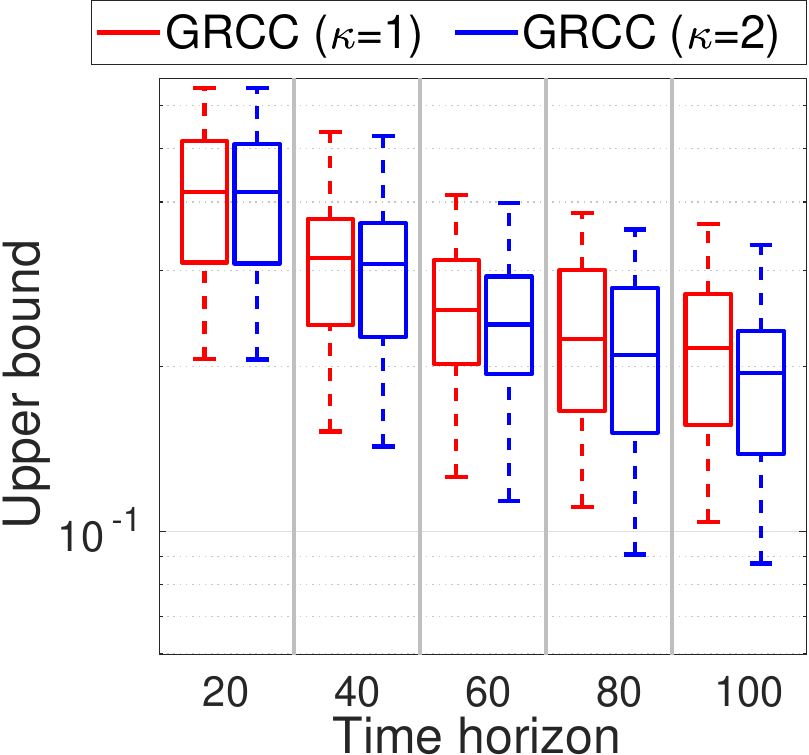}
			\end{minipage}\\
			\multicolumn{3}{c}{(a) $n_x$ = 4}
		\end{tabular}
	\end{center}
	\vspace{-6mm}
	\caption{Identification of random linear systems with $n_x = 3$ and $n_x =4$. From left to right: increasing noise bounds $\beta = 0.01$, $\beta=0.1$, $\beta=1.0$. See Fig.~\ref{fig:exp:system-id}(b) in the main text for results with $n_x=2$. \label{fig:random-linear-3-4}} 
\end{figure}

{
\subsection{The Estimation Quality of the Sampling-based Method}

To supplement the estimated $Q$ matrix in Equation~\ref{eq:estimate-Q}, we present analysis for four different cases.

\begin{itemize}
    \item Rectangle: $\{(x,y)\in\Real{2}\mid -2\leq x\leq 2, -1\leq y \leq 1\}$
    \item Triangle: $\{(x,y)\in\Real{2}\mid x\geq -2,  y \leq 1, y\geq 0.5x\}$
    \item Triangle + Parabola: $\{(x,y)\in\Real{2}\mid y\leq x+1, y\leq -x+1, y\geq x^2 - 1\}$
    \item Random Linear system: The system same as in Section~\ref{sec:exp:system-id:rand-linear}, with $N = 15$.
\end{itemize}

We first use MEE method with $\kappa = 4$ to obtain ground truth $Q_0$ matrix for these four simple examples. Then we do the sampling in the set and estimate the $Q$ matrix by PCA. We normalize $Q_0$ and $Q$ and take their Frobenius distance as the estimation error. We choose the sampling number from $10^2$ to $10^7$ in log scale and perform $20$ times of sampling for each sampling number. The result is shown in Fig~\ref{fig:estimation-Q}.

\begin{figure}[h]
	\begin{center}
		\begin{tabular}{cc}
            \hspace{-10mm}	
            \begin{minipage}{0.5\textwidth}
				\centering
				\includegraphics[width=\textwidth]{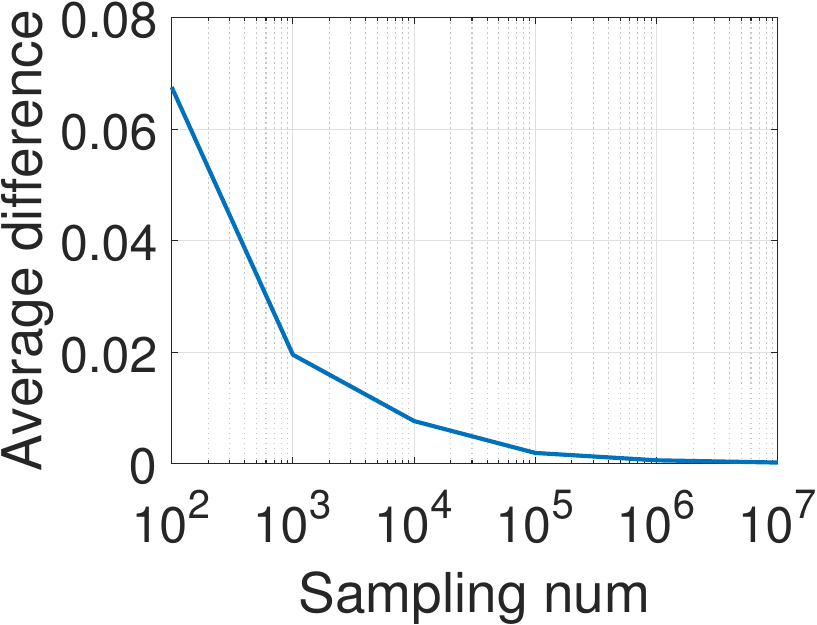}
                \caption*{Example 1}
			\end{minipage}
			&\hspace{-3mm}
			\begin{minipage}{0.5\textwidth}
				\centering
				\includegraphics[width=\textwidth]{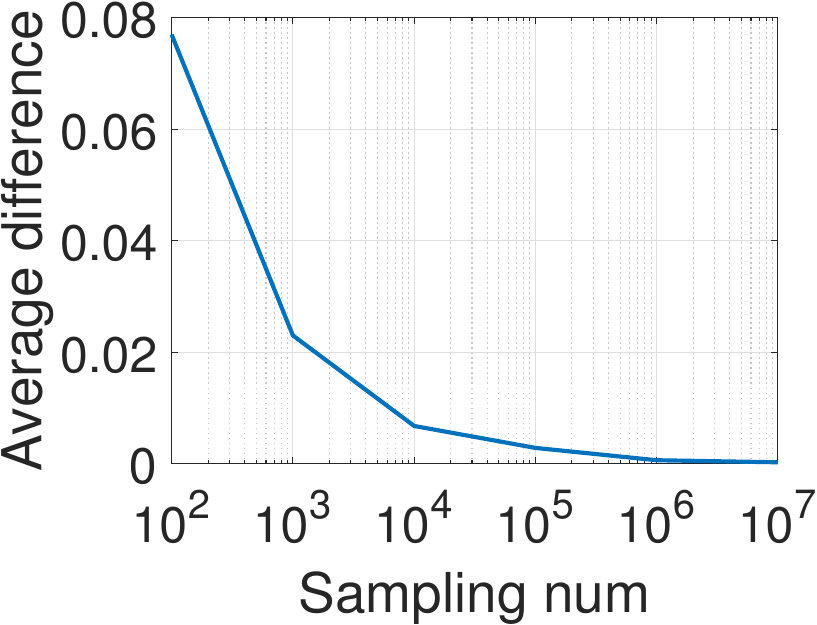}
                \caption*{Example 2}
			\end{minipage}
		\end{tabular}

        \begin{tabular}{cc}
            \hspace{-10mm}	
            \begin{minipage}{0.5\textwidth}
				\centering
				\includegraphics[width=\textwidth]{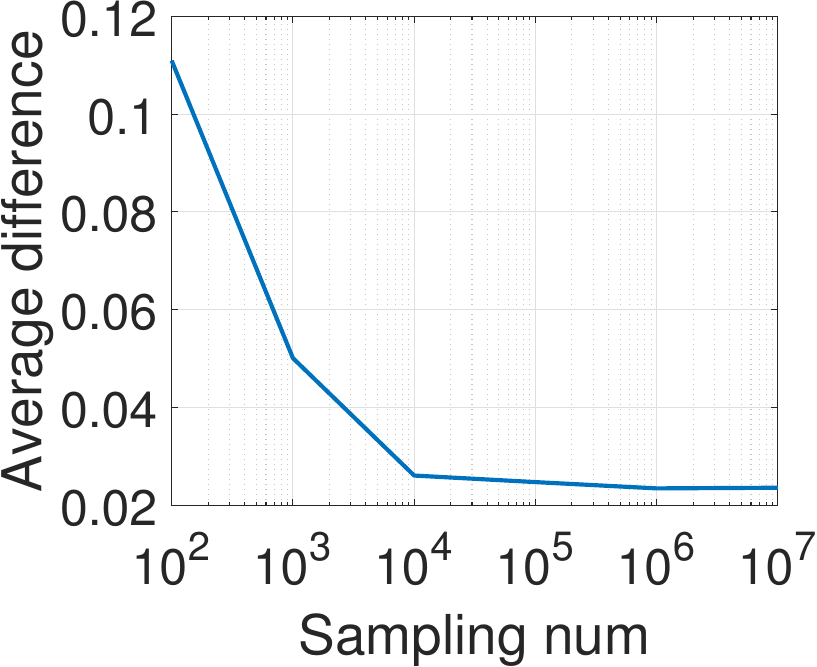}
                \caption*{Example 3}
			\end{minipage}
			&\hspace{-3mm}
			\begin{minipage}{0.5\textwidth}
				\centering
				\includegraphics[width=\textwidth]{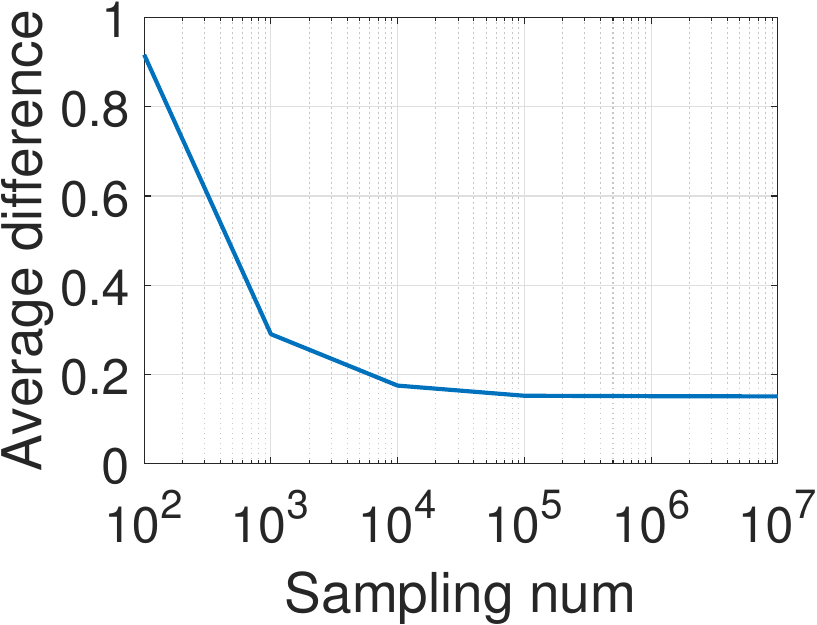}
                \caption*{Example 4}
			\end{minipage}
		\end{tabular}
	\end{center}
    \vspace{-6mm}
	\caption{Average difference between normalized ground truth $Q_0$ matrix and estimated $Q$ \label{fig:estimation-Q}}
\end{figure}

{\bf Interpretation of the result}.

In the first two examples, under sufficiently large sampling number, the difference between the normalized $Q_0$ and $Q$ converges to zero, which means that the shape estimated by the sampling and PCA method do coincide with the ground truth one.

In the third examples, we can see that although $Q$ doesn't converge to $Q_0$ eventually, the difference is small.

For the last example, we can see that there is a gap between $Q$ and $Q_0$. But looking at the data in Table~\ref{tab:add-exp-table}, we can see that the volume difference between MEE and GRCC is not that large. Thus, we can find that $Q$ estimated by sampling is indeed a good estimator of the ground truth shape $Q_0$.

\subsection{Time results for the random linear system experiment in Section~\ref{sec:exp:system-id:rand-linear}}

    The runtime for the $n_x = 2$ experiment is provided as follows in Table~\ref{tab:rand-lin-time-001},~\ref{tab:rand-lin-time-01} and~\ref{tab:rand-lin-time-1}:
    \begin{table}
        \centering
        \begin{tabular}{|c|c|c|c|c|c|}
            \hline 
            & RCC & $\kappa = 1$ & $\kappa = 2$ & $\kappa = 3$ & $\kappa = 4$ \\
            \hline
            $N=20$ &$$0.1493$$ &$$0.0654$$  &$$0.1593$$  &$3.7697$  &$44.7019$  \\
            \hline
            $N=40$ &$0.1185$  &$0.1033$   &$0.865$   &$11.5882$   &$235.4550$   \\
            \hline
            $N=60$ &$0.1327$  &$0.1562$   &$0.6808$   &$17.8974$   &$652.5686$   \\
            \hline
            $N=80$ &$0.1505$  &$0.2146$   &$1.0232$   &$28.3246$   &$1415.6274$   \\
            \hline
            $N=100$ &$0.1733$  &$0.2893$  &$1.4937$  &$43.2002$  &$2561.3750$  \\
            \hline
           \end{tabular}
           \caption{Time result for $\alpha = 0.01$ \label{tab:rand-lin-time-001}}
    \end{table}
    \begin{table}
        \centering
        \begin{tabular}{|c|c|c|c|c|c|}
            \hline 
            & RCC & $\kappa = 1$ & $\kappa = 2$ & $\kappa = 3$ & $\kappa = 4$ \\
            \hline
            $N=20$ &$$0.0964$$ &$$0.0509$$  &$$0.1448$$  &$3.7781$  &$43.3341$  \\
            \hline
            $N=40$ &$0.1123$  &$0.1008$   &$0.3721$   &$10.8165$   &$204.2520$   \\
            \hline
            $N=60$ &$0.1301$  &$0.1554$   &$0.6363$   &$16.5717$   &$630.0205$   \\
            \hline
            $N=80$ &$0.1466$  &$0.2129$   &$1.0177$   &$29.4593$   &$1378.5854$   \\
            \hline
            $N=100$ &$0.1646$  &$0.2913$  &$1.5479$  &$45.0319$  &$2353.4675$  \\
            \hline
           \end{tabular}
           \caption{Time result for $\alpha = 0.1$ \label{tab:rand-lin-time-01}}
    \end{table}
    \begin{table}
        \centering
        \begin{tabular}{|c|c|c|c|c|c|}
            \hline 
            & RCC & $\kappa = 1$ & $\kappa = 2$ & $\kappa = 3$ & $\kappa = 4$ \\
            \hline
            $N=20$ &$$0.1499$$ &$$0.0653$$  &$$0.1288$$  &$2.4671$  &$31.4998$  \\
            \hline
            $N=40$ &$0.1206$  &$0.1048$   &$0.3052$   &$8.9280$   &$181.6477$   \\
            \hline
            $N=60$ &$0.1346$  &$0.1572$   &$0.5957$   &$16.3963$   &$570.1058$   \\
            \hline
            $N=80$ &$0.1508$  &$0.2158$   &$0.9260$   &$28.2859$   &$1239.9052$   \\
            \hline
            $N=100$ &$0.1708$  &$0.2946$  &$1.3095$  &$46.2027$  &$2286.8749$  \\
            \hline
           \end{tabular}
           \caption{Time result for $\alpha = 1$ \label{tab:rand-lin-time-1}}
    \end{table}

}

\subsection{Time results for the constraints pruning experiment in Section~\ref{sec:exp:system-id:pendulum-long-trajectory}}

{The runtime of this experiment is provided as follows: Pruning time for $1000$ constraints is $39.47$s, sampling time is $39.67$s for $100000$ trials. The acceptance rate is $0.27\%$ because the uncertainty set is extremely small. The runtime of GRCC ellipsoid with $\kappa=6$ is $419.94$s, and the GRCC ball with $\kappa=7$ is $38553.65$s.}

\subsection{{Analysis of Three Computational Improvements}}

{
    Consider a random linear system same as the one in Section~\ref{sec:exp:system-id:rand-linear}. We choose $n_x = 2, N = 50, 1000, \beta = 0.5$. We perform experiments combining three components: (i) MEE in Theorem~\ref{thm:mee-sos}, (ii) Constraints pruning in Section~\ref{sec:prune-redundant} and (iii) GRCC algorithm in Theorem~\ref{thm:grcc}. 
    
    More specifically, we perform experiments under following settings: (i) MEE (with $\kappa = 2,4$), (ii) Constraints Pruning + MEE (with $\kappa = 2,4$), (iii) GRCC (with $\kappa = 2,4$) and (iv) Constraints Pruning + GRCC (with $\kappa = 2,4$). For the GRCC algorithm, we first solve the enclosing ball with $\kappa = 2$ and then do the rejection sampling in the enclosing ball to estimate the shape $Q$. We didn't perform the GRCC algorithm with $\kappa = 4$ when $N = 1000$ because the problem can't be handled by MOSEK. The results are summarized in Table~\ref{tab:add-exp-table}.







    {\bf Interpretation of the results}.

    \begin{enumerate}
        \item Comparing (i) and (ii), we observe that the pruning algorithm can significantly reduce the runtime for MEE algorithm. It can also increase the numerical precision. Also, when the trajectory is short, the MEE can get tighter bound comparing to the GRCC algorithm.
        \item Comparing (i) and (iii), we can see that the original GRCC algorithm is slightly more conservative when $N=50$ and more time-consuming when the trajectory is long. However, we need to note that the MEE algorithm is not numerically stable for we get UNKNOWN for (i) when $N=1000$. 
        \item Comparing (iii) and (iv), we can see that the pruning algorithm can significantly reduce the runtime for GRCC algorithm. Also, from the results when $N=1000$, we can see that the GRCC algorithm get tighter bound by the pruning algorithm due to better numerical performance.
        \item Comparing (i), (ii) and (iv), we can see that pruned GRCC is more conservative than MEE algorithm. However, it is more numerically stable and it is close to the MEE results, which shows that the estimated $Q$ is a relatively good estimation.
        \item Comparing the results of $N = 50$ and $N=1000$, we can see that the MEE algorithm is not numerically stable enough when the trajectory becomes longer, while the GRCC algorithm is more stable with respect to the trajectory length.
    \end{enumerate}


\begin{table}
    \centering
    \begin{tabular}{|c|c|c||c|c|}
        \hline 
                & \multicolumn{2}{c||}{$N=50$} & \multicolumn{2}{c|}{$N=1000$}\\
        Methods & Runtime & Volume & Runtime & Volume \\
        \hline 
        Pruning &0.9450 &$\backslash$  &42.3442  &$\backslash$    \\
        Sampling w/o Pruning &0.5343  &$\backslash$  &90.4302 &$\backslash$   \\
        Sampling w/ Pruning &0.1398  &$\backslash$  &0.6129  &$\backslash$   \\
        \hline
        MEE ($\kappa=2$) &2.1273 &1.3983e-04  &58.1813  &UNKNOWN   \\
        MEE ($\kappa=4$) &22.6902  &1.2382e-04  &7963.7482  &UNKNOWN   \\
        \hline 
        MEE w/ Pruning ($\kappa=2$) &0.4851  &1.3973e-04  &0.5927  &UNKNOWN    \\
        MEE w/ Pruning ($\kappa=4$) &5.9770  &1.2301e-04  &6.2782  &UNKNOWN   \\ 
        \hline 
        GRCC ($\kappa=2$) &0.4745  &1.4975e-04  &99.0342  &1.1996e-07    \\
        GRCC ($\kappa=4$) &477.0226  &1.4065e-04  &N/A  &N/A   \\ 
        \hline 
        GRCC w/ Pruning ($\kappa=2$) &0.0903  &1.4975e-04  &0.0983  &9.3510e-09    \\
        GRCC w/ Pruning ($\kappa=4$) &28.2055  &1.4035e-04  &19.2076  &8.7840e-10  \\
        \hline  
       \end{tabular}
       \caption{Summary table. \label{tab:add-exp-table}}
\end{table}

}

{
    \section{Summary}\label{app:sec:summary}
    We hereby summarize the three computational enhancements and give an overview of the complete workflow for computing the MEE of the SME in Example~\ref{ex:sme-system-id} and Example~\ref{ex:sme-object-pose}.


    \begin{enumerate}[leftmargin=18mm, label= Step \arabic*:]
        \item Constraints Pruning. If the number of constraints in the original problem is too large, we can prune redundant constraints by solving cheap moment-SOS relaxations using Algorithm~\ref{alg:prune-redundant-constraints}. This can not only reduce the runtime significantly, it can also benefit the numerical performance of the optimization algorithm.
        \item Sample Points and Estimate $Q$. In this step, if a sampling algorithm is already available, we can directly use the sampling algorithm. If not, we first solve the GRCC problem with $Q=I$ to obtain a bounding ball of the set, then we do rejection sampling in the ball. At last, we leverage Equation~\eqref{eq:estimate-Q} (\ie PCA) to estimate the $Q$ matrix using the sampled points.
        \item Solve the GRCC problem using estimated $Q$. For normal problems in Euclidean space, we choose an relaxation order and solve GRCC. For problems in the \SOthree space, we leverage Theorem~\ref{thm:minimum-ball-SO3} to solve the Chebyshev.
    \end{enumerate}
    \vspace{-5mm}
}

\section{Related Work}
\label{sec:related-work}

We provide a brief review of set-membership estimation (SME) in control and perception (mainly related to system identification and object pose estimation), as well as existing algorithms that seek simpler enclosing sets of SME. For related work about system identification, we refer to \cite{oymak2019non,bakshi23arxiv-new}, for related work about object pose estimation, we refer to \cite{yang2023object} and references therein.

As mentioned in Section~\ref{sec:intro}, set-membership estimation (SME) is a framework for model estimation which doesn't place any assumptions on the statistical distribution of the noise. It only assumes the noises are unknown but bounded. SME dates back to at least~\cite{milanese91automatica-optimal} and~\cite{kosut92tac-set} in control theory where the main interest was to estimate unknown parameters of dynamical systems. \cite{li23arxiv-learning} proves that the SME converges to a single point estimate (\ie the groundtruth) as the system trajectory becomes long enough when the noise satisifies bounded Gaussian distributions. Recently, \cite{yang2023object} applied SME to a computer vision example known as object pose estimation. 

Because an SME is defined by complex constraints and the number of constraints increases as the horizon of the system gets longer, an important task in set-membership estimation is to find simpler and more intuitive sets that enclose the original sets~\citep{walter1994characterizing}. The main idea of applying SME framework to system identification is to maintain an easily-computed outer approximation of the uncertainty set. There are two main ways to get the outer approximation, namely \emph{batch computation} or \emph{recursive computation}.

For batch calculations, we observe a sequence of inputs and outputs of the system, and the uncertainty set is all the possible parameters that can generate the sequence under the assumption of the bounded noise. \cite{cerone2011set} propose an algorithm leveraging Lasserre's hierarchy to calculate the bounding interval on every dimension of the parameters. In robotics, similar bounding intervals have been proposed in the context of simultaneous localization and mapping~\citep{ehambram22icra-interval,mustafa18ras-guaranteed}. \cite{casini2014feasible} propose relaxation algorithms to approximate minimum volume enclosing orthotope. An exstention to bounding polytope is also shown by calculating the interval bounds on different directions. \cite{eldar2008minimax} propose a relaxation algorithm based on Shor's semidefinite relaxation to calculate the enclosing ball of the sets defined by quadratic constraints. \cite{durieu2001multi} consider the ellipsoidal approximation of the intersection of $K$ ellipsoids, and solved the minimum enclosing ellipsoid in a certain parametric class. \cite{dabbene13ecc-set} proposed an asymptotically convergent algorithm using sublevel sets of polynomials to enclose the uncertainty set.

For recursive calculations, we sequencially observe new observations and update the approximation according to new observations. \cite{fogel1982value} propose a recursive formula on how to update the bounding ellipsoid minimizing the volume and trace after each observation. \cite{walter1989exact} propose the algorithm to update the polyhedral description of the feasible parameter set. \cite{valero2019ifac-recurparall} propose how to update the parallelotope by intersecting the current one with the observation strips, and then find the minimum one.
In favor of computational efficiency, recursive methods find an outer approximation of the feasible set at every time step, thus are always not as exact as the batch methods. 

{\bf Position of our algorithms}. Our SOS-MEE algorithm (Theorem~\ref{thm:mee-sos}) and GRCC algorithm (Theorem~\ref{thm:grcc}) both belong to batch computation algorithms (although it is easy to see that our algorithms can also be used in a recursive fashion). Our GRCC formulation is more general and contains the minimum enclosing interval studied in \cite{cerone2011set} by choosing $\xi$ as just one dimension of $\theta$. \cite{durieu2001multi} calculates the minimum enclosing ellipsoids only in a certain parametric set, so it could be sub-optimal, while our SOS-MEE algorithm has asymptotic convergence to the true MEE and the convergence can be certified. Moreover it can only handle the case of intersections of ellipsoids while we can handle arbitrary basic semialgebraic sets. Our GRCC algorithm strictly generalizes \citep{eldar2008minimax}, as it's just the simplest case ($\kappa=1$) of our algorithm. Lastly, although computing the enclosing ellipsoids is not as as general as computing polynomial level sets as in~\cite{dabbene13ecc-set}, our algorithms are more computational efficient and implementable, as demonstrated by numerial experiments in Section~\ref{sec:experiments}. Our contributions lie in the three computational enhancements (Section~\ref{sec:improvement}) that make the SOS-based framework tractable and applicable to modern system identification and object pose estimation problems.

\bibliography{refs.bib}

\begin{thebibliography}{59}
\providecommand{\natexlab}[1]{#1}
\providecommand{\url}[1]{\texttt{#1}}
\expandafter\ifx\csname urlstyle\endcsname\relax
  \providecommand{\doi}[1]{doi: #1}\else
  \providecommand{\doi}{doi: \begingroup \urlstyle{rm}\Url}\fi

\bibitem[Abbasi-Yadkori and Szepesv{\'a}ri(2011)]{abbasi11colt-regret}
Yasin Abbasi-Yadkori and Csaba Szepesv{\'a}ri.
\newblock Regret bounds for the adaptive control of linear quadratic systems.
\newblock In \emph{Proceedings of the 24th Annual Conference on Learning
  Theory}, pages 1--26. JMLR Workshop and Conference Proceedings, 2011.

\bibitem[Angelopoulos and Bates(2021)]{angelopoulos21arxiv-conformal}
Anastasios~N Angelopoulos and Stephen Bates.
\newblock A gentle introduction to conformal prediction and distribution-free
  uncertainty quantification.
\newblock \emph{arXiv preprint arXiv:2107.07511}, 2021.

\bibitem[Antonante et~al.(2021)Antonante, Tzoumas, Yang, and
  Carlone]{antonante21tro-outlier}
Pasquale Antonante, Vasileios Tzoumas, Heng Yang, and Luca Carlone.
\newblock Outlier-robust estimation: Hardness, minimally tuned algorithms, and
  applications.
\newblock \emph{IEEE Transactions on Robotics}, 38\penalty0 (1):\penalty0
  281--301, 2021.

\bibitem[ApS(2019)]{aps19mosek}
Mosek ApS.
\newblock Mosek optimization toolbox for matlab.
\newblock \emph{User's Guide and Reference Manual, Version}, 4:\penalty0 1,
  2019.

\bibitem[Arnaudon and Nielsen(2013)]{arnaudon13cg-approximating}
Marc Arnaudon and Frank Nielsen.
\newblock On approximating the riemannian 1-center.
\newblock \emph{Computational Geometry}, 46\penalty0 (1):\penalty0 93--104,
  2013.

\bibitem[Bakshi et~al.(2023)Bakshi, Liu, Moitra, and Yau]{bakshi23arxiv-new}
Ainesh Bakshi, Allen Liu, Ankur Moitra, and Morris Yau.
\newblock A new approach to learning linear dynamical systems.
\newblock \emph{arXiv preprint arXiv:2301.09519}, 2023.

\bibitem[Barfoot(2017)]{barfoot17book-state}
Timothy~D Barfoot.
\newblock \emph{State estimation for robotics}.
\newblock Cambridge University Press, 2017.

\bibitem[B{\'e}lisle et~al.(1993)B{\'e}lisle, Romeijn, and
  Smith]{belisle93mor-hit}
Claude~JP B{\'e}lisle, H~Edwin Romeijn, and Robert~L Smith.
\newblock Hit-and-run algorithms for generating multivariate distributions.
\newblock \emph{Mathematics of Operations Research}, 18\penalty0 (2):\penalty0
  255--266, 1993.

\bibitem[Blekherman et~al.(2012)Blekherman, Parrilo, and
  Thomas]{blekherman12siam-semidefinite}
Grigoriy Blekherman, Pablo~A Parrilo, and Rekha~R Thomas.
\newblock \emph{Semidefinite optimization and convex algebraic geometry}.
\newblock SIAM, 2012.

\bibitem[Brachmann et~al.(2014)Brachmann, Krull, Michel, Gumhold, Shotton, and
  Rother]{brachmann14eccv-lmo}
Eric Brachmann, Alexander Krull, Frank Michel, Stefan Gumhold, Jamie Shotton,
  and Carsten Rother.
\newblock Learning 6d object pose estimation using 3d object coordinates.
\newblock In \emph{European Conf. on Computer Vision (ECCV)}, pages 536--551.
  Springer, 2014.

\bibitem[Caron et~al.(1989)Caron, McDonald, and Ponic]{caron89jota-degenerate}
RJ~Caron, JF~McDonald, and CM~Ponic.
\newblock A degenerate extreme point strategy for the classification of linear
  constraints as redundant or necessary.
\newblock \emph{Journal of Optimization Theory and Applications}, 62\penalty0
  (2):\penalty0 225--237, 1989.

\bibitem[Casini et~al.(2014)Casini, Garulli, and Vicino]{casini2014feasible}
Marco Casini, Andrea Garulli, and Antonio Vicino.
\newblock Feasible parameter set approximation for linear models with bounded
  uncertain regressors.
\newblock \emph{IEEE Transactions on Automatic Control}, 59\penalty0
  (11):\penalty0 2910--2920, 2014.

\bibitem[Cerone et~al.(2011)Cerone, Piga, and Regruto]{cerone2011set}
Vito Cerone, Dario Piga, and Diego Regruto.
\newblock Set-membership error-in-variables identification through convex
  relaxation techniques.
\newblock \emph{IEEE Transactions on Automatic Control}, 57\penalty0
  (2):\penalty0 517--522, 2011.

\bibitem[Cotorruelo et~al.(2020)Cotorruelo, Kolmanovsky, Ram{\'\i}rez, Limon,
  and Garone]{cotorruelo20arxiv-elimination}
Andres Cotorruelo, Ilya Kolmanovsky, Daniel~R Ram{\'\i}rez, Daniel Limon, and
  Emanuele Garone.
\newblock Elimination of redundant polynomial constraints and its use in
  constrained control.
\newblock \emph{arXiv preprint arXiv:2006.14957}, 2020.

\bibitem[Dabbene and Henrion(2013)]{dabbene13ecc-set}
Fabrizio Dabbene and Didier Henrion.
\newblock Set approximation via minimum-volume polynomial sublevel sets.
\newblock In \emph{European Control Conference}, pages 1114--1119. IEEE, 2013.

\bibitem[Durieu et~al.(2001)Durieu, Walter, and Polyak]{durieu2001multi}
C{\'e}cile Durieu, E~Walter, and Boris Polyak.
\newblock Multi-input multi-output ellipsoidal state bounding.
\newblock \emph{Journal of optimization theory and applications}, 111:\penalty0
  273--303, 2001.

\bibitem[Ehambram et~al.(2022)Ehambram, Voges, Brenner, and
  Wagner]{ehambram22icra-interval}
Aaronkumar Ehambram, Raphael Voges, Claus Brenner, and Bernardo Wagner.
\newblock Interval-based visual-inertial lidar slam with anchoring poses.
\newblock In \emph{IEEE Intl. Conf. on Robotics and Automation (ICRA)}, pages
  7589--7596. IEEE, 2022.

\bibitem[Eldar et~al.(2008)Eldar, Beck, and Teboulle]{eldar2008minimax}
Yonina~C Eldar, Amir Beck, and Marc Teboulle.
\newblock A minimax chebyshev estimator for bounded error estimation.
\newblock \emph{IEEE transactions on signal processing}, 56\penalty0
  (4):\penalty0 1388--1397, 2008.

\bibitem[Fogel and Huang(1982)]{fogel1982value}
Eli Fogel and Yih-Fang Huang.
\newblock On the value of information in system identification—bounded noise
  case.
\newblock \emph{Automatica}, 18\penalty0 (2):\penalty0 229--238, 1982.

\bibitem[G{\"a}rtner(1999)]{gartner99esa-fast}
Bernd G{\"a}rtner.
\newblock Fast and robust smallest enclosing balls.
\newblock In \emph{European symposium on algorithms}, pages 325--338. Springer,
  1999.

\bibitem[Hartley and Zisserman(2003)]{hartley03book-multiple}
Richard Hartley and Andrew Zisserman.
\newblock \emph{Multiple view geometry in computer vision}.
\newblock Cambridge university press, 2003.

\bibitem[Henk(2012)]{henk12-lowner}
Martin Henk.
\newblock l{\"o}wner-john ellipsoids.
\newblock \emph{Documenta Math}, 95:\penalty0 106, 2012.

\bibitem[Henrion and Lasserre(2005)]{henrion05-detecting}
Didier Henrion and Jean-Bernard Lasserre.
\newblock Detecting global optimality and extracting solutions in gloptipoly.
\newblock In \emph{Positive polynomials in control}, pages 293--310. Springer,
  2005.

\bibitem[Huber(2004)]{huber04book-robust}
Peter~J Huber.
\newblock \emph{Robust statistics}, volume 523.
\newblock John Wiley \& Sons, 2004.

\bibitem[Kojima and Yamashita(2013)]{kojima13mp-enclosing}
Masakazu Kojima and Makoto Yamashita.
\newblock Enclosing ellipsoids and elliptic cylinders of semialgebraic sets and
  their application to error bounds in polynomial optimization.
\newblock \emph{Mathematical Programming}, 138\penalty0 (1-2):\penalty0
  333--364, 2013.

\bibitem[Korkmaz et~al.(2014)Korkmaz, G{\"o}ks{\"u}l{\"u}k, and
  Zararsiz]{korkmaz14R-mvn}
Selcuk Korkmaz, Din{\c{c}}er G{\"o}ks{\"u}l{\"u}k, and G{\"O}KMEN Zararsiz.
\newblock Mvn: An r package for assessing multivariate normality.
\newblock \emph{R JOURNAL}, 6\penalty0 (2), 2014.

\bibitem[Kosut et~al.(1992)Kosut, Lau, and Boyd]{kosut92tac-set}
Robert~L Kosut, Ming~K Lau, and Stephen~P Boyd.
\newblock Set-membership identification of systems with parametric and
  nonparametric uncertainty.
\newblock \emph{IEEE Transactions on Automatic Control}, 37\penalty0
  (7):\penalty0 929--941, 1992.

\bibitem[Lasserre(2001)]{lasserre2001global}
Jean~B Lasserre.
\newblock Global optimization with polynomials and the problem of moments.
\newblock \emph{SIAM Journal on optimization}, 11\penalty0 (3):\penalty0
  796--817, 2001.

\bibitem[Lasserre(2015)]{lasserre15mp-generalization}
Jean~B Lasserre.
\newblock A generalization of l{\"o}wner-john's ellipsoid theorem.
\newblock \emph{Mathematical Programming}, 152:\penalty0 559--591, 2015.

\bibitem[Lasserre(2023)]{lasserre23crm-pell}
Jean~B Lasserre.
\newblock Pell's equation, sum-of-squares and equilibrium measures on a compact
  set.
\newblock \emph{Comptes Rendus. Math{\'e}matique}, 361\penalty0 (G5):\penalty0
  935--952, 2023.

\bibitem[Lasserre(2009)]{lasserre2009moments}
Jean~Bernard Lasserre.
\newblock \emph{Moments, positive polynomials and their applications},
  volume~1.
\newblock World Scientific, 2009.

\bibitem[Li et~al.(2023)Li, Yu, Conger, and Wierman]{li23arxiv-learning}
Yingying Li, Jing Yu, Lauren Conger, and Adam Wierman.
\newblock Learning the uncertainty sets for control dynamics via set
  membership: A non-asymptotic analysis.
\newblock \emph{arXiv preprint arXiv:2309.14648}, 2023.

\bibitem[Lofberg(2004)]{lofberg04yalmip}
Johan Lofberg.
\newblock Yalmip: A toolbox for modeling and optimization in matlab.
\newblock In \emph{2004 IEEE international conference on robotics and
  automation}, pages 284--289. IEEE, 2004.

\bibitem[Magnani et~al.(2005)Magnani, Lall, and Boyd]{magnani05cdc-tractable}
Alessandro Magnani, Sanjay Lall, and Stephen Boyd.
\newblock Tractable fitting with convex polynomials via sum-of-squares.
\newblock In \emph{Proceedings of the 44th IEEE Conference on Decision and
  Control}, pages 1672--1677. IEEE, 2005.

\bibitem[Milanese and Vicino(1991)]{milanese91automatica-optimal}
Mario Milanese and Antonio Vicino.
\newblock Optimal estimation theory for dynamic systems with set membership
  uncertainty: An overview.
\newblock \emph{Automatica}, 27\penalty0 (6):\penalty0 997--1009, 1991.

\bibitem[Moshtagh et~al.(2005)]{moshtagh05copt-minimum}
Nima Moshtagh et~al.
\newblock Minimum volume enclosing ellipsoid.
\newblock \emph{Convex optimization}, 111\penalty0 (January):\penalty0 1--9,
  2005.

\bibitem[Mustafa et~al.(2018)Mustafa, Stancu, Delanoue, and
  Codres]{mustafa18ras-guaranteed}
Mohamed Mustafa, Alexandru Stancu, Nicolas Delanoue, and Eduard Codres.
\newblock Guaranteed slam—an interval approach.
\newblock \emph{Robotics and Autonomous Systems}, 100:\penalty0 160--170, 2018.

\bibitem[Nie and Demmel(2005)]{nie05jgo-minimum}
Jiawang Nie and James~W Demmel.
\newblock Minimum ellipsoid bounds for solutions of polynomial systems via sum
  of squares.
\newblock \emph{Journal of Global Optimization}, 33\penalty0 (4):\penalty0
  511--525, 2005.

\bibitem[Nie and Schweighofer(2007)]{nie2007complexity}
Jiawang Nie and Markus Schweighofer.
\newblock On the complexity of putinar's positivstellensatz.
\newblock \emph{Journal of Complexity}, 23\penalty0 (1):\penalty0 135--150,
  2007.

\bibitem[Nocedal and Wright(1999)]{nocedal99book-numerical}
Jorge Nocedal and Stephen~J Wright.
\newblock \emph{Numerical optimization}.
\newblock Springer, 1999.

\bibitem[Oymak and Ozay(2019)]{oymak2019non}
Samet Oymak and Necmiye Ozay.
\newblock Non-asymptotic identification of lti systems from a single
  trajectory.
\newblock In \emph{2019 American control conference (ACC)}, pages 5655--5661.
  IEEE, 2019.

\bibitem[Parrilo(2003)]{parrilo03mp-semidefinite}
Pablo~A Parrilo.
\newblock Semidefinite programming relaxations for semialgebraic problems.
\newblock \emph{Mathematical programming}, 96:\penalty0 293--320, 2003.

\bibitem[Paulraj et~al.(2010)Paulraj, Sumathi,
  et~al.]{paulraj10mpe-comparative}
Sumathi Paulraj, P~Sumathi, et~al.
\newblock A comparative study of redundant constraints identification methods
  in linear programming problems.
\newblock \emph{Mathematical Problems in Engineering}, 2010, 2010.

\bibitem[Pavlakos et~al.(2017)Pavlakos, Zhou, Chan, Derpanis, and
  Daniilidis]{pavlakos17icra-heatmap}
Georgios Pavlakos, Xiaowei Zhou, Aaron Chan, Konstantinos~G Derpanis, and
  Kostas Daniilidis.
\newblock 6-dof object pose from semantic keypoints.
\newblock In \emph{IEEE Intl. Conf. on Robotics and Automation (ICRA)}, pages
  2011--2018. IEEE, 2017.

\bibitem[Pineda et~al.(2022)Pineda, Fan, Monge, Venkataraman, Sodhi, Chen,
  Ortiz, DeTone, Wang, Anderson, et~al.]{pineda22neurips-theseus}
Luis Pineda, Taosha Fan, Maurizio Monge, Shobha Venkataraman, Paloma Sodhi,
  Ricky~TQ Chen, Joseph Ortiz, Daniel DeTone, Austin Wang, Stuart Anderson,
  et~al.
\newblock Theseus: A library for differentiable nonlinear optimization.
\newblock \emph{Advances in Neural Information Processing Systems},
  35:\penalty0 3801--3818, 2022.

\bibitem[Putinar(1993)]{putinar1993positive}
Mihai Putinar.
\newblock Positive polynomials on compact semi-algebraic sets.
\newblock \emph{Indiana University Mathematics Journal}, 42\penalty0
  (3):\penalty0 969--984, 1993.

\bibitem[Sain et~al.(2016)Sain, Kadets, Paul, and Ray]{sain2016chebyshev}
Debmalya Sain, Vladimir Kadets, Kallol Paul, and Anubhab Ray.
\newblock Chebyshev centers that are not farthest points, 2016.

\bibitem[Sion(1958)]{sion1958general}
Maurice Sion.
\newblock On general minimax theorems.
\newblock 1958.

\bibitem[Stengel(1994)]{stengel94book-optimal}
Robert~F Stengel.
\newblock \emph{Optimal control and estimation}.
\newblock Courier Corporation, 1994.

\bibitem[Telgen(1983)]{telgen83ms-identifying}
Jan Telgen.
\newblock Identifying redundant constraints and implicit equalities in systems
  of linear constraints.
\newblock \emph{Management Science}, 29\penalty0 (10):\penalty0 1209--1222,
  1983.

\bibitem[Toh et~al.(1999)Toh, Todd, and T{\"u}t{\"u}nc{\"u}]{toh99oms-sdpt3}
Kim-Chuan Toh, Michael~J Todd, and Reha~H T{\"u}t{\"u}nc{\"u}.
\newblock Sdpt3—a matlab software package for semidefinite programming,
  version 1.3.
\newblock \emph{Optimization methods and software}, 11\penalty0 (1-4):\penalty0
  545--581, 1999.

\bibitem[Tsiamis and Pappas(2021)]{tsiamis2021linear}
Anastasios Tsiamis and George~J Pappas.
\newblock Linear systems can be hard to learn.
\newblock In \emph{2021 60th IEEE Conference on Decision and Control (CDC)},
  pages 2903--2910. IEEE, 2021.

\bibitem[Valero and Paulen(2019)]{valero2019ifac-recurparall}
Carlos~E Valero and Radoslav Paulen.
\newblock Effective recursive set-membership state estimation for robust linear
  mpc.
\newblock \emph{IFAC-PapersOnLine}, 52\penalty0 (1):\penalty0 486--491, 2019.

\bibitem[Vandenberghe et~al.(1998)Vandenberghe, Boyd, and
  Wu]{vandenberghe98simaa-determinant}
Lieven Vandenberghe, Stephen Boyd, and Shao-Po Wu.
\newblock Determinant maximization with linear matrix inequality constraints.
\newblock \emph{SIAM journal on matrix analysis and applications}, 19\penalty0
  (2):\penalty0 499--533, 1998.

\bibitem[Walter and Piet-Lahanier(1989)]{walter1989exact}
E~Walter and H{\'e}lene Piet-Lahanier.
\newblock Exact recursive polyhedral description of the feasible parameter set
  for bounded-error models.
\newblock \emph{IEEE Transactions on Automatic Control}, 34\penalty0
  (8):\penalty0 911--915, 1989.

\bibitem[Walter and Pronzato(1994)]{walter1994characterizing}
Eric Walter and Luc Pronzato.
\newblock Characterizing sets defined by inequalities.
\newblock \emph{IFAC Proceedings Volumes}, 27\penalty0 (8):\penalty0 325--336,
  1994.

\bibitem[{Xie, Miaolan}(2016)]{xie2016thesis}
{Xie, Miaolan}.
\newblock Inner approximation of convex cones via primal-dual ellipsoidal
  norms.
\newblock Master's thesis, 2016.
\newblock URL \url{http://hdl.handle.net/10012/10474}.

\bibitem[Yang and Carlone(2019)]{yang19iccv-quaternion}
Heng Yang and Luca Carlone.
\newblock A quaternion-based certifiably optimal solution to the wahba problem
  with outliers.
\newblock In \emph{Intl. Conf. on Computer Vision (ICCV)}, pages 1665--1674,
  2019.

\bibitem[Yang and Pavone(2023)]{yang2023object}
Heng Yang and Marco Pavone.
\newblock Object pose estimation with statistical guarantees: Conformal
  keypoint detection and geometric uncertainty propagation.
\newblock In \emph{Proceedings of the IEEE/CVF Conference on Computer Vision
  and Pattern Recognition}, pages 8947--8958, 2023.

\end{thebibliography}

\end{document}